\pdfoutput=1
\documentclass[11pt]{article}
\usepackage[margin=1.1in]{geometry}
\usepackage{amsmath,amssymb,amsthm,mathtools}
\usepackage{enumitem}
\usepackage[colorlinks=true,linkcolor=black,citecolor=black,urlcolor=black,filecolor=black]{hyperref}

\newtheorem{theorem}{Theorem}[section]
\newtheorem{lemma}[theorem]{Lemma}
\newtheorem{proposition}[theorem]{Proposition}
\newtheorem{corollary}[theorem]{Corollary}
\newtheorem{assumption}[theorem]{Assumption}
\theoremstyle{definition}
\newtheorem{definition}[theorem]{Definition}
\theoremstyle{remark}
\newtheorem{remark}[theorem]{Remark}

\DeclareMathOperator{\Tr}{Tr}
\DeclareMathOperator{\sinc}{sinc}
\newcommand{\R}{\mathbb{R}}

\newcommand{\F}{\mathcal{F}}
\newcommand{\eps}{\varepsilon}
\newcommand{\ind}[1]{\mathbf{1}_{#1}}
\newcommand{\HS}{\mathrm{HS}}
\newcommand{\logp}{\log_{2,+}}
\newcommand{\lnp}{\ln_{+}}
\newcommand{\Ltil}{\widetilde L}

\title{Tensor factorization and explicit spectral bounds\\
for product-box concentration operators}
\author{Ahmadreza Azimifard\thanks{Harmonic Research \& Technologies, LLC,
\href{mailto:afard@harmonicrt.com}{afard@harmonicrt.com}.}}
\date{July 2026}

\begin{document}
\maketitle

\begin{abstract}
Let $S=P_{cA_0}Q_{B_0}P_{cA_0}$ be the spatio-spectral concentration operator
of bounded sets $cA_0,B_0\subset\R^d$, and let
$\Lambda_\eps=\#\{n:\eps<\lambda_n(S)<1-\eps\}$ be its plunge count.

For $A_0$ and $B_0$ finite disjoint unions of bounded axis-parallel open
boxes we prove an explicit uniform upper bound on $\Lambda_\eps$: a single
estimate, with every constant written out in the side lengths, valid for
every $d\ge1$, every $c>0$ and every $0<\eps<\frac12$. On the range
$\alpha\ge4$, $c\ge2$, $\alpha^{-c}<\eps<\frac12$ it yields
$\Lambda_\eps\le C\,c^{d-1}\log\tfrac1\eps\,
\log\bigl(\alpha c/\log\tfrac1\eps\bigr)$. Kulikov and Dam Larsen
\cite[Thm.~1.3]{KDL} already prove that order on exactly that range, for a
class containing the pairs treated here. What the box class gains is the
all-parameter estimate itself, together with an independent proof.

The proof is organized around a telescoping tensorization of the
off-diagonal factor. A telescoping decomposition of $\ind{(cA_0)^c}$
writes the off-diagonal factor $P_{(cA_0)^c}Q_{B_0}P_{cA_0}$ as a sum of $d$
elementary tensor operators, each pairing \emph{one} copy of the
one-dimensional off-diagonal operator with $d-1$ localization factors.
Schatten quasi-norms multiply across tensor factors, and the single
logarithm comes only from the normal factor.

On the lower side, for the model cube pair, the plunge population is shown to
contain a $d$-fold product block once $\eps<4^{-d}$:
$\Lambda_\eps\ge M_a^{\,d}=\Omega\bigl((\log c)^d\bigr)$.
The ingredients are an exact trace identity, an explicit degree-three
polynomial minorant, and the sine-kernel determinant asymptotics of Basor and
Widom, quoted here from the literature; nothing is assumed about the
eigenvalue profile. The same machinery gives
$\Tr\bigl((S-S^2)^m\bigr)=\beta_m\pi^{-2}\log c+O_m(1)$ at every fixed order,
$\beta_m=B(m,m)$, together with a two-sided fixed-depth window estimate of
order $\log c$ that counts a tail window and its reflected head window.
Neither of these is uniform over its family, and the lower block is not a
matching bound. No statement of the paper is conditional on an unproved
hypothesis.
\end{abstract}

\section{Introduction}\label{sec:intro}

\subsection{The conjecture and the program}\label{sec:conj}

Fix the unitary Fourier transform
$\F f(\xi)=\int_{\R^d}f(x)e^{-2\pi ix\cdot\xi}\,dx$ on $L^2(\R^d)$. For
measurable $E\subset\R^d$ let $P_E$ be multiplication by $\ind E$ and
$Q_E=\F^{-1}P_E\F$. For bounded measurable $A_0,B_0\subset\R^d$ of positive
measure and a dilation parameter $c>0$, set $A=cA_0$, $B=B_0$ and
\[
S=S_{cA_0,B_0}=P_AQ_BP_A,\qquad
\Lambda_\eps=\Lambda_\eps(cA_0,B_0)=\#\{n:\ \eps<\lambda_n(S)<1-\eps\},
\quad 0<\eps<\tfrac12 .
\]
The precise counting convention, and its reconciliation with the half-open
convention used by Kulikov and Dam Larsen, are fixed once in
Definition~\ref{def:plunge} and Remark~\ref{rem:halfopen} below.
$S$ is a positive compact contraction (Lemma~\ref{lem:compact}); its
eigenvalues cluster at $1$ (about $|A||B|$ of them, the area law) and at
$0$, with a transition, the \emph{plunge region}, whose width is
governed by the boundary $\partial A$. Kulikov and Dam Larsen \cite{KDL}
proved the uniform bound
$\Lambda_\eps\lesssim c^{d-1}LR$, $L=\log(1/\eps)$, $R=\log(\alpha c/L)$,
when one of the sets is a finite union of parallelepipeds with disjoint
interiors and the other is bounded with finite upper Minkowski boundary
content \cite[Thm.~1.3]{KDL}, and $\Lambda_\eps\lesssim c^{d-1}LR^2$ for
general sets of finite upper Minkowski boundary content
\cite[Thm.~1.6]{KDL}; they conjectured (see the
discussion following \cite[Thm.~1.6]{KDL}, where they write that they believe
the extra logarithm ``does not actually take place'' but cannot remove it)
that the first form holds in general:
\begin{equation}\label{eq:conj}
\Lambda_\eps(cA_0,B_0)\ \le\ C(d,A_0,B_0,\alpha)\;c^{\,d-1}\,\log\frac1\eps\,
\log\Bigl(\frac{\alpha c}{\log(1/\eps)}\Bigr),
\qquad c\ge2,\ \alpha^{-c}<\eps<\tfrac12 .
\end{equation}
In the first paper of this program \cite{Afard1d} the first part was
carried out: an independent proof of \eqref{eq:conj} in $d=1$ (where
$c^{d-1}=1$ and the hypotheses force $A_0,B_0$ to be finite unions of
intervals), with explicit constants and with no restriction on $(c,\eps)$
beyond $\eps<\tfrac12$. The mechanism there was an exact
\emph{oscillation factorization} of the off-diagonal operator
$T=P_{A^c}Q_BP_A$ in boundary-distance coordinates, a scale-uniform
Hankel--Chebyshev estimate, and a boundary layer of width
$\asymp\log(1/\eps)$, assembled by Rotfel'd $p$-quasi-norm subadditivity
across the dyadic scales of a one-variable decomposition.

The present paper is the next step of the program: the \emph{product case}.
A single all-parameter estimate establishes \eqref{eq:conj} for finite
disjoint unions of axis-parallel boxes in every dimension. For the model cube
pair we prove, unconditionally, a complementary lower bound: a genuine
$d$-fold tensor block of plunge eigenvalues, of size $\Omega((\log c)^d)$,
together with an unconditional trace estimate in the same geometry. The lower
bound certifies that the plunge population of a box pair is not a
one-dimensional phenomenon lifted trivially. It does not match the upper
bound's order.

On the upper side, \eqref{eq:conj} in the box class is not new. A finite
disjoint union of bounded axis-parallel boxes is bounded, has boundary of
finite upper Minkowski content, and is a finite union of parallelepipeds with
disjoint interiors; so a pair $(A_0,B_0)$ satisfying
Assumption~\ref{ass:boxes} satisfies the hypotheses of
\cite[Thm.~1.3]{KDL}, whose conclusion is \eqref{eq:conj} on exactly the
range stated there. What Theorem~\ref{thm:upper} supplies that
\cite[Thm.~1.3]{KDL} does not is an estimate valid at \emph{every} $c>0$ and
\emph{every} $\eps\in(0,\frac12)$. Its constants are written out in the side
lengths rather than left implicit, and its proof is independent.
Section~\ref{sec:priorwork} compares the two statement by statement.

\begin{assumption}\label{ass:boxes}
$A_0=\bigsqcup_{i=1}^{M}R^{(i)}$ and $B_0=\bigsqcup_{j=1}^{K}Q^{(j)}$ are
finite disjoint unions of bounded axis-parallel open boxes,
\[
R^{(i)}=\prod_{m=1}^d I^{(i)}_m,\qquad
Q^{(j)}=\prod_{m=1}^d B^{(j)}_m,
\]
with $|I^{(i)}_m|=a^{(i)}_m>0$ and $|B^{(j)}_m|=b^{(j)}_m>0$.
\end{assumption}

\subsection{Main results}\label{sec:main}

Throughout, $\lnp x=\max(0,\ln x)$ and $\logp x=\max(0,\log_2x)$, and for
$0<\eps<\frac12$,
\begin{equation}\label{eq:Ltil}
\Ltil=\ln\frac{1}{\eps(1-\eps)}\in(\ln4,\infty),\qquad
L=\ln\frac1\eps,\qquad
p=\frac1\Ltil\in(0,1).
\end{equation}
The two logarithmic parameters $L$ and $\Ltil$ are distinct and both
load-bearing; they are compared in \eqref{eq:Lchain}, and the logarithmic
factor in $c$ is fixed in Section~\ref{sec:logparams}. For $\ell,b>0$ define
the \emph{normal profile function}
\begin{equation}\label{eq:G}
G(\ell,b)\;=\;\pi e\,b+12.5+4.5\,\logp\!\bigl(\ell/\Ltil\bigr),
\end{equation}
Despite the notation, $G$ depends on $\eps$ as well as on $(\ell,b)$,
through $\Ltil$; the two displayed arguments are the ones that vary from
coordinate to coordinate, whereas $\eps$ is fixed throughout any single
application. The same is true of $H$ below.
Abbreviate, for a component pair $(i,j)$ and a coordinate $m$,
\begin{equation}\label{eq:GH}
\ell^{(i)}_m=c\,a^{(i)}_m,\qquad
G^{(ij)}_m=G\bigl(\ell^{(i)}_m,\,b^{(j)}_m\bigr),\qquad
H^{(ij)}_m=2\,\ell^{(i)}_m b^{(j)}_m+2\sqrt2\,\Ltil\,G^{(ij)}_m .
\end{equation}

The upper bound comes first, in the uniform form from which everything else
is deduced.

\begin{theorem}[Product case, uniform form]\label{thm:upper}
Under Assumption~\ref{ass:boxes}, for every $d\ge1$, every $c>0$ and every
$0<\eps<\frac12$,
\[
\Lambda_\eps(cA_0,B_0)\;\le\;
2e^{1/2}\sum_{i=1}^{M}\sum_{j=1}^{K}\sum_{k=1}^{d}
\Ltil\,G^{(ij)}_k\prod_{m\neq k}H^{(ij)}_m .
\]
\end{theorem}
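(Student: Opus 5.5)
The plan is to reduce the plunge count to a Schatten quasi-norm of the off-diagonal factor, and then to tensorize that factor across coordinates.

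\emph{Reduction to the off-diagonal factor.} With $A=cA_0$ and $T=P_{A^c}Q_BP_A$, we have $T^*T=P_AQ_BP_{A^c}Q_BP_A$. When $B=B_0$ is a single box this equals $S-S^2$ on $L^2(A)$. Each plunge eigenvalue satisfies $\lambda(1-\lambda)>\eps(1-\eps)$, so Chebyshev's inequality with exponent $p/2$ gives
\[
\Lambda_\eps\le \bigl(\eps(1-\eps)\bigr)^{-p/2}\,\|T\|_{S_p}^p = e^{1/2}\,\|T\|_{S_p}^p,
\qquad p=1/\Ltil ,
\]
because $(\eps(1-\eps))^{-p/2}=e^{\Ltil p/2}=e^{1/2}$. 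This is where the factor $e^{1/2}$ comes from. The extra factor $2$ and the double sum over $(i,j)$ must come from a splitting step. I would write $Q_{B_0}=\sum_jQ_{Q^{(j)}}$ and $P_{A}=\sum_iP_{cR^{(i)}}$, and use Ky Fan/Weyl-type counting, for instance $\Lambda_\eps(S)\le\sum$ of counts for pieces after a rank argument. Alternatively, and I think more cleanly, I would bound $S-S^2\le$ a sum of positive pieces $T_{ij}^*T_{ij}$ and use the Rotfel'd subadditivity $\|\sum X\|_{S_{p/2}}^{p/2}\le\sum\|X\|_{S_{p/2}}^{p/2}$ for $p/2<1$. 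The constant $2$ would absorb the step from the contraction $S$ to $S-S^2$ when $B$ is a union rather than a single box, since $Q_B$ is still a projection. With several $A$-components, the cross terms $P_{cR^{(i)}}Q P_{cR^{(i')}}$ are handled by putting $P_{(cA_0)^c}$ on the outside relative to each $cR^{(i)}$ separately, i.e.\ by using $P_{(cR^{(i)})^c}\ge P_{A^c}$.

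\emph{Telescoping tensorization.} For one pair, $R=\prod I_m$ scaled by $c$ and $Q=\prod B_m$, the complement has indicator $1-\prod_m\ind{cI_m}=\sum_{k=1}^d\bigl(\prod_{m<k}\ind{cI_m}\bigr)\ind{(cI_k)^c}\bigl(\prod_{m>k}1\bigr)$. Since $Q_Q=\bigotimes_m Q_{B_m}$, this gives
\[
T=\sum_{k=1}^d\Bigl(\bigotimes_{m<k}P_{cI_m}Q_{B_m}P_{cI_m}\Bigr)\otimes T^{(1)}_k\otimes\Bigl(\bigotimes_{m>k}Q_{B_m}P_{cI_m}\Bigr),
\]
where $T^{(1)}_k=P_{(cI_k)^c}Q_{B_k}P_{cI_k}$ is the one-dimensional off-diagonal operator. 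Rotfel'd subadditivity over $k$, together with multiplicativity of $\|\cdot\|_{S_p}^p$ under tensor products, reduces the problem to two one-dimensional estimates:
\[
\|T^{(1)}_k\|_{S_p}^p\le \Ltil\,G_k
\qquad\text{and}\qquad
\|Q_{B_m}P_{cI_m}\|_{S_p}^p\le H_m .
\]
The second factor dominates the first, which is a compression of it.

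\emph{One-dimensional inputs.} The normal bound is the main theorem of \cite{Afard1d}, in quasi-norm form: oscillation factorization, the Hankel--Chebyshev estimate, and the dyadic layers, which produce $\pi e b+12.5+4.5\logp(\ell/\Ltil)$ per endpoint scale. For the localization factor I would split the singular values of $Q_{B_m}P_{cI_m}$ (these are $\sqrt{\lambda_n}$ of the one-dimensional $S$) into three groups:
\begin{enumerate}[label=(\roman*)]
\item the roughly $\ell b$ eigenvalues near $1$ (the area law), counted with a factor $2$ via $\Tr S=\ell b$ and $\#\{\lambda>1/2\}\le2\Tr S$;
\item the plunge and tail part, where $\sum_n\lambda_n^{p/2}$ over $\lambda_n<1/2$ is bounded by comparing $\lambda_n\le2\lambda_n(1-\lambda_n)$ with the one-dimensional $S_{p}$ bound for $T^{(1)}$; this gives the $2\sqrt2\,\Ltil G$ term, the $\sqrt2$ arising from $(2)^{p/2}\le\sqrt2$;
\item any remaining contributions, which are absorbed into the two groups above.
\end{enumerate}

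The main obstacle is the bookkeeping in the first step with several components, namely obtaining exactly $2e^{1/2}$ times a plain sum over $(i,j)$ rather than powers of $M$ and $K$. I would first try to reduce to disjoint pieces via $P_A=\bigoplus_iP_{cR^{(i)}}$ and a block-diagonal comparison, and only then expand in $j$ using subadditivity.
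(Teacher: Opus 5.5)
Your architecture matches the paper's: Markov at $p=1/\Ltil$ on $T=P_{A^c}Q_BP_A$ (giving $e^{1/2}$), reduction to single box pairs, the $d$-term telescoping of $\ind{R^c}$ into elementary tensors, multiplicativity of $\|\cdot\|_p^p$, the one-dimensional off-diagonal bound in the normal slot, and the split at $\frac12$ in the tangential slots. But the step you flag as the obstacle is handled incorrectly.

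The operator inequality $S-S^2\le\sum_{i,j}T_{ij}^*T_{ij}$ is false in general. $T^*T$ contains cross terms $T_{ij}^*T_{i'j'}$ that do not vanish: the $P_{cR^{(i)}}$ have disjoint supports and the $Q_{Q^{(j)}}$ have orthogonal ranges, but $P_{A^c}$ sits between them. The only general operator bound, Cauchy--Schwarz, gives $T^*T\le MK\sum_{i,j}T_{ij}^*T_{ij}$, which brings back the powers of $MK$ you wanted to avoid. No comparison at the level of $T^*T$ is needed. Write $T=\sum_{i,j}P_{A^c}Q_{Q^{(j)}}P_{cR^{(i)}}$ and apply Rotfel'd subadditivity to $T$ itself at exponent $p$. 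Then $P_{A^c}=P_{A^c}P_{(cR^{(i)})^c}$, together with $s_n(UXV)\le\|U\|\|V\|s_n(X)$, replaces each piece by $P_{(cR^{(i)})^c}Q_{Q^{(j)}}P_{cR^{(i)}}$, and a modulation centres $Q^{(j)}$. This gives $\|T\|_p^p\le\sum_{i,j}\|T^{(ij)}\|_p^p$ with constant exactly $1$. Note also that $T^*T=S-S^2$ holds for every measurable $B$, because $Q_B$ is a projection. So there is no price for passing from $S$ to $S-S^2$ when $B$ is a union.

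Hence the factor $2$ cannot come from the splitting, and your constants do not add up. The normal estimate you ask for, $\|T^{(1)}_k\|_p^p\le\Ltil G_k$, is twice as strong as the available one-dimensional input. That input is $\|P_{I^c}Q_{B^\circ}P_I\|_p^p\le\frac2p\,G=2\Ltil G$, where the $2$ counts the two endpoints of the interval; your own ``per endpoint'' description says as much. Your tangential item (ii) in fact already uses the $2\Ltil G$ form, since $2\sqrt2\,\Ltil G=\sqrt2\cdot2\Ltil G$ comes from $2^{p/2}\le\sqrt2$ times that bound.

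So the accounting fails either way. A factor $2$ from splitting plus $2\Ltil G_k$ in the normal slot would give $4e^{1/2}$, not the stated constant. Keeping $\Ltil G_k$ in the normal slot instead relies on an unproved bound. The correct accounting is:
\begin{itemize}
\item $e^{1/2}$ from Markov;
\item nothing from Rotfel'd over $(i,j)$ or over $k$;
\item $2\Ltil G_k$ from the normal slot;
\item $H_m$ from each tangential slot, since both $\|S_m\|_p^p$ and $\|Q_{B_m}P_{(0,\ell_m)}\|_p^p$ are at most $\sum_n\lambda_n(S_m)^{p/2}\le H_m$.
\end{itemize}
Together these give
\[
\Lambda_\eps\le e^{1/2}\sum_{i,j}\sum_{k=1}^d 2\Ltil\,G^{(ij)}_k\prod_{m\neq k}H^{(ij)}_m .
\]
With these two corrections your proposal is the paper's proof.
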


\begin{corollary}[an explicit KDL-form bound for product-box
unions]\label{cor:kdl}
Under Assumption~\ref{ass:boxes}, for every $\alpha\ge4$, $c\ge2$ and
$\alpha^{-c}<\eps<\frac12$,
\[
\Lambda_\eps(cA_0,B_0)\;\le\;C(d,A_0,B_0,\alpha)\;
c^{\,d-1}\,\ln\frac1\eps\,\ln\Bigl(\frac{\alpha c}{\ln(1/\eps)}\Bigr),
\]
with the explicit constant
\[
C=\sum_{i,j}\sum_{k=1}^d\gamma^{(ij)}_k\prod_{m\neq k}\kappa^{(ij)}_m,
\quad
\begin{aligned}
\kappa^{(ij)}_m&=2a^{(i)}_mb^{(j)}_m
+6\ln\alpha\,\bigl(\pi e\,b^{(j)}_m+12.5\bigr)+7\,a^{(i)}_m,\\
\gamma^{(ij)}_k&=6.3\,\bigl(\pi e\,b^{(j)}_k+13\bigr)
+41\,\bigl(1+\lnp a^{(i)}_k\bigr).
\end{aligned}
\]
\end{corollary}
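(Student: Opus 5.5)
The corollary follows from Theorem~\ref{thm:upper} by bounding each factor on the restricted range $\alpha\ge4$, $c\ge2$, $\alpha^{-c}<\eps<\frac12$. No new operator estimate is needed. The plan has four steps.

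\textbf{Step 1: relate $\Ltil$ and $L$.} Since $1-\eps>\frac12$, we have $L<\Ltil\le L+\ln2$. Because $\eps<\frac12$ gives $L>\ln2$, this yields $\Ltil\le 2L$. The range condition $\eps>\alpha^{-c}$ gives $L<c\ln\alpha$.

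\textbf{Step 2: bound the tangential factors.} For $m\neq k$ we want $H^{(ij)}_m\le\kappa^{(ij)}_m\,c$. The term $2\ell^{(i)}_mb^{(j)}_m=2a^{(i)}_mb^{(j)}_m\,c$ is already of the required form. For the other term, use $\Ltil\le L+\ln2<c\ln\alpha+\ln 2$, which is at most a constant multiple of $c\ln\alpha$ since $c\ge2$ and $\alpha\ge4$. Multiplying by $2\sqrt2$ and by the constant part $\pi e b+12.5$ of $G$ should produce the $6\ln\alpha(\pi e b+12.5)$ coefficient after absorbing $2\sqrt2(1+\ln2/(2\ln4))$. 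The remaining piece is $2\sqrt2\,\Ltil\cdot4.5\logp(\ell/\Ltil)$. Here use $x\logp(y/x)\le y/(e\ln2)$, maximized over $x$. This bounds the piece by $2\sqrt2\cdot4.5\,ca/(e\ln2)\approx 6.75\,ca$, which accounts for the $7a^{(i)}_m$ term. Hence $\prod_{m\neq k}H_m\le c^{d-1}\prod_{m\neq k}\kappa_m$.

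\textbf{Step 3: bound the normal factor.} The main work is to show
\[
2e^{1/2}\,\Ltil\,G^{(ij)}_k\le\gamma^{(ij)}_k\,L\,\ln(\alpha c/L).
\]
Since $\alpha c/L>1$ by Step~1 and $\alpha\ge4$, we have $\ln(\alpha c/L)\ge\ln 4$ after checking (because $L<c\ln\alpha\le c\alpha/4$ fails in general, this needs care). I would first try $\alpha c/L>\alpha/\ln\alpha\ge 4/\ln4$, whose logarithm is about $1.06$. Then the constant part gives $2e^{1/2}\cdot2L(\pi e b+12.5)\le 6.3(\pi e b+13)L\ln(\alpha c/L)$ after absorbing $1/1.06$ and the slack $12.5\to13$. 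For the logarithmic part, write
\[
\logp(\ell/\Ltil)\le\frac{1}{\ln2}\Bigl(\lnp a+\ln(c/L)+\ldots\Bigr),
\]
and bound $\ln(c/L)\le\ln(\alpha c/L)$. The key point is that $\ln(ca/\Ltil)\le\lnp a+\ln(\alpha c/L)$ because $\Ltil\ge L$ and $\alpha\ge1$. Then $2e^{1/2}\cdot2\cdot4.5/\ln2\approx 41.8$ times $L(1+\lnp a)\ln(\alpha c/L)$, where the ``$1+$'' together with the lower bound $\ln(\alpha c/L)\ge 1.06$ absorbs $\lnp a$, should give the constant $41$. Some loss may be needed here, possibly by using $\Ltil\le L+\ln2$ more sharply than the factor $2$.

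\textbf{Step 4: assemble.} Summing over $i,j,k$ gives the stated $C$. The main obstacle is the numerical bookkeeping in Step~3. The claimed constants $6.3$ and $41$ are tight against the crude inequalities $\Ltil\le2L$ and $\ln(\alpha c/L)\ge\ln(\alpha/\ln\alpha)$. I would first try splitting the argument according to whether $L\ge1$ or $L<1$, which sharpens $\Ltil/L$ to $1+\ln2$ when $L\ge1$. If the constants still fail, I would revisit the lower bound on $\ln(\alpha c/L)$.
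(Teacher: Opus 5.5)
Your plan is the paper's proof. It uses the same chain $L\le\Ltil\le L+\ln2\le2L<2c\ln\alpha$, the same bound $H_m\le\kappa_m c$ via maximizing $x\lnp(K/x)$, and the same lower bound $X=\ln(\alpha c/L)>\eta:=\ln(4/\ln4)\approx1.0596$. Your first guess $X\ge\ln4$ is indeed false near $\eps=\alpha^{-c}$, as you suspected. The same step $\lnp(ca_k/\Ltil)\le X+\lnp a_k\le X(1+\lnp a_k/\eta)$ also appears in the paper.

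The only thing left open is the constant check in Step~3, and there your allocation does not close as written. The factor is $2e^{1/2}\cdot2\cdot4.5/\ln2\approx42.8$, not $41.8$. So the coefficient of $LX$ coming from the logarithmic part of $G_k$ (the ``1'' in $1+\lnp a_k$) exceeds $41$ on its own. The $\lnp a_k$ coefficient, $42.8/\eta\approx40.4$, is fine.

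No split into $L\ge1$ versus $L<1$ is needed, and no sharper use of $\Ltil\le L+\ln2$. You spent the slack $12.5\to13$ on the constant part, where it is not needed: $2e^{1/2}\cdot2/\eta\approx6.22\le6.3$ already handles $\pi e\,b_k+12.5$. That unused slack, about $6.3\cdot13-6.22\cdot12.5\approx4.1$, is exactly what pays for the excess $42.8-41\approx1.8$.

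The paper pools the two constant contributions, using $2e^{1/2}\le3.3$ and $4.5/\ln2\le6.5$:
\[
\frac{6.6\cdot12.5}{\eta}+6.6\cdot6.5\approx120.8\ \le\ 6.3\cdot13+41=122.9 .
\]
It checks separately $6.6/\eta\le6.3$ for the $\pi e\,b_k$ term and $6.6\cdot6.5/\eta\approx40.5\le41$ for the $\lnp a_k$ term. With that one reallocation, your argument proves the corollary with the stated constants.
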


\noindent
The order in Corollary~\ref{cor:kdl} is not new, and its proof here is
independent. Under Assumption~\ref{ass:boxes} the pair $(A_0,B_0)$ falls
inside the geometric class of \cite[Thm.~1.3]{KDL}, in which $A$ may be any
bounded set with boundary of finite upper Minkowski content while $B$ is a
finite union of parallelepipeds with disjoint interiors; and
\cite[Thm.~1.3]{KDL} delivers the same $c^{\,d-1}LR$ order on the same range
$c\ge2$, $\alpha^{-c}<\eps<\frac12$. Corollary~\ref{cor:kdl} is therefore a
different derivation of a bound already available, obtained from the
product-box tensor factorization of Section~\ref{sec:mech} and carrying a
constant $C$ written out in the side lengths. It is stated here because
Theorem~\ref{thm:upper}, from which it follows in three lines, is
not available from \cite{KDL}: that theorem holds at every $c>0$
and every $\eps\in(0,\frac12)$, with no threshold in either variable and no
existentially quantified $\alpha(d,A_0,B_0)$.

Theorem~\ref{thm:upper} is uniform over \emph{all} $\eps\in(0,\frac12)$ and all
$c>0$; for $\eps$ below $\alpha^{-c}$ it degrades gracefully (each factor
$H_m$ becomes $\asymp\Ltil$, each $G_k$ becomes $O(1+b)$, so
$\Lambda_\eps\lesssim\Ltil^{\,d}$, the natural product-regime shape for the
super-exponentially small eigenvalues of a $d$-dimensional product
operator). That is the correct \emph{shape} but it is not the last word
there. For $\eps\le\alpha^{-c}$, \cite[Thm.~1.3]{KDL} determines the
\emph{lower-half} count exactly,
$\Lambda^-_\eps\asymp\bigl(\log\frac1\eps\,/\log(\log\frac1\eps\,/\,c)
\bigr)^d$, which is smaller than $\Ltil^{\,d}$ by the factor
$\bigl(\log(\log\frac1\eps\,/\,c)\bigr)^d$; and since no eigenvalue exceeds
$1-\eps$ in that regime, they thereby determine the whole count as well, as
$\Lambda^-_\eps$ plus the $\asymp c^{\,d}$ eigenvalues above $\frac12$.
Our estimate is not an improvement on \cite{KDL} there.
For $d=1$ the empty products make Theorem~\ref{thm:upper} reduce to
the (sharper, intermediate) per-component form of the main theorem of
\cite{Afard1d}, and hence imply it.

The lower bounds are stated for the model box pair $A_0=B_0=(0,1)^d$ with
$c=\ell$, the case in which the operator is an exact $d$-fold tensor power
and the counting question reduces to counting products of the
one-dimensional eigenvalues $\lambda_n(\ell)$ of Section~\ref{sec:1d}. Write
\begin{equation}\label{eq:Ma}
M_a:=\#\{n:\lambda_n(\ell)\in(\tfrac14,\tfrac34)\}
\end{equation}
for the one-dimensional window count.

\begin{theorem}[An unconditional logarithmic tensor
block]\label{thm:lower}
Let $A_0=B_0=(0,1)^d$, $d\ge2$, and $c=\ell$. Then for every $\eps<4^{-d}$,
\[
\Lambda_\eps(c)\ \ge\ M_a^{\,d}\ \ge\
\bigl((c_0\ln\ell-C_0)_+\bigr)^d
\ =\ \Omega\!\bigl((\log c)^d\bigr),
\qquad c_0=\frac{8}{15\pi^2}=0.05403\ldots
\]
No hypothesis on the eigenvalue profile is used. The window count $M_a$ is
obtained in Section~\ref{sec:lower} from an explicit polynomial moment
inequality (Proposition~\ref{prop:count}) evaluated against the second and
third traces of $\varphi=S-S^2$. Of these the exact trace identity of
Proposition~\ref{prop:trace} supplies the first only; the two the bound
actually consumes come from Corollary~\ref{cor:windowexact} of
Appendix~\ref{app:exact}, and hence from the determinant asymptotics of
Theorem~\ref{thm:basorwidom}. The coefficient $c_0$ is exact and explicit: it
is the coefficient delivered by that displayed degree-three minorant, and it
is the largest available from any minorant of degree at most three
(Lemma~\ref{lem:degree3}). It is \emph{not} the all-degree ceiling, which is
$2\ln3/\pi^2$ (Remark~\ref{rem:c0status}). The remainder constant $C_0$ is
finite but not effective; see the note following Theorem~\ref{thm:window}.
\end{theorem}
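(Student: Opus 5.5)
The proof splits into a combinatorial inclusion and a one-dimensional counting estimate. For the model pair $A_0=B_0=(0,1)^d$ with $c=\ell$, the projections factor across coordinates: $P_A=\bigotimes_{m}P_{(0,\ell)}$ and $Q_B=\bigotimes_m Q_{(0,1)}$. Hence $S=S_1^{\otimes d}$ with $S_1=P_{(0,\ell)}Q_{(0,1)}P_{(0,\ell)}$ on $L^2(\R)$. The spectrum of $S$ (with multiplicity) is therefore the multiset of products $\lambda_{n_1}(\ell)\cdots\lambda_{n_d}(\ell)$, with eigenvectors the tensor products of the one-dimensional eigenfunctions. This is the standard spectral theorem for tensor products of compact self-adjoint operators, and compactness and positivity come from Lemma~\ref{lem:compact}.

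For the first inequality, restrict to multi-indices with every $\lambda_{n_m}(\ell)\in(\tfrac14,\tfrac34)$. There are exactly $M_a^{\,d}$ such indices, counted with multiplicity. For each of them the product satisfies
\[
4^{-d}<\prod_m\lambda_{n_m}(\ell)<(3/4)^d\le 9/16 .
\]
Since $d\ge2$ and $\eps<4^{-d}\le 1/16$, we have $\eps<\prod_m\lambda_{n_m}$ and $\prod_m\lambda_{n_m}<1-\eps$. So all $M_a^{\,d}$ of these eigenvalues lie in the plunge window of Definition~\ref{def:plunge}, which gives $\Lambda_\eps(c)\ge M_a^{\,d}$. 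This step is routine.

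The real work is the one-dimensional bound $M_a\ge (c_0\ln\ell-C_0)_+$. The plan follows the route announced in the statement. Set $\varphi=S_1-S_1^2$, so each eigenvalue contributes $t=\lambda(1-\lambda)\in[0,\tfrac14]$, and $\lambda\in(\tfrac14,\tfrac34)$ exactly when $t>\tfrac3{16}$.

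First, build a cubic polynomial $q(t)=\alpha t^2+\beta t^3$ with $q(t)\le \ind{(3/16,1/4]}(t)$ on $[0,\tfrac14]$. It has no constant or linear term, so that $\sum q(t_n)$ is controlled by $\Tr\varphi^2$ and $\Tr\varphi^3$ alone and the $\Tr\varphi$ term, which has a different constant in front of $\log\ell$, does not enter. Summing over the spectrum gives
\[
M_a\ \ge\ \alpha\,\Tr(\varphi^2)+\beta\,\Tr(\varphi^3);
\]
this is Proposition~\ref{prop:count}.

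Next, insert the asymptotics $\Tr(\varphi^m)=\beta_m\pi^{-2}\ln\ell+O_m(1)$ for $m=2,3$, with $\beta_2=B(2,2)=1/6$ and $\beta_3=B(3,3)=1/30$. These come from Corollary~\ref{cor:windowexact}, i.e. from the Basor--Widom determinant asymptotics of Theorem~\ref{thm:basorwidom}. The leading coefficient is then $(\alpha/6+\beta/30)/\pi^2$, and optimizing over admissible cubic minorants should produce $8/15$, which gives $c_0=8/(15\pi^2)$. This optimization is Lemma~\ref{lem:degree3}, though only the existence of the displayed minorant is needed here. The $O(1)$ remainders combine into $C_0$, which is non-effective because the Basor--Widom remainder is. Taking positive parts and raising to the $d$-th power completes the proof.

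I expect the main obstacle to be the cubic minorant. A quadratic-plus-cubic polynomial must stay below $0$ on $[0,\tfrac3{16}]$ and below $1$ on $(\tfrac3{16},\tfrac14]$, while keeping $\alpha/6+\beta/30$ positive and large. Because the minorant is negative on most of $[0,\tfrac3{16}]$, the sign bookkeeping has to be checked by hand. The likely choice is $q(t)=Kt^2(t-\tfrac3{16})$, normalized so that $q(\tfrac14)=1$. That gives $K=1024$, $\alpha=-192$, $\beta=1024$, and the coefficient $-32+1024/30=32/30=16/15$. This is twice $8/15$, so the inputs need rechecking, for instance whether the traces count a $\log$ with a factor of $\tfrac12$, or whether the window normalization differs. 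I would reconcile this against the exact constants in Corollary~\ref{cor:windowexact} before fixing $c_0$.
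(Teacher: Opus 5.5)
Your route is the paper's: $S=S_\ell^{\otimes d}$, the $M_a^{\,d}$ multi-indices whose products lie in $(4^{-d},(3/4)^d)\subset(\eps,1-\eps)$, and a one-dimensional count from a cubic minorant evaluated against $\Tr(\varphi^2)$ and $\Tr(\varphi^3)$. The block half is correct as written. The step that fixes $c_0$ does not close. You end with an unexplained factor of two and propose to look for it in the trace inputs or the window convention. In fact it comes from your own normalization, and the polynomial you wrote down is not a minorant.

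For $q(t)=Kt^2(t-\frac3{16})$ one has $q(\frac14)=K\cdot\frac1{16}\cdot\frac1{16}=K/256$. So $q(\frac14)=1$ forces $K=256$, i.e.\ $q(t)=256t^3-48t^2$, which is exactly $\varrho$ of Proposition~\ref{prop:count}. Then $\alpha/6+\beta/30=-8+\frac{128}{15}=\frac8{15}$, and $c_0=\frac8{15\pi^2}$ follows with $\beta_2=\frac16$, $\beta_3=\frac1{30}$ taken at face value; there is no missing factor $\frac12$.

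With your $K=1024$ you get $q(\frac14)=4>1$. Then $q\le\ind{(3/16,1/4]}$ fails near $t=\frac14$, and the inequality $M_a\ge\alpha\Tr(\varphi^2)+\beta\Tr(\varphi^3)$ is no longer justified. Even granting $K=1024$, $-32+\frac{1024}{30}=\frac{64}{30}=\frac{32}{15}$, not $\frac{16}{15}$. That is four times $\frac8{15}$, exactly what a fourfold overscaling predicts. Lemma~\ref{lem:degree3} is the quick diagnostic: no admissible cubic gives more than $\frac8{15}$, so a larger value signals an inadmissible polynomial, not a defect in the inputs.

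With $K=256$, check $\varrho'(w)=96w(8w-1)>0$ on $(\frac3{16},\frac14]$. This gives $0\le\varrho\le1$ there, and $\varrho\le0$ on $[0,\frac3{16}]$, so your argument goes through. Your way of producing $C_0$ is a valid alternative to the paper's use of $\mathsf{B}>0$ in Corollary~\ref{cor:windowexact}. You absorb the $O_m(1)$ remainders of Proposition~\ref{lem:higher}, which are uniform in $\ell\ge1$, and take the positive part for $\ell<1$. Both routes leave $c_0$ intact and $C_0$ non-effective.
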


\noindent
What Theorem~\ref{thm:lower} delivers is a genuine
\emph{tensor block}: $M_a^{\,d}$ distinct eigenvalue slots of the
$d$-dimensional operator, each an honest product of $d$ one-dimensional
plunge eigenvalues, all of them strictly inside $(\eps,1-\eps)$. The same
window count also yields, with no further input, an unconditional lower
bound on the Schatten mass $\sum_n\lambda_n^{\,q}$ for $0<q\le\frac12$
(Theorem~\ref{thm:nq}).

Theorem~\ref{thm:lower} is \emph{not} a matching lower bound. The upper bound
of Corollary~\ref{cor:kdl} is of order $c^{\,d-1}LR$ and
Theorem~\ref{thm:lower} of order $(\log c)^d$; the two differ by a power of
$c$. Closing that gap would require one-dimensional lower-half density
estimates in a depth range the fixed-window argument used here does not
reach. Estimates of that kind are known for cubes in a deep polynomial
regime: \cite[Thm.~1.1]{KDL} bounds $\Lambda^-_\eps$ from below by
$c^{\,d-1}\log\frac1\eps\log(\alpha_dc/\log\frac1\eps)$ for
$\alpha_d^{-c}<\eps<c^{-\alpha_d}$. What is missing is a bound of that shape
holding uniformly for $\eps$ moving between fixed depth and that regime;
obtaining one by the trace method used here is a separate problem
(Section~\ref{sec:stops}), and no statement of this paper depends on it.

\begin{remark}[the two pairs of logarithmic parameters]\label{rem:matched}
Two pairs of logarithmic parameters are in circulation for this problem, and
they are converted here rather than identified silently. The upper bound is
produced by its proof in $L=\ln(1/\eps)$ with the natural-logarithm factor
$X=\ln(\alpha c/L)$; lower-bound statements are customarily written in
$\Ltil$ with the base-two factor $\log_2(c/\Ltil)$.
By \eqref{eq:Lchain} the first pair differs by at most a factor $2$, whereas
the second differs \emph{additively} and admits no uniform multiplicative
comparison; Section~\ref{sec:logparams} carries out both conversions and
records why the second one fails. The next corollary spends the conversion at
$\alpha=4$, with an explicit factor, so that displays in the two parameter
sets can be read against each other directly.
\end{remark}

\begin{corollary}[matched form]\label{cor:matched}
Under Assumption~\ref{ass:boxes}, for every $c\ge2$ and every
$4^{-c}<\eps<\frac12$,
\[
\Lambda_\eps(cA_0,B_0)\;\le\;C'\;c^{\,d-1}\;\Ltil\;\log_2\frac{4c}{\Ltil},
\qquad C'=2.01\,C\bigr|_{\alpha=4},
\]
with $C$ as in Corollary~\ref{cor:kdl}.
\end{corollary}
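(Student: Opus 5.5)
The plan is to deduce the statement directly from Corollary~\ref{cor:kdl} at $\alpha=4$, then convert the parameter pair $(L,\ln(4c/L))$ into $(\Ltil,\log_2(4c/\Ltil))$ with an explicit multiplicative factor. No new operator estimate is needed.

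\textbf{Step 1: apply Corollary~\ref{cor:kdl}.} The hypotheses $c\ge2$ and $4^{-c}<\eps<\frac12$ are exactly those of Corollary~\ref{cor:kdl} with $\alpha=4$. It therefore gives
\[
\Lambda_\eps\le C|_{\alpha=4}\,c^{d-1}\,L\ln(4c/L).
\]
It remains to show that $L\ln(4c/L)\le 2.01\,\Ltil\log_2(4c/\Ltil)$ on this range.

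\textbf{Step 2: compare $L$ with $\Ltil$.} From \eqref{eq:Ltil},
\[
\Ltil=L+\ln\frac{1}{1-\eps}\in[L,\,L+\ln2],
\]
so $L\le\Ltil$. Also $L\ge\ln 2$, which gives $\Ltil\le2L$ (this is the content of \eqref{eq:Lchain}). Consequently
\[
\log_2\frac{4c}{L}=\log_2\frac{4c}{\Ltil}+\log_2\frac{\Ltil}{L}\le\log_2\frac{4c}{\Ltil}+1 .
\]
This is the additive discrepancy that Remark~\ref{rem:matched} warns about.

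\textbf{Step 3: absorb the additive $1$.} This is the main obstacle. The additive $1$ can only be absorbed multiplicatively if $\log_2(4c/\Ltil)$ is bounded below by a positive constant on the whole range, and this is where the constraint $\eps>4^{-c}$ is used. It gives $L<c\ln4$, hence
\[
\Ltil<(2c+1)\ln2\le\tfrac52 c\ln2 \quad (c\ge2),
\]
and so
\[
\frac{4c}{\Ltil}>\frac{8}{5\ln2}>2.3,\qquad \log_2\frac{4c}{\Ltil}>1.2 .
\]
Hence $\log_2(4c/L)\le(1+\frac1{1.2})\log_2(4c/\Ltil)$.

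\textbf{Step 4: assemble.} Write $\ln(4c/L)=\ln2\cdot\log_2(4c/L)$ and use $L\le\Ltil$. This gives
\[
L\ln(4c/L)\le \ln2\,\bigl(1+\tfrac{1}{1.2}\bigr)\,\Ltil\log_2(4c/\Ltil)\approx1.27\,\Ltil\log_2(4c/\Ltil).
\]
This is comfortably below the stated factor $2.01$. The only care needed is to keep the numerical lower bound in Step 3 rigorous (using $\ln2<0.6932$) and to check that the endpoint $c=2$ is covered.
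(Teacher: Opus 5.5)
Your proof is correct and follows the paper's own argument: Corollary~\ref{cor:kdl} at $\alpha=4$, then $\Ltil\le2L$ to create the additive $\ln2$, absorbed through a lower bound on $4c/\Ltil$ coming from $\eps>4^{-c}$, and finally $L\le\Ltil$. The only difference is numerical. You bound $\Ltil\le L+\ln2<\frac52c\ln2$ using $c\ge2$, which gives $4c/\Ltil>2.3$ in place of the paper's $4c/\Ltil>2/\ln4$, and so the sharper factor $\approx1.27$ instead of $2.005$, which in turn implies the stated $C'=2.01\,C|_{\alpha=4}$.
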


\noindent
Corollary~\ref{cor:matched} is Corollary~\ref{cor:kdl} with $\alpha$ fixed at
$4$ and both logarithmic factors rewritten in the $(\Ltil,\log_2(c/\Ltil))$
parameters; it is derived from Corollary~\ref{cor:kdl} in
Section~\ref{sec:upper}, immediately after it. It is strictly weaker than
Corollary~\ref{cor:kdl}: the free parameter $\alpha$ is spent, narrowing the
range from $\alpha^{-c}<\eps$ to $4^{-c}<\eps$, and a factor $2.01$ is paid
for the conversion. Its purpose is to let the upper bound be read directly
against lower-bound statements written in those parameters. The logarithmic
factor is written out as $\log_2(4c/\Ltil)$ rather than abbreviated, so that
every constant in it is visible. Appendix~\ref{app:constants} comments on the
auxiliary constants of both corollaries.

\subsection{The mechanism, and the obstruction it answers}\label{sec:mech}

The known route to uniform plunge bounds is: reduce to the off-diagonal
factor $T=P_{A^c}Q_BP_A$ via $S-S^2=T^*T$; control $\|T\|_p^p$ at the
self-tuned exponent $p=1/\Ltil$ (so that the Schatten--Markov inequality
costs only a factor $e^{1/2}$); and assemble $\|T\|_p^p$ from pieces using
the only general inequality available in the quasi-Banach range $0<p<1$,
Rotfel'd subadditivity $\|\sum X_k\|_p^p\le\sum\|X_k\|_p^p$. The danger is
well known: subadditivity treats the pieces as if their singular subspaces
were independent, and \emph{independent pieces genuinely add Schatten
mass}. The rank-one counterexample
$T_l=E_{ll}$, $\sum_lT_l=I_{D+1}$, $\|\sum T_l\|_p^p=D+1$, shows that no
Cotlar--Stein-type almost-orthogonality hypothesis can prevent this.
Each extra independent family of pieces therefore
threatens an extra logarithmic (or worse) factor; this is how the
general-set bounds of \cite{KDL} ($LR^2$) and, in a related setting, of
Marceca--Romero--Speckbacher \cite{MRS} (a higher power of the logarithm)
lose sharpness.

The way out, in product geometry, is that the pieces of the natural
decomposition are \emph{not} independent: they share singular subspaces
through an exact tensor structure. The mechanism has seven steps:
\begin{enumerate}[label=(M\arabic*),itemsep=1pt,topsep=3pt]
\item reduce the plunge count to the off-diagonal factor
$T=P_{(cA_0)^c}Q_{B_0}P_{cA_0}$, through $S-S^2=T^*T$ and Markov;
\item reduce to single box pairs $(R,Q)$, at an additive cost over the $MK$
components (Proposition~\ref{prop:pairs});
\item telescope the complement of the product box $R$ into exactly $d$
terms, \eqref{eq:telescope};
\item recognize each term as an elementary tensor operator with \emph{one}
normal one-dimensional off-diagonal factor and $d-1$ tangential
one-dimensional localization factors, \eqref{eq:tensor};
\item multiply Schatten quasi-norms across the tensor factors
(Lemma~\ref{lem:tensor}), an identity rather than an estimate;
\item read off the tangential factors, which carry area-law mass
$\asymp\ell_mb_m\asymp c$ each and so produce the $c^{\,d-1}$ surface scale
(Lemma~\ref{lem:mass});
\item read off the normal factor, which supplies the single logarithm and
nothing more (Proposition~\ref{prop:1d}).
\end{enumerate}
In detail: after reducing to a single pair of boxes
$(R,Q)$, the complement of $R=\prod_m(0,\ell_m)$ telescopes,
\begin{equation}\label{eq:telescope}
\ind{R^c}(x)\;=\;1-\prod_{m=1}^d\ind{(0,\ell_m)}(x_m)
\;=\;\sum_{k=1}^d\Bigl[\prod_{m<k}\ind{(0,\ell_m)}(x_m)\Bigr]\,
\ind{(0,\ell_k)^c}(x_k),
\end{equation}
splitting $T$ into exactly $d$ pieces, the $k$-th of which is the
\emph{elementary tensor operator}
\begin{equation}\label{eq:tensor}
\Bigl[\bigotimes_{m<k}S_m\Bigr]\otimes\;\widetilde T_k\;\otimes
\Bigl[\bigotimes_{m>k}Q_{B_m}P_{(0,\ell_m)}\Bigr],
\end{equation}
with $S_m$ the one-dimensional localization operators of
\eqref{eq:Sell} and $\widetilde T_k=P_{(0,\ell_k)^c}Q_{B_k}P_{(0,\ell_k)}$
exactly the one-dimensional off-diagonal operator of \cite{Afard1d}.
Schatten quasi-norms multiply across tensor factors
(Lemma~\ref{lem:tensor}), so every tangential mode rides the \emph{same}
normal singular profile $\widetilde T_k$ and the quasi-norm sees that
sharing exactly. The tangential factors then carry area-law mass
$\asymp\ell_mb_m\asymp c$ (Lemma~\ref{lem:mass}) and the normal factor the
plunge mass $\|\widetilde T_k\|_p^p\le2\Ltil G_k$ of
Proposition~\ref{prop:1d}; multiplying $d-1$ area factors by one plunge
factor gives $c^{d-1}LR$. Additive (Rotfel'd) losses are incurred only over
the $d$ directions of \eqref{eq:telescope}, the $MK$ component pairs and,
inside the one-dimensional input, the $O(\log)$ normal scales: one
logarithm in total, as conjectured. Section~\ref{sec:reduce} carries this
out.

The same tensor structure drives the lower bounds, in the opposite
direction. If $d$ one-dimensional eigenvalues each lie in the window
$(\tfrac14,\tfrac34)$, their product lies in $(4^{-d},(3/4)^d)$ and hence
inside $(\eps,1-\eps)$ for $\eps<4^{-d}$; so a one-dimensional window count
$M_a$ produces a $d$-dimensional block of size $M_a^{\,d}$. What must be
supplied is therefore purely one-dimensional: that the sinc-kernel spectrum
really does place $\gtrsim\log\ell$ eigenvalues in a fixed window.
Section~\ref{sec:lower} proves this from the trace identity together with the
sine-kernel determinant asymptotics of Basor and Widom, in the uniform form
proved by Charlier.

This is the ``cross-scale singular-subspace sharing'' that the obstruction
described above demands, realized structurally rather than
through orthogonality estimates. Its limits are the subject of
Section~\ref{sec:limits}: \eqref{eq:telescope} and \eqref{eq:tensor} use
the product structure of $R$ \emph{and} of $Q$, which is exactly what fails
for curved boundaries.

\subsection{Relation to prior work}\label{sec:priorwork}

The one-dimensional counting problem originates with Slepian, Landau and
Pollak, whose prolate spheroidal analysis identified the transition region
and its logarithmic width; Landau and Widom \cite{LandauWidom} made this
precise, showing that for fixed $\eps\in(0,\frac12)$
\[
\#\{n:\eps<\lambda_n(c)<1-\eps\}
=\frac{2}{\pi^2}\log c\,\log\Bigl(\frac1\eps-1\Bigr)+o(\log c),
\qquad c\to\infty .
\]
Non-asymptotic one-dimensional results in the same direction are due to
Kulikov \cite{Kulikov23,Kulikov26}, who bounds how far the leading
eigenvalues stay from $1$ before the plunge, and to Karnik, Romberg and
Davenport \cite{KRD21}, who prove two non-asymptotic \emph{upper} bounds on
the transition-region count $\#\{n:\eps<\lambda_n<1-\eps\}$ with the
Landau--Widom leading constant $2/\pi^2$, together with non-asymptotic
bounds on the individual eigenvalues, from below for the indices ahead of
the time--bandwidth product and from above for those behind it. The
one-dimensional input used here is that of \cite{Afard1d}; no argument below
consumes \cite{Kulikov23,Kulikov26,KRD21}.

In higher dimensions the comparison with \cite{KDL} is best made statement by
statement. Kulikov and Dam Larsen prove the $c^{\,d-1}LR$ bound
\cite[Thm.~1.3]{KDL} when one of the two sets is a finite union of
parallelepipeds with disjoint interiors and the other is bounded with finite
upper Minkowski boundary content, a $c^{\,d-1}LR^{2}$ bound
\cite[Thm.~1.6]{KDL} for general sets of finite upper Minkowski boundary
content, and, for the cube pair, two-sided estimates
\cite[Thm.~1.1]{KDL}. The last of these include a \emph{lower} bound of the
same $c^{\,d-1}LR$ order on the deep polynomial range
$\alpha_d^{-c}<\eps<c^{-\alpha_d}$, and the exact order
$\bigl(\log\frac1\eps\,/\log(\log\frac1\eps\,/\,c)\bigr)^d$ for
$\Lambda^-_\eps$ when $\eps\le\alpha_d^{-c}$.
\cite[Prop.~1.2]{KDL} extends Theorem~1.1 to axis-parallel boxes of
unequal side lengths, and \cite[Rmk.~1.4]{KDL} observes that for finite
unions of axis-parallel boxes the disjointness of interiors may be arranged
by subdivision and so costs nothing.

Three consequences follow, and none of them is in this paper's favour on the
question of order.

\emph{The present paper is not broader geometrically than \cite{KDL}.}
Assumption~\ref{ass:boxes} requires boxes on \emph{both} sides, which
\cite[Thm.~1.3]{KDL} does not.

\emph{Nor is Corollary~\ref{cor:kdl} independent of it.} Every pair
$(A_0,B_0)$ admitted by Assumption~\ref{ass:boxes} is admitted by
\cite[Thm.~1.3]{KDL}, whose conclusion on $c\ge2$, $\alpha^{-c}<\eps<\frac12$
has the same form. As far as the order goes, Corollary~\ref{cor:kdl} is
therefore \emph{contained in} \cite[Thm.~1.3]{KDL}, not incomparable to it.

\emph{What is not contained in \cite{KDL} is Theorem~\ref{thm:upper} itself.}
It is a single closed-form estimate holding at every $c>0$ and every
$\eps\in(0,\frac12)$, with every constant a written function of the side
lengths. The statements of \cite{KDL} carry an implied constant and an
existentially quantified $\alpha=\alpha(d,A_0,B_0)\ge4$, and their range is
$c\ge2$ with the split at $\eps=\alpha^{-c}$. Within product-box geometry the
present result gives a concrete all-parameter estimate and an independent
tensor-factorization proof. No comparison of constants is intended: those of
\cite{KDL} are implicit, and none is extracted anywhere below. The mechanism
is what the rest of the program deforms, being designed to localize along
$\partial A$ and therefore to have a chance at the general conjecture (the
case where \cite[Thm.~1.6]{KDL} falls one logarithm short).

A second line of work, by Israel, Mayeli and Hughes, meets this one at two
places. Israel and Mayeli \cite[Thm.~1.1]{IM24} bound the plunge count when
the spatial set is a hypercube and the frequency set a coordinate-wise
symmetric convex body, by an orthonormal system of tensor-product wave
packets built from a local sine basis, with $r^{\,d-1}\log(r/\eps)^{5/2}$ in
the surface term. For the cube pair $Q=[0,1]^d$, $S(r)=[-r,r]^d$ their
\cite[Thm.~1.4]{IM24} instead uses the observation that the cube-pair
operator is a $d$-fold composition of one-dimensional limiting operators,
one in each variable, so that its eigenvalues are products of one-dimensional
eigenvalues, and feeds the one-dimensional bounds of \cite{KRD21} through
that product to reach
$C_d\max\{r^{\,d-1}\log r\log\frac1\eps,(\log r\log\frac1\eps)^d\}$, assuming
$r\ge2\pi$ in their Fourier normalization and $\eps\in(0,\frac12)$. That
normalization is not the one used here, and the conversion is spent rather
than left implicit: \cite{IM24} sets
$\F f(\xi)=\int_{\R^d}f(x)e^{-ix\cdot\xi}\,dx$ where
Section~\ref{sec:conj} sets $e^{-2\pi ix\cdot\xi}$, so the band $[-r,r]$ has
width $r/\pi$ in the frequency normalization used here, and
\[
c=\frac r\pi,\qquad\text{so that } r\ge2\pi \text{ reads } c\ge2 .
\]
Their estimate overlaps Corollary~\ref{cor:kdl} on the model cube pair. After
the conversion the two bounds have the same asymptotic order for fixed
$\eps$, and also at fixed polynomial depth $\eps=c^{-q}$ with $q>0$ fixed as
$c\to\infty$. They are not uniformly identical over the full moving-$\eps$
range: the factor $\log r$ of \cite[Thm.~1.4]{IM24} is fixed by $c$ alone,
whereas the factor $\log\bigl(\alpha c/\log\frac1\eps\bigr)$ of
Corollary~\ref{cor:kdl} tunes itself to $\eps$ and collapses to $O(1)$ once
$\log\frac1\eps$ approaches the exponential scale $\asymp c$, which is where
the two separate. Theorem~\ref{thm:upper} additionally covers finite unions
of boxes with arbitrary side lengths, all $c>0$, and constants written
explicitly in those side lengths in place of a dimensional $C_d$. Off the
box class, Hughes, Israel
and Mayeli \cite{HIM2} treat maximally Ahlfors-regular boundaries by a
two-stage dyadic decomposition into cubical components, improving
Marceca, Romero and Speckbacher \cite{MRS} on that class; \cite{HIMdisk}
treats the disk and the ellipse by a boundary-adapted wave-packet frame with
Gevrey-$s$ cutoffs, reaching logarithmic exponent $1+2s$ for each $s>1$; and
\cite{HIMrough} converts the count into a bound on $\Tr(S-S^2)$, which at
fixed $\eps$ gives $\eps^{-1}c^{\,d-1}\log c$ whenever both sets merely have
finite perimeter. All four are upper bounds carrying an implied constant,
and the last buys its single logarithm in $c$ by trading the factor $L$ for
a power of $\frac1\eps$.

Two-term Szeg\H{o}-type asymptotics for \emph{smooth} functions of
$S_{A,B}$, which do not control the sharp counting function, go back to Widom
and, for piecewise-smooth domains, Sobolev \cite{Sobolev}.

\subsection{Plan}\label{sec:plan}

Section~\ref{sec:prelim} fixes notation and quotes the one-dimensional
input. Sections~\ref{sec:reduce}--\ref{sec:upper} prove
Theorem~\ref{thm:upper}, Corollary~\ref{cor:kdl} and
Corollary~\ref{cor:matched}.
Section~\ref{sec:lower} proves the lower bounds: the window count
(Theorem~\ref{thm:window}), the higher traces at every fixed order
(Proposition~\ref{lem:higher}), the polynomial-minorant lemma
(Lemma~\ref{lem:minorant}) and the fixed-depth density it supports
(Proposition~\ref{prop:fixeddepth}), the tensor block
(Theorem~\ref{thm:lower}) and the trace estimate (Theorem~\ref{thm:nq}).
Section~\ref{sec:limits} states what breaks off the box class, and signposts
the remaining steps of the program: curved spatial boundaries, curved
frequency sets, the curved pair, and recursive skeletons.

The division that governs the whole paper is this.
Sections~\ref{sec:reduce}--\ref{sec:upper}, the entire upper bound, assume
nothing beyond one quoted external estimate, the one-dimensional
off-diagonal bound of \cite{Afard1d} recorded as Proposition~\ref{prop:1d};
everything built on it is proved here. Section~\ref{sec:lower} rests instead
on a single cited theorem, the sine-kernel determinant asymptotics of Basor
and Widom in the uniform form proved by Charlier
\cite{BasorWidom83,Charlier21}, reached through Appendix~\ref{app:exact}. Its
results (Theorem~\ref{thm:window}, Proposition~\ref{lem:higher},
Proposition~\ref{prop:fixeddepth}, Theorem~\ref{thm:lower} and
Theorem~\ref{thm:nq}) assume nothing about the eigenvalue profile. There
is no third tier: no statement of this paper is conditional on an unproved
hypothesis. Two limitations are stated as plainly as the results themselves:
there is no uniformity in the trace order $m$ or in the depth $u$
(Remark~\ref{rem:mquant}), and no matching lower bound.

\section{Preliminaries}\label{sec:prelim}

\subsection{The plunge count}\label{sec:plunge}

\begin{definition}[plunge count]\label{def:plunge}
For $0<\eps<\tfrac12$,
\[
\Lambda_\eps(cA_0,B_0)=\#\{n:\ \eps<\lambda_n(S)<1-\eps\},
\qquad S=P_{cA_0}Q_{B_0}P_{cA_0},
\]
the eigenvalues $\lambda_n(S)$ being listed in decreasing order with
multiplicity.
\end{definition}

\begin{remark}[half-open convention, and the KDL half-counts]\label{rem:halfopen}
KDL count plunge eigenvalues with the half-open convention
$\#\{n:\ \eps<\lambda_n(S)\le1-\eps\}$. Every bound below extends to that
count with the same constant, by applying it at $\eps'<\eps$ and letting
$\eps'\uparrow\eps$, the bound being continuous in $\eps$.
The two counts can differ, by the multiplicity of the eigenvalue
$1-\eps$ when there is one; it is the \emph{bounds}, not the counts, that
transfer. All statements in this paper use Definition~\ref{def:plunge}
unless the half-open count is named explicitly.

Since Section~\ref{sec:scopelower} and Section~\ref{sec:stops} refer to them,
we record the two half-counts of \cite[(1.1)]{KDL}, with their arrangement of
strict and non-strict inequalities:
\[
\Lambda^+_\eps=\#\Bigl\{n:\ 1-\eps\ge\lambda_n>\tfrac12\Bigr\},\qquad
\Lambda^-_\eps=\#\Bigl\{n:\ \tfrac12\ge\lambda_n>\eps\Bigr\},
\]
so that
$\Lambda^+_\eps+\Lambda^-_\eps=\#\{n:\ \eps<\lambda_n\le1-\eps\}$, the
half-open count itself. The \emph{lower-half} count $\Lambda^-_\eps$ collects
the eigenvalues strictly above the threshold and at or below the plunge
midpoint, and it is the one every lower-side statement below is compared
against. Nothing in this paper is sensitive to the endpoint conventions; they
are fixed once so that the comparisons are unambiguous.
\end{remark}

\subsection{The logarithmic parameters}\label{sec:logparams}

The two parameters $L$ and $\Ltil$ of \eqref{eq:Ltil} are distinct, and both
occur: the uniform Theorem~\ref{thm:upper} is stated through
$p=1/\Ltil$, while Corollary~\ref{cor:kdl} is stated in $L$. They are
related by
\begin{equation}\label{eq:Lchain}
L\ \le\ \Ltil\ \le\ L+\ln2\ \le\ 2L\ \le\ 2c\ln\alpha,
\end{equation}
where the first three inequalities hold throughout $0<\eps<\frac12$ and the
last uses the hypotheses of Corollary~\ref{cor:kdl}, namely $\alpha\ge4$,
$c\ge2$ and $\alpha^{-c}<\eps<\frac12$, which give
$L=\ln(1/\eps)\in(\ln2,\,c\ln\alpha)$. The chain \eqref{eq:Lchain} is used
in exactly that scope; outside it, only $L\le\Ltil\le L+\ln2\le2L$ is
available.

For the logarithmic factor in $c$ two forms are in circulation, and we
convert between them once here rather than silently identify them. The factor
produced by the proof of Corollary~\ref{cor:kdl} is the natural logarithm
\begin{equation}\label{eq:X}
X=\ln\frac{\alpha c}{L},
\end{equation}
which is what appears in the statement of that corollary. Lower-bound
statements in this problem are customarily written instead with the base-two
factor $\log_2(c/\Ltil)$, whose denominator is $\Ltil$ and not a constant. It
is therefore \emph{not} of order $\ln\ell$, and no statement in this paper
reads it that way. The two orders part company on the range where $\Ltil$ is
comparable to $c$: there $\log_2(c/\Ltil)=O(1)$ while $\ln\ell\to\infty$.

$X$ and $\log_2(c/\Ltil)$ are nevertheless comparable, up to an additive
constant. Since $\log_2x=\ln x/\ln2$ and, by \eqref{eq:Lchain},
$L\le\Ltil\le2L$,
\begin{equation}\label{eq:Rconvert}
\ln\frac{\alpha c}{\Ltil}\ \le\ X\ \le\ \ln\frac{\alpha c}{\Ltil}+\ln2 ,
\end{equation}
and $\ln\frac{\alpha c}{\Ltil}=\ln\alpha+\ln2\cdot\log_2\frac c{\Ltil}$, so
$X$ and $\ln2\cdot\log_2\frac c{\Ltil}$ differ by an additive constant
depending only on $\alpha$, lying between $\ln\alpha$ and $\ln\alpha+\ln2$
rather than by $\ln2$ alone. The comparison is additive and not
multiplicative: the
ratio $X/(\ln2\cdot\log_2\frac c{\Ltil})$ is unbounded as $\Ltil\uparrow c$,
since $X\ge\ln\alpha>0$ while $\ln(c/\Ltil)\to0$.
The multiplicative comparison is carried out once, at
$\alpha=4$, in the proof of Corollary~\ref{cor:matched}, with the explicit
factor $2.01$. No comparison uniform in $\alpha$ is asserted: that proof rests on
$\ln\frac{4c}\Ltil>\ln\frac2{\ln\alpha}$, which degrades as $\alpha$ grows
and is vacuous once $\alpha\ge e^2$.
Sections~\ref{sec:reduce}--\ref{sec:upper} use only $X$; it occurs in no
statement of Section~\ref{sec:lower}.

\subsection{Singular value calculus}\label{sec:svcalc}

For a compact operator $X$ between separable Hilbert spaces let
$s_1(X)\ge s_2(X)\ge\cdots$ be its singular values,
$n(t;X)=\#\{n:s_n(X)>t\}$ and $\|X\|_p^p=\sum_ns_n(X)^p$ for $0<p\le1$.

\begin{lemma}\label{lem:svcalc}
Let $X,Y$ be compact, $U,V$ bounded, $0<p\le1$, $t>0$. Then:
\emph{(i)} $s_n(UXV)\le\|U\|\|V\|s_n(X)$;
\emph{(ii)} \textup{(Rotfel'd \cite{Rotfeld}; cf.\ \cite[Thm.~2.8]{SimonTI})}
$\|X+Y\|_p^p\le\|X\|_p^p+\|Y\|_p^p$;
\emph{(iii)} \textup{(Markov)} $n(t;X)\le t^{-p}\|X\|_p^p$;
\emph{(iv)} if $U,V$ are multiplications by unimodular functions on the
output and input spaces, then $s_n(UXV)=s_n(X)$.
\end{lemma}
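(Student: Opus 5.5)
The four items of Lemma~\ref{lem:svcalc} are standard, and the plan is to prove each from the min--max (Courant--Fischer/Allakhverdiev) characterization of singular values,
\[
s_n(X)=\min\{\|X-F\|:\ \operatorname{rank}F<n\},
\]
quoting Rotfel'd for (ii).

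For \emph{(i)}, take any $F$ with $\operatorname{rank}F<n$. Then $UFV$ also has rank less than $n$, and
\[
\|UXV-UFV\|\le\|U\|\,\|X-F\|\,\|V\|.
\]
Minimizing over $F$ gives $s_n(UXV)\le\|U\|\|V\|s_n(X)$.

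For \emph{(iv)}, the multiplication operators $U,V$ by unimodular functions are unitary, with inverses given by multiplication by the conjugate functions. Applying (i) once to $UXV$ and once to $X=U^{-1}(UXV)V^{-1}$ gives the two opposite inequalities, hence equality.

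For \emph{(iii)}, let $N=n(t;X)$. The first $N$ singular values all exceed $t$, so
\[
N t^p\le\sum_{n\le N}s_n(X)^p\le\|X\|_p^p .
\]
Dividing by $t^p$ gives the Markov bound.

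For \emph{(ii)}, I cite Rotfel'd \cite{Rotfeld} (see \cite[Thm.~2.8]{SimonTI}): for a concave $f\ge0$ with $f(0)=0$ one has $\Tr f(|X+Y|)\le\Tr f(|X|)+\Tr f(|Y|)$. Applying this with $f(s)=s^p$, which is concave because $0<p\le1$, gives the inequality. This is the only nontrivial step. If a self-contained argument is wanted, the route is Thompson's theorem: there are partial isometries $W_1,W_2$ with $|X+Y|\le W_1|X|W_1^*+W_2|Y|W_2^*$. Combining this with operator monotonicity of $s\mapsto s^p$ and the concave trace inequality $\Tr(A+B)^p\le\Tr A^p+\Tr B^p$ for positive $A,B$ completes the proof. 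The main obstacle would be that last trace inequality, but I would simply cite it.
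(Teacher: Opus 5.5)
Your proposal is correct and matches the paper. The paper gives no independent argument here but defers to \cite[Lem.~2.1]{Afard1d}, and in Appendix~\ref{app:kyfan} it obtains (ii) exactly as you do, from Rotfel'd's theorem applied to $f(t)=t^p$, which is concave, non-decreasing and satisfies $f(0)=0$; the monotonicity hypothesis you omit holds automatically for a nonnegative concave function on $[0,\infty)$. Your approximation-number arguments for (i), (iii) and (iv) are the standard ones the paper calls ``standard'' and are complete as written.
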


These are as in \cite[Lem.~2.1]{Afard1d}. We add the tensor identity, which
is the engine of this paper.

\begin{lemma}[Tensor multiplicativity]\label{lem:tensor}
Let $X_1,\dots,X_r$ be compact operators, $X_\nu:H_\nu\to H'_\nu$. Then the
singular values of $X_1\otimes\cdots\otimes X_r$ on
$H_1\otimes\cdots\otimes H_r$ are, as a multiset,
$\{\,s_{n_1}(X_1)\cdots s_{n_r}(X_r):\ n_1,\dots,n_r\ge1\,\}$, and
consequently, for every $0<p\le1$ (allowing $+\infty$),
\[
\|X_1\otimes\cdots\otimes X_r\|_p^p
=\prod_{\nu=1}^r\|X_\nu\|_p^p .
\]
\end{lemma}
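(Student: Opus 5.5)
The plan is to reduce to the case $r=2$ by induction on $r$, and to obtain the singular values of $X\otimes Y$ from the Schmidt (singular value) decompositions of the two factors. Write
\[
X_1=\sum_n s_n(X_1)\,\langle\cdot,e_n\rangle f_n,\qquad X_2=\sum_k s_k(X_2)\,\langle\cdot,g_k\rangle h_k,
\]
with $\{e_n\}$, $\{f_n\}$, $\{g_k\}$, $\{h_k\}$ orthonormal families; the sums run over the nonzero singular values. The tensor product $X_1\otimes X_2$ is defined on elementary tensors and extended by boundedness, $\|X_1\otimes X_2\|=\|X_1\|\|X_2\|$. On elementary tensors it acts by
\[
X_1\otimes X_2=\sum_{n,k}s_n(X_1)s_k(X_2)\,\langle\cdot,e_n\otimes g_k\rangle\, f_n\otimes h_k .
\]
The families $\{e_n\otimes g_k\}$ and $\{f_n\otimes h_k\}$ are orthonormal in $H_1\otimes H_2$ and $H_1'\otimes H_2'$.

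The first point to check is that this double series converges in operator norm, so that it really equals $X_1\otimes X_2$. Truncating to $n\le N$ and $k\le K$ gives the operator $X_1^{(N)}\otimes X_2^{(K)}$. Its distance from $X_1\otimes X_2$ is bounded by
\[
\|X_1-X_1^{(N)}\|\,\|X_2\|+\|X_1^{(N)}\|\,\|X_2-X_2^{(K)}\|,
\]
which tends to $0$ by compactness. Hence $X_1\otimes X_2$ is a norm limit of finite-rank operators and is therefore compact.

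Moreover, the displayed expansion is an orthonormal-system (Schmidt) representation with coefficients $s_n(X_1)s_k(X_2)$, and these coefficients tend to $0$. By uniqueness of the singular values in such a representation, the nonzero singular values of $X_1\otimes X_2$ are exactly the products $s_n(X_1)s_k(X_2)$, counted with multiplicity. Concretely, $|X_1\otimes X_2|^2=|X_1|^2\otimes|X_2|^2$ is diagonal in the basis $e_n\otimes g_k$ and vanishes on the orthogonal complement. Zero singular values, if one lists them, also match: pairs with $s_n=0$ contribute products equal to $0$, consistent with the kernel. The multiset statement for general $r$ then follows by induction, using $X_1\otimes\cdots\otimes X_r=(X_1\otimes\cdots\otimes X_{r-1})\otimes X_r$ under the canonical unitary identification of the tensor products. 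That identification preserves singular values by Lemma~\ref{lem:svcalc}.

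The norm identity is then a rearrangement of a series of nonnegative terms (Tonelli):
\[
\sum_{n_1,\dots,n_r}\prod_\nu s_{n_\nu}(X_\nu)^p=\prod_\nu\sum_{n}s_n(X_\nu)^p .
\]
This holds in $[0,\infty]$, with the convention $0\cdot\infty=0$. That convention is consistent because a zero factor means some $X_\nu=0$, and then the tensor product is $0$.

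The only step needing care is the norm convergence and the identification of the Schmidt expansion with the singular value decomposition. Everything else is bookkeeping. I expect no real obstacle, provided compactness is kept in view so that the expansion is legitimate.
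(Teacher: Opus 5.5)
Your proof is correct and essentially the paper's. The paper also reduces to $r=2$, diagonalizes $(X\otimes Y)^*(X\otimes Y)=(X^*X)\otimes(Y^*Y)$ in the tensor product of eigenbases to read off the singular values as the products $s_n(X)s_k(Y)$, and finishes with Tonelli. Your Schmidt-expansion framing and the explicit norm-convergence check for compactness only make explicit what the paper leaves implicit.
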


\begin{proof}
By iteration it suffices to treat $r=2$. Since
$(X\otimes Y)^*(X\otimes Y)=(X^*X)\otimes(Y^*Y)$, it suffices to show that
for positive compact $A,B$ the spectrum (with multiplicity) of $A\otimes B$
is $\{\alpha_i\beta_j\}$ where $\{\alpha_i\},\{\beta_j\}$ are the
eigenvalues of $A,B$ repeated by multiplicity. Choose orthonormal bases
$\{u_i\}$ of $H_1$ and $\{v_j\}$ of $H_2$ consisting of eigenvectors
($Au_i=\alpha_iu_i$, $Bv_j=\beta_jv_j$; eigenvalue $0$ on the kernels).
Then $\{u_i\otimes v_j\}$ is an orthonormal basis of $H_1\otimes H_2$ and
$(A\otimes B)(u_i\otimes v_j)=\alpha_i\beta_j\,u_i\otimes v_j$, which is a
complete spectral resolution. The quasi-norm identity follows from
Tonelli's theorem:
$\sum_{i,j}(s_is'_j)^p=\bigl(\sum_is_i^p\bigr)\bigl(\sum_js'^{\,p}_j\bigr)$.
\end{proof}

\begin{remark}[the tensor structure, and where it is used]\label{rem:tensor}
For a single box pair $R=\prod_m(0,\ell_m)$, $Q=\prod_mB_m$, the operator
$S_{R,Q}=P_RQ_QP_R$ is the exact tensor product $\bigotimes_{m=1}^dS_m$ of
the one-dimensional operators $S_m=P_{(0,\ell_m)}Q_{B_m}P_{(0,\ell_m)}$, so
by Lemma~\ref{lem:tensor} its eigenvalues are exactly the products
$\prod_{m}\lambda_{n_m}(S_m)$. This identity is used in both directions:
upward through the Schatten quasi-norms in Section~\ref{sec:reduce}, and
downward, as a counting statement about products of one-dimensional
eigenvalues, in Section~\ref{sec:lower}.
\end{remark}

\subsection{Compactness and the plunge reduction}\label{sec:compact}

\begin{lemma}\label{lem:compact}
$S=P_AQ_BP_A$ is self-adjoint, $0\le S\le I$, and Hilbert--Schmidt.
Moreover, with $T=P_{A^c}Q_BP_A$,
\[
S-S^2=T^*T,\qquad\text{and}\qquad
\Lambda_\eps\le n\bigl(t;T\bigr),\quad t=\sqrt{\eps(1-\eps)},
\]
for every $0<\eps<\frac12$.
\end{lemma}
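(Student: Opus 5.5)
The plan has three parts: establish the structural properties of $S$, derive the identity $S-S^2=T^*T$ from $P_A+P_{A^c}=I$, and convert the plunge count into a singular-value count for $T$.

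\emph{Structure of $S$.} Since $P_A$ and $Q_B$ are orthogonal projections ($\F$ is unitary, so $Q_B=\F^{-1}P_B\F$ is self-adjoint and idempotent), $S=P_AQ_BP_A=(Q_BP_A)^*(Q_BP_A)$ is self-adjoint and nonnegative. For every $f$,
\[
\langle Sf,f\rangle=\|Q_BP_Af\|^2\le\|f\|^2 ,
\]
so $0\le S\le I$. For the Hilbert--Schmidt property, the operator $Q_BP_A$ has integral kernel $k(x,y)=\check{\ind B}(x-y)\,\ind A(y)$. By Plancherel,
\[
\int\!\!\int|k|^2\,dx\,dy=|A|\,\|\check{\ind B}\|_2^2=|A|\,|B|<\infty ,
\]
because $A,B$ are bounded and hence of finite measure. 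Therefore $Q_BP_A$ is Hilbert--Schmidt, and so is $S$, since it is a product of a bounded operator with a Hilbert--Schmidt one. In particular $S$ is compact, and its eigenvalues $\lambda_n(S)\in[0,1]$ are well defined with multiplicity.

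\emph{The identity $S-S^2=T^*T$.} Write $I=P_A+P_{A^c}$ between the two copies of $Q_B$. Then
\[
S=P_AQ_BQ_BP_A=P_AQ_B(P_A+P_{A^c})Q_BP_A=S^2+P_AQ_BP_{A^c}Q_BP_A ,
\]
using $Q_B^2=Q_B$ and $P_A^2=P_A$. The last term is exactly $T^*T$ with $T=P_{A^c}Q_BP_A$, since $T^*=P_AQ_BP_{A^c}$ and $P_{A^c}^2=P_{A^c}$. This gives $S-S^2=T^*T$.

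\emph{Counting reduction.} Since $S$ is compact self-adjoint, $S-S^2=f(S)$ with $f(\lambda)=\lambda(1-\lambda)$. The eigenvalues of $T^*T$ are therefore $\lambda_n(1-\lambda_n)$, with multiplicity, on the same eigenbasis; the kernel contributes only zeros. Hence the singular values of $T$ are the numbers $\sqrt{\lambda_n(1-\lambda_n)}$. The function $f$ is symmetric about $\tfrac12$ and strictly increasing on $[0,\tfrac12]$, so $\eps<\lambda<1-\eps$ implies $f(\lambda)>f(\eps)=\eps(1-\eps)=t^2$. Each plunge eigenvalue thus yields a distinct singular value of $T$ exceeding $t$, and counting with multiplicity gives $\Lambda_\eps\le n(t;T)$.

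The main obstacle is the bookkeeping in this last step. One must check that the multiset of singular values of $T$ coincides with $\{\sqrt{f(\lambda_n)}\}$ counted with multiplicity, so that the map from indices is injective. This follows from the spectral theorem applied to $S$, since $f(S)$ is diagonal in any eigenbasis of $S$.
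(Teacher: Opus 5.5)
Your proof is correct and takes the paper's route: the same insertion of $I=P_A+P_{A^c}$ between the two copies of $Q_B$ to get $S-S^2=T^*T$, the same spectral mapping for $\lambda\mapsto\lambda(1-\lambda)$, which exceeds $\eps(1-\eps)$ exactly on $(\eps,1-\eps)$, and the same Plancherel computation bounding the Hilbert--Schmidt norm squared by $|A|\,|B|$. The only difference is cosmetic. You bound the kernel of $Q_BP_A$ and then compose with the bounded operator $P_A$, whereas the paper bounds the kernel of $S$ on $A\times A$ directly.
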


\begin{proof}
Identical to \cite[Lem.~2.4--2.5]{Afard1d}; the algebra
$T^*T=P_AQ_B(I-P_A)Q_BP_A=S-S^2$ and the spectral-mapping argument for
$\varphi(\lambda)=\lambda(1-\lambda)$, which exceeds $\eps(1-\eps)$ on
$(\eps,1-\eps)$, are dimension-free. Compactness: the kernel of $S$ is
$\ind A(x)g(x-y)\ind A(y)$ with $g=\F^{-1}\ind B\in L^2(\R^d)$, and
$\iint_{A\times A}|g(x-y)|^2\le|A|\,|B|<\infty$.
\end{proof}

\subsection{The one-dimensional operator and the one-dimensional
input}\label{sec:1d}

Both halves of the paper reduce to one and the same one-dimensional object,
which we fix once. Let $Q$ be the orthogonal projection of $L^2(\R)$ onto
the Paley--Wiener space of functions with Fourier support in
$(-\tfrac12,\tfrac12)$; its kernel is $q(x,y)=\sinc(x-y)$ with
$\sinc(t)=\sin(\pi t)/(\pi t)$, $\sinc(0)=1$, and $Q=Q^*=Q^2$. For
$I=(0,\ell)$ let $P=P_I$ and $P_{I^c}=1-P$, and set
\begin{equation}\label{eq:Sell}
S_\ell=P\,Q\,P\quad\text{on }L^2(I),\qquad
1>\lambda_0(\ell)\ge\lambda_1(\ell)\ge\cdots>0 .
\end{equation}
Here $c:=\ell b=\ell$ is the area parameter for the unit band $b=1$; a
general band width $b>0$ enters only through the product $\ell b$. We write
$\varphi:=S_\ell-S_\ell^2=S_\ell(1-S_\ell)$, so that $\varphi\succeq0$,
$\varphi=P\,Q\,P_{I^c}\,Q\,P$, and its eigenvalues are
$\varphi_n=\lambda_n(1-\lambda_n)\in[0,\tfrac14]$. Two elementary
projection identities are used repeatedly:
\begin{equation}\label{eq:proj}
\sinc(0)=1,\qquad \int_\R \sinc(x-y)^2\,dy=q(x,x)=1\quad(x\in\R),
\end{equation}
the second because $q(x,\cdot)\in\operatorname{ran}Q$ and $Q^2=Q$.

All one-dimensional information used by the upper bound enters through the
following proposition, which is \cite[Prop.~3.1, Prop.~5.1]{Afard1d}
specialized to a single pair of intervals (the factor $2$ accounts for the
two sides of the interval). We use it as a black box.

\begin{proposition}[one-dimensional off-diagonal mass;
\cite{Afard1d}]\label{prop:1d}
Let $I\subset\R$ be an interval of length $\ell$ and
$B^\circ=(-b/2,b/2)$. Then for every $0<p\le1$,
\[
\bigl\|P_{I^c}\,Q_{B^\circ}\,P_I\bigr\|_p^p
\;\le\;\frac{2}{p}\,\Bigl[\pi e\,b+12.5+4.5\,\logp(\ell p)\Bigr].
\]
\end{proposition}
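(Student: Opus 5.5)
Since the paper uses Proposition~\ref{prop:1d} as a black box from \cite{Afard1d}, what follows is a sketch of how I would reprove it, not a check of the constants $\pi e$, $12.5$ and $4.5$. The plan has four parts: a reduction to a one-sided half-line problem, an exact Hankel factorization, a near-field/far-field split with a dyadic decomposition, and a per-scale rank estimate.

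\emph{Reduction.} First I normalize. Translations and modulations act by unimodular multiplications on the input and output spaces, so by Lemma~\ref{lem:svcalc}(iv) we may take $I=(0,\ell)$ and $B^\circ=(-b/2,b/2)$. Write $P_{I^c}=P_{(-\infty,0)}+P_{(\ell,\infty)}$. Rotfel'd subadditivity (Lemma~\ref{lem:svcalc}(ii)) and the reflection $x\mapsto\ell-x$ then reduce the claim to the one-sided bound
\[
\bigl\|P_{(\ell,\infty)}Q_{B^\circ}P_{(0,\ell)}\bigr\|_p^p\le\frac1p\Bigl[\pi e\,b+12.5+4.5\logp(\ell p)\Bigr].
\]
This is the origin of the factor $2$.

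\emph{Hankel form.} Pass to boundary-distance coordinates $u=x-\ell>0$ and $v=\ell-y\in(0,\ell)$. The kernel becomes $k(u,v)=\sin(\pi b(u+v))/(\pi(u+v))$, a function of $u+v$ alone, so the operator is a truncated Hankel operator. Using $\sin(\pi b s)=\operatorname{Im}(e^{i\pi bu}e^{i\pi bv})$, the oscillation splits into unimodular factors in $u$ and $v$ separately. This is the exact oscillation factorization: up to unimodular multiplications it is a combination of two copies of the Carleman kernel $1/(u+v)$. The Carleman kernel alone is not compact, because its scale invariance produces continuous spectrum near $u+v=0$. The sine must therefore be kept in the near field, where $|k|\le b$.

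\emph{Near/far split and dyadic scales.} I would split the $v$-range at a boundary layer of width $\asymp 1/p=\Ltil$.
\begin{enumerate}[label=(\roman*),itemsep=1pt,topsep=3pt]
\item \emph{Near field.} On $v\lesssim\max(1/b,1/p)$ I estimate directly. A Hilbert--Schmidt/trace bound uses the boundedness of $\sin(\pi bs)/s$ near $0$ and its $1/s$ decay. Together with Markov-type interpolation at exponent $p$, this should produce the terms $\pi e\,b/p$ and $O(1/p)$; the factor $e$ comes from optimizing $t^{-p}$ at $p=1/\Ltil$.
\item \emph{Far field.} On $v\in(2^j,2^{j+1})$ with $1/p\lesssim2^j\le\ell$, there are at most $\logp(\ell p)+O(1)$ scales. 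On each dyadic block the function $v\mapsto1/(u+v)$ is analytic in a Bernstein ellipse of fixed eccentricity, uniformly in $u>0$, after rescaling by $2^j$.
\end{enumerate}

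\emph{Per-scale estimate and summation.} On each block I expand $1/(u+v)$ in Chebyshev polynomials in $v$ of degree $N\asymp1/p$. The truncated part has rank $\le N+1$ and operator norm $O(1)$ uniformly in $j$, since the rescaled block is a fixed operator. The remainder decays like $\rho^{-N}$ in Hilbert--Schmidt norm, and with $N\asymp 1/p$ it contributes $O(1)$ to $\|\cdot\|_p^p$. Each scale then costs $O(1/p)$. Summing over scales with Rotfel'd gives the $4.5\logp(\ell p)/p$ term, and adding the near-field piece completes the bound.

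\emph{Main obstacle.} I expect the hardest step to be the scale-uniform Hankel--Chebyshev estimate with an explicit constant. The rank-plus-tail bound at exponent $p$ must be independent of both $j$ and $b$, and this has to be reconciled with the unimodular oscillation factors, which do not commute with the dyadic cutoffs in $u$. My first attempt would bound the rescaled remainder via the singular values of the compressed Carleman operator on $(0,\infty)\times(1,2)$, whose decay is exponential and explicitly computable.
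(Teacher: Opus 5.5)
Your architecture (two one-sided halves giving the factor $2$, the Hankel form in boundary-distance coordinates, a boundary layer of width $\asymp1/p$, and Rotfel'd over dyadic far-field scales) is the one the paper attributes to \cite{Afard1d}. The paper quotes the proposition from there without proof. Two of your estimates fail as written, however, and the first is where most of the work lies.

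\emph{The near field.} The block with $v\in(0,V)$, $u\in(0,\infty)$ has infinite rank. For $0<p<1$, no Hilbert--Schmidt or trace-norm information bounds $\|\cdot\|_p^p$: singular values $n^{-2}$ are summable, yet $\sum_nn^{-2p}=\infty$ for $p\le\frac12$. Markov's inequality, Lemma~\ref{lem:svcalc}(iii), runs in the opposite direction. A bound $A+B/p$ holding uniformly as $p\downarrow0$ is, up to constants, the same as $\#\{n:s_n>e^{-k}\}\le A'+B'k$ for all $k$, so it has to be proved as a singular-value decay estimate.

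The missing idea is that in the boundary layer the unsplit sine is an asset. For each $u>0$, the function $v\mapsto\sin(\pi b(u+v))/(\pi(u+v))$ is entire and bounded by $\min\bigl(b,1/(\pi|u+v|)\bigr)e^{\pi b|\operatorname{Im}v|}$. Its supremum over $v$ in a Bernstein ellipse of $[0,V]$ is square-integrable in $u$. Truncate in Chebyshev polynomials in $v$ at degree $n$ and use $s_{n+2}(X)\le\|X-F_n\|$ for $\operatorname{rank}F_n\le n+1$. This gives super-geometric decay, roughly $(e\pi bV/(4n))^n$ up to lower-order factors, once $n$ exceeds about $e\pi bV/4$. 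Hence $\sum_ns_n^p\le\frac{e\pi}{4}bV+O(1/p)$ up to lower-order terms, and with $V\asymp1/p$ this is the $\pi e\,b/p$ term. Equivalently, the block is a compression of the off-diagonal operator of an interval of length $V$, and you need the super-geometric decay of its singular values past the time--bandwidth product $bV$.

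So the $e$ is the Stirling constant of this decay, as in $|J_n(z)|\le(z/2)^n/n!$. It cannot come from optimizing $t^{-p}$: Proposition~\ref{prop:1d} holds for every $p\in(0,1]$ and involves neither $t$ nor $\Ltil$. The factor $t^{-p}=e^{1/2}$ enters only later, in the proof of Theorem~\ref{thm:upper}.

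\emph{The far field.} The per-scale step has the same flaw in milder form. A rank-$(N+1)$ piece plus a remainder that is small in Hilbert--Schmidt norm does not bound the remainder's $p$-quasi-norm. Run the expansion at every degree instead. By $L^2$-dilation and the degree $-1$ homogeneity of $(u+v)^{-1}$, every block $P_{(0,\infty)}(u+v)^{-1}P_{(2^j,2^{j+1})}$ is unitarily equivalent to the one with $v\in(1,2)$. There $t\mapsto(u+t)^{-1}$ is analytic on a fixed ellipse about $[1,2]$ lying in $\operatorname{Re}t>0$, with supremum at most $(u+\delta)^{-1}$ for some $\delta>0$. Hence $s_{N+2}\le C\rho^{-N}$ for every $N$, and $\|X_j\|_p^p\lesssim1+1/(p\ln\rho)$ uniformly in $j$. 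This geometric decay is the scale-uniform Hankel--Chebyshev estimate.

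\emph{The obstacle you single out is not one.} Your cutoffs are multiplication operators, and in your scheme they act in $v$ only. They therefore commute with $e^{\pm i\pi bu}$ and $e^{\pm i\pi bv}$, which Lemma~\ref{lem:svcalc}(iv) removes exactly. Once those factors are removed, the far field does not depend on $b$ at all. The $b$-dependence and the real difficulty are both in the boundary layer, where the sine cannot be split.
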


By translation invariance ($\tau_v$ commutes with $Q_{B^\circ}$ and
conjugates $P_E$ to $P_{E+v}$) the bound is independent of the position of
$I$, and we will use it with $I=(0,\ell)$.

\begin{remark}[the model box pair]\label{rem:cube}
For $A_0=B_0=(0,1)^d$ the cube-pair operator is the exact $d$-fold tensor
power $S_\ell^{\otimes d}$ of \eqref{eq:Sell}, so by
Lemma~\ref{lem:tensor} its eigenvalues are the $d$-fold products
$\prod_{i=1}^d\lambda_{n_i}(\ell)$ and its Schatten quasi-norms multiply
across factors. Every lower bound in Section~\ref{sec:lower} is therefore a
statement about products of the numbers $\lambda_n(\ell)$, which is the
object controlled there.
\end{remark}

\section{Reduction to single box pairs and tensorization}\label{sec:reduce}

Fix $0<p\le1$ throughout this section.

\begin{proposition}[Pair reduction]\label{prop:pairs}
Under Assumption~\ref{ass:boxes}, with $A=cA_0$, $B=B_0$ and
$T=P_{A^c}Q_BP_A$,
\[
\|T\|_p^p\;\le\;\sum_{i=1}^M\sum_{j=1}^K
\bigl\|T^{(ij)}\bigr\|_p^p,\qquad
T^{(ij)}:=P_{(cR^{(i)})^c}\;Q_{Q^{(j)}-\beta^{(j)}}\;P_{cR^{(i)}},
\]
where $\beta^{(j)}$ is the centre of $Q^{(j)}$, so that
$Q^{(j)}-\beta^{(j)}=\prod_m(-b^{(j)}_m/2,\,b^{(j)}_m/2)$. Moreover each
$cR^{(i)}$ may be translated to $\prod_m(0,\ell^{(i)}_m)$ without changing
singular values.
\end{proposition}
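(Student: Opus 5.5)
The plan is to decompose $T$ into $MK$ pieces, one for each component pair, using linearity of $E\mapsto P_E$ and $E\mapsto Q_E$ over disjoint unions. Each piece is then compared with $T^{(ij)}$ through Lemma~\ref{lem:svcalc}(i) and (iv), and the pieces are summed with Rotfel'd subadditivity, Lemma~\ref{lem:svcalc}(ii).

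\emph{Decomposition.} The sets $cR^{(i)}$ are pairwise disjoint, so $\ind A=\sum_i\ind{cR^{(i)}}$ and hence $P_A=\sum_iP_{cR^{(i)}}$. The same holds for $B=\bigsqcup_jQ^{(j)}$, giving $Q_B=\sum_jQ_{Q^{(j)}}$, since $\F^{-1}(\cdot)\F$ is linear. This yields
\[
T=\sum_{i=1}^M\sum_{j=1}^K P_{A^c}\,Q_{Q^{(j)}}\,P_{cR^{(i)}} .
\]
Each summand is compact. Its kernel is $\ind{A^c}(x)\,g_j(x-y)\,\ind{cR^{(i)}}(y)$ with $g_j=\F^{-1}\ind{Q^{(j)}}\in L^2$, so the Hilbert--Schmidt estimate of Lemma~\ref{lem:compact} applies. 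The quasi-norms on both sides are therefore meaningful.

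\emph{Comparison with $T^{(ij)}$.} The complement $A^c$ is not the union of the $(cR^{(i)})^c$, so the pieces do not equal $T^{(ij)}$ directly. Instead, $A^c\subset(cR^{(i)})^c$ gives $P_{A^c}=P_{A^c}P_{(cR^{(i)})^c}$, and hence
\[
P_{A^c}Q_{Q^{(j)}}P_{cR^{(i)}}=P_{A^c}\cdot\bigl(P_{(cR^{(i)})^c}Q_{Q^{(j)}}P_{cR^{(i)}}\bigr).
\]
Lemma~\ref{lem:svcalc}(i) with $\|P_{A^c}\|\le1$ bounds each singular value of the left side by the corresponding one of the bracketed operator.

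Next the frequency box is centred. Let $M_\beta$ be multiplication by $e^{2\pi i\beta\cdot x}$. Then $\F M_\beta\F^{-1}$ is translation by $\beta$, so
\[
Q_{Q^{(j)}}=M_{\beta^{(j)}}\,Q_{Q^{(j)}-\beta^{(j)}}\,M_{-\beta^{(j)}} .
\]
Since $M_{\pm\beta}$ commutes with every $P_E$, the bracketed operator equals $M_{\beta^{(j)}}T^{(ij)}M_{-\beta^{(j)}}$. By Lemma~\ref{lem:svcalc}(iv) it has the same singular values as $T^{(ij)}$. Raising to the power $p$ and applying Lemma~\ref{lem:svcalc}(ii) to the $MK$-term sum then gives the displayed inequality.

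\emph{Translation to $\prod_m(0,\ell^{(i)}_m)$.} For the final claim, let $\tau_v$ be the unitary translation by $v$. It commutes with $Q_E$ for every $E$, because $Q_E$ is a Fourier multiplier, and it satisfies $\tau_vP_E\tau_v^{-1}=P_{E+v}$. Hence $\tau_vT^{(ij)}\tau_v^{-1}$ is the same operator with $cR^{(i)}$ replaced by $cR^{(i)}+v$. Conjugation by a unitary preserves singular values, and choosing $v$ to move the lower corner of $cR^{(i)}$ to the origin gives the box $\prod_m(0,c\,a^{(i)}_m)$.

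No step here is a real obstacle. The only point needing care is the one above: $A^c$ must be handled by the factorization $P_{A^c}=P_{A^c}P_{(cR^{(i)})^c}$ together with the contraction estimate, not by trying to split $A^c$ itself into pieces.
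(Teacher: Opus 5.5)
Your proposal is correct and follows the paper's proof essentially step for step. It uses the same splitting of $P_A$ and $Q_B$ with Rotfel'd subadditivity, the same factorization $P_{A^c}=P_{A^c}P_{(cR^{(i)})^c}$ combined with Lemma~\ref{lem:svcalc}(i), the same modulation to centre the frequency box via Lemma~\ref{lem:svcalc}(iv), and the same translation argument, with your Hilbert--Schmidt check of compactness of each piece making explicit a detail the paper leaves implicit.
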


\begin{proof}
As $\ind B=\sum_j\ind{Q^{(j)}}$ a.e., $Q_B=\sum_jQ_{Q^{(j)}}$, and as
$\ind A=\sum_i\ind{cR^{(i)}}$, $P_A=\sum_iP_{cR^{(i)}}$; Rotfel'd
(Lemma~\ref{lem:svcalc}(ii)) gives
$\|T\|_p^p\le\sum_{i,j}\|P_{A^c}Q_{Q^{(j)}}P_{cR^{(i)}}\|_p^p$. Since
$cR^{(i)}\subset A$ we have $A^c\subset(cR^{(i)})^c$, hence
$P_{A^c}=P_{A^c}P_{(cR^{(i)})^c}$ and, by Lemma~\ref{lem:svcalc}(i),
$s_n(P_{A^c}Q_{Q^{(j)}}P_{cR^{(i)}})\le
s_n(P_{(cR^{(i)})^c}Q_{Q^{(j)}}P_{cR^{(i)}})$. The modulation
$(M_\beta f)(x)=e^{2\pi i\beta\cdot x}f(x)$ is unitary, commutes with every
spatial projection, and conjugates $Q_E$ to $Q_{E-\beta}$; with
$\beta=\beta^{(j)}$ this centres the frequency box while leaving the
spatial projections untouched, so the singular values equal those of
$T^{(ij)}$. Translations are handled as noted after
Proposition~\ref{prop:1d}.
\end{proof}

\begin{proposition}[Telescoping tensorization]\label{prop:tensor}
Let
\[
R=\prod_{m=1}^d(0,\ell_m),\qquad
B^\circ=\prod_{m=1}^d(-b_m/2,b_m/2),\qquad
T^\circ=P_{R^c}Q_{B^\circ}P_R
\]
on $L^2(\R^d)$. Under the canonical identification of $L^2(\R^d)$ with
$\bigotimes_{m=1}^dL^2(\R)$,
\[
T^\circ=\sum_{k=1}^d\Theta_k,\qquad
\Theta_k=\Bigl[\bigotimes_{m<k}S_m\Bigr]\otimes\widetilde T_k\otimes
\Bigl[\bigotimes_{m>k}Q_{B^\circ_m}P_{(0,\ell_m)}\Bigr],
\]
where, in the $m$-th coordinate, $B^\circ_m=(-b_m/2,b_m/2)$,
\[
S_m=P_{(0,\ell_m)}Q_{B^\circ_m}P_{(0,\ell_m)},\qquad
\widetilde T_k=P_{(0,\ell_k)^c}Q_{B^\circ_k}P_{(0,\ell_k)} .
\]
Consequently, by Lemmas~\ref{lem:svcalc}(ii) and \ref{lem:tensor},
\begin{equation}\label{eq:assemble}
\|T^\circ\|_p^p\;\le\;\sum_{k=1}^d\;
\bigl\|\widetilde T_k\bigr\|_p^p\,
\prod_{m<k}\|S_m\|_p^p\,
\prod_{m>k}\bigl\|Q_{B^\circ_m}P_{(0,\ell_m)}\bigr\|_p^p .
\end{equation}
\end{proposition}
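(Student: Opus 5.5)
The plan is to prove the operator identity $T^\circ=\sum_k\Theta_k$ directly, and then read off \eqref{eq:assemble} from Rotfel'd subadditivity and tensor multiplicativity. Two structural facts drive the identity. First, because $B^\circ$ is a product box, $\ind{B^\circ}(\xi)=\prod_m\ind{B^\circ_m}(\xi_m)$, and $\F$ on $L^2(\R^d)$ is the tensor product of the one-dimensional transforms. Hence, under the identification $L^2(\R^d)\cong\bigotimes_mL^2(\R)$,
\[
Q_{B^\circ}=\bigotimes_{m=1}^dQ_{B^\circ_m}.
\]
This can be checked on elementary tensors $f_1\otimes\cdots\otimes f_d$ and extended by density, since both sides are bounded. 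In the same way $P_R=\bigotimes_mP_{(0,\ell_m)}$. Second, multiplying the pointwise telescoping identity \eqref{eq:telescope} by the multiplier identities gives
\[
P_{R^c}=\sum_{k=1}^d\Bigl[\bigotimes_{m<k}P_{(0,\ell_m)}\Bigr]\otimes P_{(0,\ell_k)^c}\otimes\Bigl[\bigotimes_{m>k}I\Bigr].
\]
The telescoping identity itself follows by induction on $d$: write $1-\prod_{m\le d}x_m=(1-\prod_{m<d}x_m)+\prod_{m<d}x_m(1-x_d)$, where $x_m$ stands for $\ind{(0,\ell_m)}(x_m)$.

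Composing this decomposition with $Q_{B^\circ}P_R=\bigotimes_m Q_{B^\circ_m}P_{(0,\ell_m)}$ and using $(A_1\otimes\cdots)(B_1\otimes\cdots)=A_1B_1\otimes\cdots$ on each summand, the $k$-th summand has the following factors:
\begin{itemize}
\item for $m<k$, $P_{(0,\ell_m)}Q_{B^\circ_m}P_{(0,\ell_m)}=S_m$;
\item for $m=k$, $P_{(0,\ell_k)^c}Q_{B^\circ_k}P_{(0,\ell_k)}=\widetilde T_k$;
\item for $m>k$, $Q_{B^\circ_m}P_{(0,\ell_m)}$.
\end{itemize}
This is exactly $\Theta_k$.

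For \eqref{eq:assemble}, apply Lemma~\ref{lem:svcalc}(ii) iteratively to the $d$ summands, which gives $\|T^\circ\|_p^p\le\sum_k\|\Theta_k\|_p^p$. Then apply Lemma~\ref{lem:tensor} to each $\Theta_k$ to factor its quasi-norm as the product of the quasi-norms of its factors.

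The one point needing care is that Lemma~\ref{lem:tensor} is stated for compact factors. The factors here, $S_m$, $\widetilde T_k$ and $Q_{B^\circ_m}P_{(0,\ell_m)}$, are each Hilbert--Schmidt, because their kernels are $\ind{}\cdot g(x-y)\cdot\ind{}$ with $g\in L^2$ and the input set is bounded, as in Lemma~\ref{lem:compact}. So the lemma applies, with the convention that the identity holds in $[0,\infty]$. Finiteness of the individual terms is not needed for the inequality: if some factor has infinite quasi-norm, the right side is $+\infty$ and there is nothing to prove. One should still note that the product $0\cdot\infty$ cannot arise, since every factor is nonzero.

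I expect the only real obstacle to be bookkeeping: making the tensor identification rigorous. Concretely, this means checking that the unitary $L^2(\R^d)\cong\bigotimes L^2(\R)$ intertwines the $d$-dimensional Fourier multipliers and spatial projections with the corresponding tensor products. I would handle it by verifying the identity on the total set of elementary tensors of Schwartz functions, where $\F(f_1\otimes\cdots\otimes f_d)=\F f_1\otimes\cdots\otimes\F f_d$ is Fubini, and then extending by boundedness.
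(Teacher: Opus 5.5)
Your proof is correct and follows essentially the paper's argument: you tensorize $Q_{B^\circ}$ and $P_R$, telescope $P_{R^c}$ into the $d$ projections $\Pi_k$, compose slot by slot to identify $\Theta_k$, and then apply Rotfel'd over the $d$ terms and tensor multiplicativity to each $\Theta_k$. The only difference is cosmetic: before invoking Lemma~\ref{lem:tensor}, the paper also cites Proposition~\ref{prop:1d} and Lemma~\ref{lem:mass} for the $p$-summability of each factor, whereas you correctly note that the lemma's $+\infty$ convention, together with all factors being nonzero, makes this unnecessary for the inequality.
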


\begin{proof}
Under the identification, $\F_d=\bigotimes_m\F_1$, hence for the product
set $B^\circ$, $Q_{B^\circ}=\bigotimes_mQ_{B^\circ_m}$; likewise
$P_R=\bigotimes_mP_{(0,\ell_m)}$. Writing $u_m=\ind{(0,\ell_m)}(x_m)$, the
telescoping identity
$1-\prod_{m\le d}u_m=\sum_{k=1}^d\bigl(\prod_{m<k}u_m\bigr)(1-u_k)$
(immediate by induction: consecutive partial products differ by exactly the
$k$-th term) gives
\[
P_{R^c}=\sum_{k=1}^d\Pi_k,\qquad
\Pi_k=\Bigl[\bigotimes_{m<k}P_{(0,\ell_m)}\Bigr]\otimes
P_{(0,\ell_k)^c}\otimes\Bigl[\bigotimes_{m>k}I\Bigr].
\]
Since composition respects elementary tensors,
$\Theta_k=\Pi_kQ_{B^\circ}P_R$ has exactly the stated factors:
$P_{(0,\ell_m)}Q_{B^\circ_m}P_{(0,\ell_m)}=S_m$ for $m<k$;
$P_{(0,\ell_k)^c}Q_{B^\circ_k}P_{(0,\ell_k)}=\widetilde T_k$ in slot $k$;
and $I\cdot Q_{B^\circ_m}P_{(0,\ell_m)}$ for $m>k$. Each factor is
Hilbert--Schmidt (kernels $\ind{}\,g_m(x-y)\ind{}$ with
$g_m=\F^{-1}\ind{B^\circ_m}\in L^2$), hence compact, and each is
$p$-summable by Proposition~\ref{prop:1d} and Lemma~\ref{lem:mass} below, so
Lemma~\ref{lem:tensor} applies; \eqref{eq:assemble} follows by Rotfel'd
over the $d$ terms.
\end{proof}

\begin{remark}
The projections $\Pi_k$ are mutually orthogonal
($\Pi_k\Pi_{k'}=0$ for $k\ne k'$, because the $k$-th slots carry
$P_{(0,\ell_k)^c}P_{(0,\ell_k)}=0$ when $k<k'$), so \eqref{eq:telescope}
is in fact an orthogonal decomposition of $\ind{R^c}$; we only use
subadditivity.
\end{remark}

\section{\texorpdfstring{Tangential mass at
$q\downarrow0$}{Tangential mass as q goes to 0}}\label{sec:mass}

The factors in \eqref{eq:assemble} other than $\widetilde T_k$ are powers
of one-dimensional localization spectra:
$\|S_m\|_p^p=\sum_n\lambda_n(S_m)^p$ and, since
$(Q_{B^\circ_m}P_{(0,\ell_m)})^*(Q_{B^\circ_m}P_{(0,\ell_m)})
=P_{(0,\ell_m)}Q_{B^\circ_m}P_{(0,\ell_m)}=S_m$,
\[
\bigl\|Q_{B^\circ_m}P_{(0,\ell_m)}\bigr\|_p^p
=\sum_n\lambda_n(S_m)^{p/2}.
\]
As $0\le\lambda_n\le1$ and $p\ge p/2$, both are bounded by
$N_m:=\sum_n\lambda_n(S_m)^{p/2}$. What is needed is an estimate on $N_m$ of
a particular shape: a trace term plus a controlled plunge term, at exponents
tending to $0$. Lemma~\ref{lem:mass} supplies one, with explicit constants,
in two lines from Proposition~\ref{prop:1d} and Markov's inequality. It is
elementary, and no priority is claimed for it.

\begin{lemma}[Tangential Schatten mass]\label{lem:mass}
Let $I$ be an interval of length $\ell$, $B^\circ=(-b/2,b/2)$, and
$S=P_IQ_{B^\circ}P_I$. Then for every $0<q\le\frac12$,
\[
\sum_{n\ge1}\lambda_n(S)^{\,q}\;\le\;
2\,\ell b\;+\;\frac{\sqrt2}{q}\,
\Bigl[\pi e\,b+12.5+4.5\,\logp(2q\ell)\Bigr].
\]
\end{lemma}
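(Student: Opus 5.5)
The plan is to split the eigenvalues of $S$ at the plunge midpoint $\tfrac12$ and to bound the two halves by two different mechanisms: the trace for the upper half, and the off-diagonal factor of Lemma~\ref{lem:compact} together with Proposition~\ref{prop:1d} for the lower half. The two terms of the claimed bound correspond exactly to these two halves.

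\emph{Upper half.} Since $0\le\lambda_n\le1$, each eigenvalue with $\lambda_n\ge\tfrac12$ contributes at most $1$ to $\sum_n\lambda_n^q$. Their number is at most $2\sum_n\lambda_n=2\Tr S$. The kernel of $S$ is $\ind I(x)\,g(x-y)\,\ind I(y)$ with $g=\F^{-1}\ind{B^\circ}$, so its diagonal value is $g(0)=b$. Hence $\Tr S=\ell b$, which is legitimate because $S\ge0$ is trace class with a continuous kernel. This gives the term $2\ell b$.

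\emph{Lower half.} For $\lambda_n<\tfrac12$ we have $1-\lambda_n>\tfrac12$, so
\[
\lambda_n\le2\lambda_n(1-\lambda_n)=2\varphi_n .
\]
By Lemma~\ref{lem:compact}, $S-S^2=T^*T$ with $T=P_{I^c}Q_{B^\circ}P_I$. Therefore $\varphi_n=s_n(T)^2$, with the eigenvalues of $S$ and the singular values of $T$ indexed compatibly through the common eigenbasis. It follows that
\[
\sum_{\lambda_n<1/2}\lambda_n^{\,q}\;\le\;2^q\sum_n s_n(T)^{2q}\;=\;2^q\,\|T\|_{2q}^{2q}.
\]
Because $q\le\tfrac12$, the exponent $p=2q$ lies in $(0,1]$, so Proposition~\ref{prop:1d} applies with $p=2q$. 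It bounds $\|T\|_{2q}^{2q}$ by
\[
\frac{2}{2q}\Bigl[\pi e\,b+12.5+4.5\,\logp(2q\ell)\Bigr]
=\frac1q\Bigl[\pi e\,b+12.5+4.5\,\logp(2q\ell)\Bigr].
\]
Finally $2^q\le\sqrt2$ for $q\le\tfrac12$, which yields the second term. Adding the two halves proves the lemma.

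\emph{Main obstacle.} There is no real obstacle here. The only points needing care are the bookkeeping that matches eigenvalues of $S$ with singular values of $T$, which is a spectral-mapping argument since $S$ and $T^*T$ commute, and checking that the constraint $q\le\tfrac12$ is exactly what keeps $p=2q$ inside the range of Proposition~\ref{prop:1d} and turns $2^q$ into the constant $\sqrt2$. In particular, Markov's inequality is not actually needed in the form used above.
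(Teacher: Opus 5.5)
Your proof is correct and is essentially the paper's argument. Both split the spectrum at $\tfrac12$, bound the head by $\#\{\lambda_n\ge\tfrac12\}\le 2\Tr S=2\ell b$, and control the tail through $\lambda_n\le2\lambda_n(1-\lambda_n)$, the sub-multiset relation with $\{s_m(T)^2\}$, and Proposition~\ref{prop:1d} at exponent $2q$. The differences are cosmetic: the paper computes $\Tr S$ as $\|Q_{B^\circ}P_I\|_{\HS}^2$ via Plancherel rather than from the kernel diagonal, and your head-count step $\#\{\lambda_n\ge\tfrac12\}\le2\Tr S$ is itself the Markov inequality the paper invokes, so Markov is in fact used.
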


\begin{proof}
Split at $\frac12$. For the head, Markov on the trace: since
$\Tr S=\|Q_{B^\circ}P_I\|_\HS^2
=\iint\ind I(y)|g(x-y)|^2\,dx\,dy=\ell\,\|g\|_{L^2}^2=\ell b$
(Plancherel, $g=\F^{-1}\ind{B^\circ}$),
\[
\#\{n:\lambda_n>\tfrac12\}\le2\Tr S=2\ell b,
\qquad\text{hence}\qquad
\sum_{\lambda_n>1/2}\lambda_n^q\le2\ell b .
\]
For the tail, use the off-diagonal factor
$\widetilde T=P_{I^c}Q_{B^\circ}P_I$: by Lemma~\ref{lem:compact} (in
$d=1$), $S-S^2=\widetilde T^*\widetilde T$, so the nonzero spectrum of
$S-S^2$ is $\{s_m(\widetilde T)^2\}$ with multiplicity, while by spectral
mapping it consists of the values $\lambda_n(1-\lambda_n)$. If
$0<\lambda_n\le\frac12$ then $\lambda_n\le2\lambda_n(1-\lambda_n)$, and the
values $\lambda_n(1-\lambda_n)$ over such $n$ form a sub-multiset of
$\{s_m(\widetilde T)^2\}$; therefore, for $0<q\le\frac12$ (so that
$2q\le1$),
\[
\sum_{\lambda_n\le1/2}\lambda_n^q
\;\le\;2^q\sum_{\lambda_n\le1/2}\bigl(\lambda_n(1-\lambda_n)\bigr)^q
\;\le\;\sqrt2\;\sum_m s_m(\widetilde T)^{2q}
=\sqrt2\,\bigl\|\widetilde T\bigr\|_{2q}^{2q}.
\]
Proposition~\ref{prop:1d} at exponent $2q\in(0,1]$ gives
$\|\widetilde T\|_{2q}^{2q}\le\frac{2}{2q}
[\pi e\,b+12.5+4.5\logp(2q\ell)]$. Combining the two parts proves the
lemma.
\end{proof}

\begin{remark}
The lemma is the quantitative form of the \emph{area law for quasi-norms}:
as $q\downarrow0$, $\lambda^q\to\ind{\lambda>0}$ pointwise, and the bound
says that at the self-tuned exponent the effective rank of a
one-dimensional localization factor is its time--bandwidth area $\ell b$ up
to a single plunge correction
$O\bigl(\Ltil\,(1+\logp(\ell/\Ltil))\bigr)$.
This rank-like (rather than log-like) behaviour of the tangential factors is
what converts quasi-norm \emph{multiplication} into the surface factor
$c^{d-1}$ instead of an extra logarithm.
\end{remark}

\section{\texorpdfstring{Upper bound: proof of
Theorem~\ref{thm:upper}}{Upper bound: proof of Theorem 1.2}}\label{sec:upper}

\begin{proof}[Proof of Theorem~\ref{thm:upper}]
Fix $0<\eps<\frac12$ and set $t=\sqrt{\eps(1-\eps)}$,
$\Ltil=\ln\frac1{\eps(1-\eps)}$, $p=1/\Ltil$. Since
$\eps(1-\eps)<\frac14$, $\Ltil>\ln4>1$ and $p\in(0,1)$; moreover
$t^{-p}=e^{1/2}$. By Lemma~\ref{lem:compact} and Markov
(Lemma~\ref{lem:svcalc}(iii)),
$\Lambda_\eps\le n(t;T)\le e^{1/2}\|T\|_p^p$. By
Propositions~\ref{prop:pairs} and \ref{prop:tensor},
\[
\|T\|_p^p\;\le\;\sum_{i,j}\sum_{k=1}^d
\bigl\|\widetilde T^{(ij)}_k\bigr\|_p^p\prod_{m\neq k}N^{(ij)}_m,
\qquad N^{(ij)}_m=\sum_n\lambda_n\bigl(S^{(ij)}_m\bigr)^{p/2},
\]
where in the pair $(i,j)$ the $m$-th coordinate data are
$(\ell^{(i)}_m,b^{(j)}_m)$. Proposition~\ref{prop:1d} gives
$\|\widetilde T^{(ij)}_k\|_p^p\le\frac2p\,G^{(ij)}_k
=2\Ltil\,G^{(ij)}_k$ (note $\logp(\ell p)=\logp(\ell/\Ltil)$, matching
\eqref{eq:G}). Lemma~\ref{lem:mass} with $q=p/2\le\frac12$ gives
\[
N^{(ij)}_m\le2\ell^{(i)}_mb^{(j)}_m
+\frac{\sqrt2}{p/2}\,G^{(ij)}_m
=2\ell^{(i)}_mb^{(j)}_m+2\sqrt2\,\Ltil\,G^{(ij)}_m=H^{(ij)}_m,
\]
again using $\logp(2\cdot\frac p2\,\ell)=\logp(\ell/\Ltil)$. Combining,
\[
\Lambda_\eps\le e^{1/2}\sum_{i,j}\sum_{k=1}^d
2\Ltil\,G^{(ij)}_k\prod_{m\neq k}H^{(ij)}_m,
\]
which is the assertion.
\end{proof}

\begin{proof}[Proof of Corollary~\ref{cor:kdl}]
Let $c\ge2$, $\alpha\ge4$ and $\alpha^{-c}<\eps<\frac12$; these are the
hypotheses under which \eqref{eq:Lchain} holds in full, so that
$L=\ln(1/\eps)\in(\ln2,\,c\ln\alpha)$ and
$L\le\Ltil\le L+\ln2\le2L\le2c\ln\alpha$. Throughout, drop the indices
$(i,j)$ and write $a=a^{(i)}_m$ etc.; also
$4.5\logp x=\frac{4.5}{\ln2}\lnp x\le6.5\lnp x$.

\emph{Each $H_m\le\kappa_m c$.} Indeed
$2\ell_mb_m=2a_mb_m\,c$;
$2\sqrt2\,\Ltil(\pi e\,b_m+12.5)\le4\sqrt2\,\ln\alpha\,
(\pi e\,b_m+12.5)\,c\le6\ln\alpha(\pi e\,b_m+12.5)\,c$;
and since $x\mapsto x\lnp(K/x)\le K/e$ for all $x>0$,
\[
2\sqrt2\cdot6.5\;\Ltil\,\lnp\!\frac{ca_m}{\Ltil}
\;\le\;18.4\cdot\frac{ca_m}{e}\;\le\;7\,a_m\,c .
\]

\emph{Each $2e^{1/2}\Ltil G_k\le\gamma_k\,L\ln(\alpha c/L)$.} Put
$X=\ln(\alpha c/L)$. Since $\alpha/\ln\alpha$ is increasing for
$\alpha\ge e$ and $L<c\ln\alpha$,
$X>\ln(\alpha/\ln\alpha)\ge\ln(4/\ln4)=:\eta=1.0596\ldots$ Next, since
$\Ltil\ge L$,
\[
\lnp\frac{ca_k}{\Ltil}\le\lnp\frac{ca_k}{L}
=\lnp\Bigl(\frac{\alpha c}{L}\cdot\frac{a_k}{\alpha}\Bigr)
\le X+\lnp a_k\le X\Bigl(1+\frac{\lnp a_k}{\eta}\Bigr),
\]
and $\pi e\,b_k+12.5\le\frac{X}{\eta}(\pi e\,b_k+12.5)$. Hence, with
$\Ltil\le2L$ and $2e^{1/2}\le3.3$,
\[
2e^{1/2}\,\Ltil\,G_k
\le6.6\,L\,X\Bigl[\frac{\pi e\,b_k+12.5}{\eta}
+6.5\Bigl(1+\frac{\lnp a_k}{\eta}\Bigr)\Bigr]
\le L\,X\,\Bigl[6.3(\pi e\,b_k+13)+41\bigl(1+\lnp a_k\bigr)\Bigr],
\]
i.e.\ $2e^{1/2}\Ltil G_k\le\gamma_k LX$. (Constants: $6.6/\eta\le6.3$
handles the $b_k$ term; for the remainder,
$6.6\cdot12.5/\eta+6.6\cdot6.5=120.8\le6.3\cdot13+41=122.9$ and
$6.6\cdot6.5/\eta=40.5\le41$.) Substituting both displays into
Theorem~\ref{thm:upper},
\[
\Lambda_\eps\le\sum_{i,j}\sum_k\gamma^{(ij)}_k\,
\Bigl[\prod_{m\neq k}\kappa^{(ij)}_m\Bigr]\,c^{\,d-1}\,
L\,\ln\frac{\alpha c}{L},
\]
which is the corollary.
\end{proof}

\begin{proof}[Proof of Corollary~\ref{cor:matched}]
Take $\alpha=4$ in Corollary~\ref{cor:kdl}, whose hypotheses then read
$c\ge2$ and $4^{-c}<\eps<\frac12$:
\[
\Lambda_\eps(cA_0,B_0)\;\le\;C\bigr|_{\alpha=4}\;c^{\,d-1}\,L\,
\ln\frac{4c}{L}.
\]
It remains to replace $L\ln(4c/L)$ by $\Ltil\log_2(4c/\Ltil)$. Write
$Y=\log_2\frac{4c}{\Ltil}$, so that $\ln\frac{4c}{\Ltil}=Y\ln2$.

First, $\Ltil\le2L$ by \eqref{eq:Lchain}, so
$\frac{4c}{L}=\frac{4c}{\Ltil}\cdot\frac{\Ltil}{L}\le\frac{8c}{\Ltil}$ and
therefore
\[
\ln\frac{4c}{L}\;\le\;\ln\frac{4c}{\Ltil}+\ln2 .
\]
Second, the hypothesis $\eps>4^{-c}$ gives $L<c\ln4$, whence
$\frac{4c}{\Ltil}\ge\frac{4c}{2L}>\frac{4c}{2c\ln4}=\frac{2}{\ln4}$ and
\[
\ln\frac{4c}{\Ltil}\;>\;\ln\frac{2}{\ln4}=0.36651\ldots\;>\;0.3665 .
\]
The first display, rewritten multiplicatively and then estimated by the
second, gives
\[
\ln\frac{4c}{L}\;\le\;\Bigl(1+\frac{\ln2}{\ln(4c/\Ltil)}\Bigr)
\ln\frac{4c}{\Ltil}
\;\le\;\Bigl(1+\frac{\ln2}{0.3665}\Bigr)\ln\frac{4c}{\Ltil}
\;\le\;2.892\,\ln\frac{4c}{\Ltil}
\;=\;2.892\ln2\cdot Y\;\le\;2.005\,Y .
\]
Finally $L\le\Ltil$, again by \eqref{eq:Lchain}. Hence
$L\ln\frac{4c}{L}\le2.005\,\Ltil\,Y\le2.01\,\Ltil\,Y$, and the corollary
follows with $C'=2.01\,C\bigr|_{\alpha=4}$.
\end{proof}

\begin{remark}[Shape of the bound]
All logarithms above are natural; restating \eqref{eq:conj} in any other
fixed base changes only the constant. No factor in Theorem~\ref{thm:upper}
can be removed: for $A_0=B_0=[0,1]^d$, \cite[Thm.~1.1]{KDL} exhibits a
matching lower bound $\Lambda^\pm_\eps\gtrsim c^{d-1}LR$, proved there on the
deep polynomial range $\alpha_d^{-c}<\eps<c^{-\alpha_d}$. The lower-bound
half of that theorem carries the standing hypothesis $\eps>\alpha_d^{-c}$ of
the first half, since below $\alpha_d^{-c}$ the conclusion is replaced by
\cite[(1.3)]{KDL}. So the shape $c^{d-1}LR$ is attained in that range, and
Theorem~\ref{thm:upper} cannot be improved in shape there.
\end{remark}

\section{Unconditional lower bounds}\label{sec:lower}

Throughout this section the geometry is the model
box pair $A_0=B_0=(0,1)^d$ with $c=\ell$, for which
(Remark~\ref{rem:cube}) the concentration operator is the exact tensor
power $S_\ell^{\otimes d}$ of the one-dimensional operator \eqref{eq:Sell}.
Its eigenvalues are the $d$-fold products $\prod_{i=1}^d\lambda_{n_i}(\ell)$,
so that
\begin{equation}\label{eq:prodcount}
\Lambda_\eps(c)=\#\Bigl\{(n_1,\dots,n_d):\
\eps<\prod_{i=1}^d\lambda_{n_i}(\ell)<1-\eps\Bigr\},
\end{equation}
and every bound below is obtained by exhibiting a set of multi-indices whose
product lies in $(\eps,1-\eps)$. The task is therefore one-dimensional
throughout: to show that the sinc-kernel spectrum really does place many
eigenvalues in a fixed window. Sections~\ref{sec:tracestep}--\ref{sec:tier1}
do this from the exact trace identity together with the sine-kernel
determinant asymptotics of Appendix~\ref{app:exact};
Sections~\ref{sec:fixeddepth}--\ref{sec:edge} draw the consequences.

Every statement of this section is proved in one and the same sense: proved
here modulo the single quoted external theorem, the sine-kernel determinant
asymptotics of Theorem~\ref{thm:basorwidom}.
The two traces the window count consumes, at $m=2$ and $m=3$, are settled
with their constant terms in closed form in Appendix~\ref{app:exact}; the
higher traces at every other fixed order are settled by
Proposition~\ref{lem:higher} below, from the same machinery. What is not
proved, and is claimed nowhere, is any estimate uniform in the order $m$ or
in the depth $u$ of Section~\ref{sec:fixeddepth};
Remark~\ref{rem:mquant} locates that limitation.

\subsection{The trace identity and its two-sided bound}\label{sec:tracestep}

Write $\gamma_{\mathrm E}$ for Euler's constant and $\operatorname{Ci}$ for
the cosine integral.

\begin{proposition}[Escape identity and the $\Tr(S-S^2)$
asymptotics]\label{prop:trace}
For every $\ell>0$,
\begin{equation}\label{eq:escape}
\Tr(S_\ell-S_\ell^2)=\iint_{I\times I^c}\sinc(x-y)^2\,dx\,dy
=2\int_0^\ell g(x)\,dx,\qquad
g(x):=\int_x^\infty\sinc(t)^2\,dt .
\end{equation}
Consequently
\begin{equation}\label{eq:trphi}
\Tr(S_\ell-S_\ell^2)=\frac1{\pi^2}
\bigl(\ln\ell+\gamma_{\mathrm E}+\ln2\pi-\operatorname{Ci}(2\pi\ell)\bigr)
+2\ell\,g(\ell),
\end{equation}
and there are explicit constants with, for all $\ell\ge1$,
\begin{equation}\label{eq:trbounds}
\frac1{\pi^2}\ln\ell+0.22\ \le\ \Tr(S_\ell-S_\ell^2)\ \le\
\frac1{\pi^2}\ln\ell+0.47 .
\end{equation}
In particular $\Tr(S_\ell-S_\ell^2)=\frac1{\pi^2}\ln\ell+O(1)$, with leading
constant exactly $1/\pi^2$.
\end{proposition}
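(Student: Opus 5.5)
The plan is to compute the trace directly from the kernel, reduce it to a one-variable integral of the tail function $g$, evaluate that integral in closed form by one integration by parts, and then bound the two non-logarithmic pieces crudely. With unit band $b=1$ the operator $S_\ell=(QP)^*(QP)$ is trace class, since $QP$ is Hilbert--Schmidt with $\|QP\|_\HS^2=\ell$ by the computation in the proof of Lemma~\ref{lem:mass}; hence $\Tr S_\ell=\ell$. Also $\Tr S_\ell^2=\|S_\ell\|_\HS^2=\iint_{I\times I}\sinc(x-y)^2\,dx\,dy$.

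\emph{Escape identity.} By the second identity in \eqref{eq:proj}, $\ell=\int_I\int_\R\sinc(x-y)^2\,dy\,dx$. Subtracting $\Tr S_\ell^2$ gives the double integral over $I\times I^c$ in \eqref{eq:escape}; equivalently this is $\|P_{I^c}QP\|_\HS^2$, by Lemma~\ref{lem:compact}. Split $I^c=(-\infty,0]\cup[\ell,\infty)$. For $x\in I$ the inner integrals are $g(x)$ and $g(\ell-x)$, because $\sinc^2$ is even. The substitution $x\mapsto\ell-x$ turns the second contribution into the first, which yields $2\int_0^\ell g$.

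\emph{Closed form \eqref{eq:trphi}.} Integrate by parts with $g'(x)=-\sinc(x)^2$:
\[
\int_0^\ell g(x)\,dx=\ell g(\ell)+\int_0^\ell x\,\sinc(x)^2\,dx=\ell g(\ell)+\frac1{\pi^2}\int_0^\ell\frac{\sin^2(\pi x)}{x}\,dx .
\]
The boundary term at $0$ vanishes because $g(0)=\tfrac12$ is finite. Write $\sin^2(\pi x)=\tfrac12(1-\cos2\pi x)$ and substitute $u=2\pi x$. The last integral becomes $\tfrac12\int_0^{2\pi\ell}(1-\cos u)\,du/u$. By the standard representation $\operatorname{Ci}(z)=\gamma_{\mathrm E}+\ln z-\int_0^z(1-\cos u)\,du/u$, this equals $\tfrac12\bigl(\gamma_{\mathrm E}+\ln(2\pi\ell)-\operatorname{Ci}(2\pi\ell)\bigr)$. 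Doubling the whole expression gives \eqref{eq:trphi}.

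\emph{Two-sided bound \eqref{eq:trbounds}.} The main term is $\pi^{-2}\ln\ell+\pi^{-2}(\gamma_{\mathrm E}+\ln2\pi)\approx\pi^{-2}\ln\ell+0.2447$. Two remainders need uniform control for $\ell\ge1$.
\begin{itemize}[itemsep=1pt,topsep=3pt]
\item For $2\ell g(\ell)$, write $g(\ell)=\frac1{2\pi^2}\int_\ell^\infty(1-\cos2\pi t)\,t^{-2}\,dt=\frac1{2\pi^2\ell}-\frac1{2\pi^2}\int_\ell^\infty\cos(2\pi t)\,t^{-2}\,dt$. One integration by parts bounds the oscillatory integral by $1/(\pi\ell^2)$. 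Hence $2\ell g(\ell)\in\bigl[\pi^{-2}-\pi^{-3}\ell^{-1},\ \pi^{-2}+\pi^{-3}\ell^{-1}\bigr]\subset[0.069,\,0.134]$.
\item For $\operatorname{Ci}(z)=-\int_z^\infty\cos u\,du/u$, the same integration by parts gives $|\operatorname{Ci}(z)|\le 2/z$. At $z=2\pi\ell\ge2\pi$ this bounds the term $\pi^{-2}|\operatorname{Ci}(2\pi\ell)|$ by $0.033$.
\end{itemize}
Adding up, the constant term lies in $[0.2447+0.069-0.033,\ 0.2447+0.134+0.033]\approx[0.28,\,0.41]$. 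This sits comfortably inside $[0.22,0.47]$.

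The only step that demands care is this numerical bookkeeping: the oscillatory remainder bounds must hold uniformly down to $\ell=1$, and they do with room to spare. Everything else is exact, and the leading coefficient $1/\pi^2$ is read off directly from \eqref{eq:trphi}.
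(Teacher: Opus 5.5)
Your proof is correct and follows the paper's argument step for step: the escape identity via \eqref{eq:proj}, the split of $I^c$ into the two tails $g(x)+g(\ell-x)$, and the single integration by parts leading to the cosine integral are all as in the paper. The only difference is in the final numerical bounds. The paper uses the cruder $0\le2\ell g(\ell)\le2/\pi^2$ together with $|\operatorname{Ci}(z)|\le1/z$, which it states without proof, and lands in $[0.2286,\,0.4635]$. Your extra integration by parts gives the self-contained estimates $2\ell g(\ell)=\pi^{-2}+O(\pi^{-3}\ell^{-1})$ and $|\operatorname{Ci}(z)|\le2/z$, and the tighter interval $[0.28,\,0.41]$. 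The sharper $g$-estimate is actually needed on your route: pairing your weaker $\operatorname{Ci}$ bound with the paper's crude $g$ bound would give about $0.48$ and overshoot $0.47$.
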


\begin{proof}
$\Tr S_\ell=\int_Iq(x,x)\,dx=\ell$ and
$\Tr S_\ell^2=\iint_{I\times I}q(x,y)^2\,dx\,dy$. By the second identity in
\eqref{eq:proj}, $\ell=\int_I\bigl(\int_\R q(x,y)^2dy\bigr)dx$, so
$\Tr(S_\ell-S_\ell^2)=\int_I\int_{I^c}q(x,y)^2\,dy\,dx$, which is the first
equality in \eqref{eq:escape}. Splitting
$I^c=(-\infty,0)\cup(\ell,\infty)$ and substituting $t=x-y$, resp.\
$t=y-x$, gives $\int_{I^c}q(x,y)^2dy=g(x)+g(\ell-x)$, and
$\int_0^\ell[g(x)+g(\ell-x)]dx=2\int_0^\ell g$. Integrating by parts with
$g'(x)=-\sinc(x)^2$,
\[
\int_0^\ell g=\ell\,g(\ell)+\int_0^\ell x\,\sinc(x)^2dx
=\ell g(\ell)+\frac1{\pi^2}\int_0^\ell\frac{\sin^2\pi x}{x}\,dx
=\ell g(\ell)+\frac1{2\pi^2}
\bigl(\gamma_{\mathrm E}+\ln2\pi\ell-\operatorname{Ci}(2\pi\ell)\bigr),
\]
using
$\int_0^\ell\frac{1-\cos2\pi x}{2x}dx
=\frac12(\gamma_{\mathrm E}+\ln2\pi\ell-\operatorname{Ci}(2\pi\ell))$;
doubling gives \eqref{eq:trphi}. For \eqref{eq:trbounds}:
$g(\ell)=\int_\ell^\infty\sinc^2\ge0$ and
$g(\ell)\le\int_\ell^\infty\frac{dt}{\pi^2t^2}=\frac1{\pi^2\ell}$, so
$0\le2\ell g(\ell)\le\frac2{\pi^2}$; also
$|\operatorname{Ci}(2\pi\ell)|\le\frac1{2\pi\ell}\le\frac1{2\pi}$ for
$\ell\ge1$, and $\gamma_{\mathrm E}+\ln2\pi=2.4151\ldots$ Hence the constant
term lies between
$\frac1{\pi^2}(2.4151-\frac1{2\pi})=0.22857\ldots$ and
$\frac1{\pi^2}(2.4151+\frac1{2\pi})+\frac2{\pi^2}=0.46347\ldots$; rounding
\emph{outward} to $[0.22,\,0.47]$ gives \eqref{eq:trbounds}.
\end{proof}

The interval in \eqref{eq:trbounds} is the one the proof establishes, and it
is two-sided because both endpoints are rounded outward. Its interior
contains the limiting value of the constant term. Indeed, in
\eqref{eq:trphi} $\operatorname{Ci}(2\pi\ell)\to0$ and
$2\ell g(\ell)\to\pi^{-2}$ as $\ell\to\infty$, so the constant term tends to
$(1+\gamma_{\mathrm E}+\ln2\pi)/\pi^2=0.3460\ldots$, and a limit is no bound
at finite $\ell$. The interval is not narrowed on its account.

\subsection{The moment count inequality}\label{sec:moment}

Since $\lambda\in(\frac14,\frac34)\iff\varphi=\lambda(1-\lambda)>\frac3{16}$,
the target count \eqref{eq:Ma} is $M_a=\#\{n:\varphi_n>\frac3{16}\}$, where
$\varphi_n=\lambda_n(1-\lambda_n)$ as in Section~\ref{sec:1d}. It is reached
by a polynomial that is dominated by the indicator of the $\varphi$-window
and whose traces are computable.

\begin{proposition}[Exact count inequality]\label{prop:count}
Let $\varrho(w)=16w^2(16w-3)=256w^3-48w^2$. Then
$\varrho(w)\le\ind{(3/16,\,1/4]}(w)$ for all $w\in[0,\frac14]$, and therefore
\begin{equation}\label{eq:countineq}
M_a\;=\;\#\{n:\lambda_n(\ell)\in(\tfrac14,\tfrac34)\}\ \ge\
256\,\Tr\bigl((S_\ell-S_\ell^2)^3\bigr)
-48\,\Tr\bigl((S_\ell-S_\ell^2)^2\bigr).
\end{equation}
\end{proposition}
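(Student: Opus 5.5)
The plan is a pointwise polynomial inequality on the spectral range of $\varphi=S_\ell-S_\ell^2$, summed over the spectrum. Note $\varrho(w)=16w^2(16w-3)$ has a double root at $0$ and a simple root at $3/16$.

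\emph{The minorant on $[0,\frac14]$.} On $[0,\frac3{16}]$ the factor $16w-3$ is $\le0$ and $16w^2\ge0$, so $\varrho(w)\le0$, which is the value of the indicator there. On $(\frac3{16},\frac14]$ the polynomial is increasing, since $\varrho'(w)=768w^2-96w=96w(8w-1)>0$ for $w>\frac18$. Hence $\varrho(w)\le\varrho(\frac14)=16\cdot\frac1{16}\cdot(4-3)=1$, which again equals the indicator. This proves $\varrho\le\ind{(3/16,1/4]}$ on $[0,\frac14]$.

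\emph{Summing over the spectrum.} By Section~\ref{sec:1d}, $\varphi\succeq0$ is compact and its eigenvalues (with multiplicity) are $\varphi_n=\lambda_n(1-\lambda_n)\in[0,\frac14]$. Two facts are needed here.

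1. $\varphi$ is trace class, since $\Tr\varphi<\infty$ by Proposition~\ref{prop:trace}. Hence $\varphi^2$ and $\varphi^3$ are trace class, with $\Tr\varphi^k=\sum_n\varphi_n^k$.

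2. Zero eigenvalues, and any part of $L^2(I)$ not captured by the listed $\lambda_n$, contribute $\varrho(0)=0$, so they do not affect the sum.

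Then
\[
256\Tr\varphi^3-48\Tr\varphi^2=\sum_n\varrho(\varphi_n)\le\sum_n\ind{(3/16,1/4]}(\varphi_n)=\#\{n:\varphi_n>\tfrac3{16}\}.
\]
The two series converge absolutely, so splitting the sum into its two traces is legitimate.

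\emph{Identifying the count.} For $\lambda\in[0,1]$, $\lambda(1-\lambda)>\frac3{16}$ holds exactly when $(\lambda-\frac14)(\lambda-\frac34)<0$, i.e.\ when $\lambda\in(\frac14,\frac34)$. So the right-hand side above is $M_a$, which gives \eqref{eq:countineq}.

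No step is a genuine obstacle. The only points needing care are the sign analysis on $(\frac3{16},\frac14]$ and the trace-class justification, and both are routine.
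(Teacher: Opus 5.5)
Your proof is correct and follows the paper's argument essentially step for step. You use the sign of $16w-3$ on $[0,\frac3{16}]$, then monotonicity via $\varrho'(w)=96w(8w-1)>0$ on $(\frac3{16},\frac14]$ up to $\varrho(\frac14)=1$, then evaluate at $w=\varphi_n$ and sum using $\Tr(\varphi^m)=\sum_n\varphi_n^m$. Your added remarks on trace class and on the equivalence $\lambda(1-\lambda)>\frac3{16}\iff\lambda\in(\frac14,\frac34)$ are the justifications the paper states only briefly, or places just before the proposition.
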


\begin{proof}
$\varrho(0)=0$, and $\varrho(w)=16w^2(16w-3)$ has the sign of $16w-3$ on
$(0,\frac14]$: thus $\varrho\le0$ on $[0,\frac3{16}]$ and $\varrho>0$ on
$(\frac3{16},\frac14]$. On the latter interval $\varrho'(w)=96w(8w-1)>0$
(as $w>\frac18$), so $\varrho$ increases from $0$ to
$\varrho(\frac14)=256\cdot\frac1{64}-48\cdot\frac1{16}=4-3=1$. Hence
$0\le\varrho\le1$ there and $\varrho\le\ind{(3/16,1/4]}$ everywhere on
$[0,\frac14]$. Evaluating at $w=\varphi_n\in[0,\frac14]$ and summing,
$\sum_n\varrho(\varphi_n)\le\#\{n:\varphi_n>\frac3{16}\}=M_a$; the left side
is $256\Tr(\varphi^3)-48\Tr(\varphi^2)$ because
$\Tr(\varphi^m)=\sum_n\varphi_n^m$.
\end{proof}

Inequality \eqref{eq:countineq} is exact. Of its two polynomial constraints,
$\varrho\le0$ on $[0,\frac3{16}]$ and $\varrho\le1$ on
$(\frac3{16},\frac14]$, only the second is attained, and only at the right
endpoint, where $\varrho(\frac14)=1$.

\subsection{The higher traces at every fixed order}\label{sec:higher}

The window count of Section~\ref{sec:tier1} consumes $\Tr(\varphi^m)$ at
$m=2$ and $m=3$ only, and Appendix~\ref{app:exact} settles those two with
their constant terms in closed form. The same expansion settles \emph{every}
fixed $m$, and it is included here because Section~\ref{sec:fixeddepth}
consumes it at larger $m$. The route is short: Lemma~\ref{lem:extract} of
Appendix~\ref{app:exact} gives $\Tr(S_\ell^{\,k})$ at every fixed $k$, and
$\Tr(\varphi^m)$ is a fixed finite combination of those.

\begin{proposition}[fixed-$m$ higher-trace asymptotics]\label{lem:higher}
Assume Theorem~\ref{thm:basorwidom}. For each fixed integer $m\ge1$,
\begin{equation}\label{eq:higherm}
\Tr\bigl((S_\ell-S_\ell^2)^m\bigr)
=\frac{\beta_m}{\pi^2}\ln\ell+O_m(1)\qquad(\ell\ge1),
\end{equation}
where
\[
\beta_m=B(m,m)=\frac{((m-1)!)^2}{(2m-1)!},
\qquad\text{so}\qquad
\beta_1=1,\quad\beta_2=\tfrac16,\quad\beta_3=\tfrac1{30},
\]
the $O_m(1)$ term being bounded uniformly in $\ell\ge1$ \emph{for each fixed
$m$}. No bound uniform in $m$ is asserted and none is proved; see
Remark~\ref{rem:mquant}.
\end{proposition}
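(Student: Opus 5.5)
The plan is to reduce everything to the power traces $\Tr(S_\ell^{\,k})$, which Lemma~\ref{lem:extract} of Appendix~\ref{app:exact} supplies at every fixed $k$ from Theorem~\ref{thm:basorwidom}. I expect that lemma to be usable in the form
\[
\Tr(S_\ell^{\,k})=\ell-\frac{H_{k-1}}{\pi^2}\ln\ell+O_k(1)\qquad(\ell\ge1),\qquad H_{k-1}=\sum_{j=1}^{k-1}\frac1j,
\]
with the $O_k(1)$ uniform in $\ell\ge1$ for each fixed $k$. This is the Landau--Widom two-term form for the monomial $f(t)=t^k$. Its coefficient of $\ln\ell/\pi^2$ is the functional
\[
\Phi(f)=\int_0^1\frac{f(t)-t\,f(1)}{t(1-t)}\,dt,
\]
and for $f(t)=t^k$ one gets $\int_0^1\frac{t^{k-1}-1}{1-t}\,dt=-H_{k-1}$. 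The case $k=1$ is consistent with $\Tr S_\ell=\ell$ exactly.

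Expand $\varphi^m=S_\ell^m(1-S_\ell)^m=\sum_{j=0}^m\binom mj(-1)^jS_\ell^{\,m+j}$. Since this is a finite combination of $m+1$ bounded trace-class operators, the trace passes through the sum. Substituting the two-term form for each $\Tr(S_\ell^{\,m+j})$ gives three contributions:
\begin{itemize}
\item The linear terms cancel, because $\sum_j\binom mj(-1)^j=0$ for $m\ge1$.
\item The remainders add to an $O_m(1)$, being finitely many $O_k(1)$ with $k\le2m$.
\item The logarithmic coefficient is $\Phi$ applied, by linearity, to $f(t)=t^m(1-t)^m$.
\end{itemize}
Since this $f$ vanishes at $1$, $\Phi(f)=\int_0^1t^{m-1}(1-t)^{m-1}\,dt=B(m,m)$. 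Hence $\Tr(\varphi^m)=\frac{\beta_m}{\pi^2}\ln\ell+O_m(1)$ with $\beta_m=B(m,m)$.

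As checks, $m=1$ gives $\beta_1=1$, matching Proposition~\ref{prop:trace}, and $\beta_2=\frac16$, $\beta_3=\frac1{30}$ agree with Appendix~\ref{app:exact}. For the record one can also verify the identity $-\sum_j(-1)^j\binom mjH_{m+j-1}=B(m,m)$ directly, using $H_{k-1}=\int_0^1\frac{1-t^{k-1}}{1-t}\,dt$ and summing under the integral.

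The main obstacle is the input step: confirming that Lemma~\ref{lem:extract} really delivers the logarithmic coefficient $-H_{k-1}/\pi^2$ with a remainder bounded uniformly in $\ell\ge1$ for each $k$, rather than only as $\ell\to\infty$. If it is stated only asymptotically, I would extend to the compact range $1\le\ell\le\ell_0$ by the trivial bound $0\le\Tr(S_\ell^{\,k})\le\Tr S_\ell=\ell$. The constants grow with $m$ through the binomial weights $\binom mj$, which is exactly why no uniformity in $m$ is claimed.
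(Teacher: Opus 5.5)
Your proposal is correct and follows the paper's proof essentially step for step. The shared steps are the binomial expansion of $\varphi^m$, Lemma~\ref{lem:extract} at the exponents $k=m,\dots,2m$, cancellation of the linear terms, and the logarithmic coefficient $B(m,m)$ from the integral representation of the harmonic numbers (your functional $\Phi$ repackages Lemma~\ref{lem:harmonic}). The differences are minor: on $1\le\ell\le\ell_1$ you bound each $\Tr(S_\ell^{\,k})$ by $\ell$, whereas the paper bounds $\Tr(\varphi^m)$ by $(\frac14)^{m-1}\Tr(\varphi)$ via \eqref{eq:trbounds}, and the non-uniformity in $m$ comes mainly from the Cauchy factor $(\frac{16}3)^{k}$ in Lemma~\ref{lem:extract}, not only from the binomial weights.
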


\begin{proof}
Fix $m\ge1$; every step is at that fixed $m$.

\emph{Step 1 (the exact finite combination).} $S_\ell$ commutes with
$I-S_\ell$, so $\varphi^m=\bigl(S_\ell(I-S_\ell)\bigr)^m
=S_\ell^{\,m}(I-S_\ell)^m$, and the binomial theorem in the commutative
algebra generated by $S_\ell$ gives
\begin{equation}\label{eq:phicomb}
\varphi^m=\sum_{j=0}^m(-1)^j\binom mj S_\ell^{\,m+j},
\qquad\text{hence}\qquad
\Tr(\varphi^m)=\sum_{j=0}^m(-1)^j\binom mj\Tr\bigl(S_\ell^{\,m+j}\bigr).
\end{equation}
Every term is finite: $S_\ell$ is trace class with $\Tr S_\ell=\ell$, and
$0\le S_\ell\le I$ gives $S_\ell^{\,k}\le S_\ell$, so
$\Tr(S_\ell^{\,k})\le\ell$ for every $k\ge1$. The sum has $m+1$ terms and its
highest power is $2m$; both are fixed, which is the whole point.

\emph{Step 2 (the range $\ell\ge\ell_1$).} Lemma~\ref{lem:extract} applies at
each of the $m+1$ fixed exponents $k=m,m+1,\dots,2m$ and gives, for
$\ell\ge\ell_1$,
\[
\Tr(S_\ell^{\,k})=\ell-\frac{\mathsf H_{k-1}}{\pi^2}\ln\ell+\mathsf c_k
+\mathsf R_k(\ell),
\qquad
|\mathsf R_k(\ell)|\le k\bigl(\tfrac{16}3\bigr)^k
\delta_\ell^{\,\vartheta}\mathsf M_\ell^{\,1-\vartheta},
\]
where $\ell_1\ge1$ is a threshold beyond which hypotheses~(i) and~(ii) of
that lemma both hold: (ii) holds for every $\ell\ge1$ by
Lemma~\ref{lem:crude}, and (i) is Theorem~\ref{thm:basorwidom}, with
$\delta_\ell=O(\ln\ell/\ell)$. Substituting into \eqref{eq:phicomb} splits
$\Tr(\varphi^m)$ into four sums.

\emph{The terms linear in $\ell$ cancel:} their coefficient is
$\sum_{j=0}^m(-1)^j\binom mj=(1-1)^m=0$, because $m\ge1$.

\emph{The logarithmic coefficient is exactly $\beta_m/\pi^2$:} it equals
$-\frac1{\pi^2}\sum_{j=0}^m(-1)^j\binom mj\mathsf H_{m+j-1}$, and
Lemma~\ref{lem:harmonic} of Appendix~\ref{app:exact} evaluates that
alternating sum as $-B(m,m)=-\beta_m$, giving $+\beta_m/\pi^2$.

\emph{The constant term is a fixed finite number,}
$\mathsf K_m:=\sum_{j=0}^m(-1)^j\binom mj\mathsf c_{m+j}$, each
$\mathsf c_k=-k\,[\sigma^k]\mathsf C(\sigma)$ being a Taylor coefficient of
the function $\mathsf C$, which is analytic on $|\sigma|<1$, as the proof
of Lemma~\ref{lem:extract} records, from the non-vanishing established in
the proof of Lemma~\ref{lem:crude}. So $\mathsf c_k$ is well defined at every
$k\ge1$, and $\mathsf K_m$ is a finite sum of $m+1$ of them.

\emph{The remainder tends to zero:} it is
$\sum_{j=0}^m(-1)^j\binom mj\mathsf R_{m+j}(\ell)$, of modulus at most
\[
\sum_{j=0}^m\binom mj(m+j)\bigl(\tfrac{16}3\bigr)^{m+j}
\delta_\ell^{\,\vartheta}\mathsf M_\ell^{\,1-\vartheta}
\ \le\ 2m\cdot2^m\bigl(\tfrac{16}3\bigr)^{2m}
\delta_\ell^{\,\vartheta}\mathsf M_\ell^{\,1-\vartheta}
\ =\ O_m\bigl(\ell^{-\vartheta+o(1)}\bigr),
\]
by \eqref{eq:starsub}. The prefactor is finite at each fixed $m$; it is the
only place $m$ enters, and it grows like $(16/3)^{2m}$.

Hence \eqref{eq:higherm} holds on $[\ell_1,\infty)$, with an $O_m(1)$ that in
fact converges to $\mathsf K_m$.

\emph{Step 3 (the range $1\le\ell\le\ell_1$).} No asymptotics are needed
here, and the extension is by a crude two-sided bound, not by continuity,
compactness or monotonicity. The eigenvalues of $\varphi$ lie in
$[0,\frac14]$, so $\varphi_n^{\,m}\le(\frac14)^{m-1}\varphi_n$ termwise and
\[
0\ \le\ \Tr(\varphi^m)\ \le\ \bigl(\tfrac14\bigr)^{m-1}\Tr(\varphi)
\ \le\ \bigl(\tfrac14\bigr)^{m-1}
\Bigl(\frac{\ln\ell}{\pi^2}+0.47\Bigr),
\]
the last step by \eqref{eq:trbounds}, which is valid for every $\ell\ge1$.
Since $0\le\ln\ell\le\ln\ell_1$ on $[1,\ell_1]$, the triangle inequality
gives there
\[
\Bigl|\Tr(\varphi^m)-\frac{\beta_m}{\pi^2}\ln\ell\Bigr|
\ \le\ \Bigl(\bigl(\tfrac14\bigr)^{m-1}+\beta_m\Bigr)
\frac{\ln\ell_1}{\pi^2}+\bigl(\tfrac14\bigr)^{m-1}\cdot0.47,
\]
a finite bound depending on $m$ and $\ell_1$ alone. Taking the larger of the
two bounds gives \eqref{eq:higherm} for every $\ell\ge1$.

\emph{Step 4 (consistency with the two evaluated cases).} At $m=1$,
$\beta_1=1$ and \eqref{eq:higherm} is Proposition~\ref{prop:trace}, proved
above independently of this appendix and with the explicit two-sided
constants \eqref{eq:trbounds}. At $m=2,3$, Lemma~\ref{lem:exacttrace}
evaluates the constants $\mathsf K_2,\mathsf K_3$ in closed form and sharpens
$O_m(1)$ to a power-saving remainder; its leading coefficients
$\frac1{6\pi^2}$ and $\frac1{30\pi^2}$ are $\beta_2/\pi^2$ and
$\beta_3/\pi^2$.
\end{proof}

\noindent
The Beta value in $\beta_m$ is not an accident of the harmonic-number
algebra, and it has a second reading. For $m\ge2$ the trace is
the convergent multiple sinc-integral
$\Tr(\varphi^m)=\int_{I^m}\prod_i\varphi(x_i,x_{i+1})$ with
$\varphi(x,z)=\int_{I^c}q(x,y)q(y,z)\,dy$; the $\ln\ell$ arises from the bulk
overlap exactly as for $m=1$, and the pushforward of the spectral measure to
$w=\varphi$ carries plunge weight $\frac{d\lambda}{\lambda(1-\lambda)}$,
giving
\[
\frac1{\pi^2}\int_0^1[\lambda(1-\lambda)]^{m-1}\,d\lambda
=\frac1{\pi^2}B(m,m)=\frac{\beta_m}{\pi^2}.
\]
That reading is a heuristic and is not what proves
Proposition~\ref{lem:higher}; Lemma~\ref{lem:harmonic} is. The two arrive at
the same Beta integral by routes sharing no step: one from the spectral side,
the other from the alternating sum of harmonic numbers.

\subsection{The one-dimensional window count}\label{sec:tier1}

\begin{theorem}[Window count]\label{thm:window}
There is an explicit constant $c_0=\frac{8}{15\pi^2}$, and a
finite constant $C_0$, with
\begin{equation}\label{eq:tier1}
M_a=\#\{n:\lambda_n(\ell)\in(\tfrac14,\tfrac34)\}\ \ge\ c_0\ln\ell-C_0
\qquad(\ell>0).
\end{equation}
\end{theorem}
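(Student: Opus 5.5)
The plan is to feed the fixed-order trace asymptotics into the exact count inequality of Proposition~\ref{prop:count}. By \eqref{eq:countineq},
\[
M_a\ \ge\ 256\,\Tr(\varphi^3)-48\,\Tr(\varphi^2),\qquad \varphi=S_\ell-S_\ell^2 .
\]
Proposition~\ref{lem:higher} at $m=2$ and $m=3$ (equivalently Lemma~\ref{lem:exacttrace} of Appendix~\ref{app:exact}, which sharpens the remainders) gives, for $\ell\ge1$,
\[
\Tr(\varphi^2)=\frac{1}{6\pi^2}\ln\ell+O(1),\qquad
\Tr(\varphi^3)=\frac{1}{30\pi^2}\ln\ell+O(1).
\]
Substituting these, the coefficient of $\ln\ell$ is
\[
\frac{256}{30\pi^2}-\frac{48}{6\pi^2}=\frac{1}{\pi^2}\Bigl(\frac{128}{15}-8\Bigr)=\frac{8}{15\pi^2}=c_0 .
\]
The two $O(1)$ remainders combine, with weights $256$ and $48$, into a single finite constant. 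This yields $M_a\ge c_0\ln\ell-C_0'$ on $\ell\ge1$.

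The order of steps is as follows.
\begin{enumerate}
\item Recall that $M_a=\#\{n:\varphi_n>3/16\}$ and apply \eqref{eq:countineq}.
\item Invoke the $m=2,3$ cases of Proposition~\ref{lem:higher}. On $[\ell_1,\infty)$ these come from Lemma~\ref{lem:extract} and Lemma~\ref{lem:harmonic} via Theorem~\ref{thm:basorwidom}. On $[1,\ell_1]$ they come from the crude bound $0\le\Tr(\varphi^m)\le 4^{1-m}\Tr\varphi$ together with \eqref{eq:trbounds}.
\item Do the arithmetic above.
\item Handle $0<\ell<1$ trivially. There $M_a\ge0$ and $c_0\ln\ell<0$, so any $C_0\ge0$ works. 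Taking $C_0=\max(C_0',0)$ covers all $\ell>0$.
\end{enumerate}

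The only genuinely nontrivial input is the exact logarithmic coefficients $\beta_2/\pi^2$ and $\beta_3/\pi^2$, rather than mere upper and lower bounds. A bound on $\Tr(\varphi^3)$ from below and on $\Tr(\varphi^2)$ from above with the correct leading constants is what the argument really needs, and the positive coefficient $c_0$ comes from a delicate cancellation ($128/15$ versus $8$). Any loss in either leading constant would destroy it. That is why the proof has to route through the Basor--Widom determinant asymptotics (Theorem~\ref{thm:basorwidom}), which make the $\ln\ell$ coefficients of $\Tr(S_\ell^k)$ exact for $k=2,\dots,6$.

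This is also why $C_0$ is not effective: the threshold $\ell_1$ and the remainder exponent come from the quoted asymptotics without explicit constants. Beyond citing those results, no further obstacle is expected; the remaining work is bookkeeping of the constant.
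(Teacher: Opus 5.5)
Your proposal is correct and follows the paper's proof: the count inequality \eqref{eq:countineq}, the $m=2,3$ trace asymptotics with exact leading coefficients, the arithmetic $256\beta_3-48\beta_2=\frac8{15}$, and the observation that $M_a\ge0$ handles small $\ell$. The only difference is the final bookkeeping: you absorb the uniform $O_m(1)$ remainders of Proposition~\ref{lem:higher} directly into $C_0$. The paper instead uses the closed-form constant $\mathsf B>0$ of Corollary~\ref{cor:Bconst} to swallow the remainder beyond a threshold $\ell_0$, and sets $C_0=c_0\ln\ell_0$ (Corollary~\ref{cor:windowexact}); both routes work.
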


\begin{proof}
The count inequality \eqref{eq:countineq} is unconditional, and it reduces
the theorem to the asymptotics of $\Tr(\varphi^2)$ and $\Tr(\varphi^3)$, the
only two traces the degree-three moment polynomial $256w^3-48w^2$ touches.
Both are supplied by Lemma~\ref{lem:exacttrace} of Appendix~\ref{app:exact},
which at $m=2,3$ gives more than Proposition~\ref{lem:higher} does: the
constant terms in closed form and a power-saving remainder,
\[
\Tr(\varphi^2)=\frac{\beta_2}{\pi^2}\ln\ell+\mathsf K_2+o(1),
\qquad
\Tr(\varphi^3)=\frac{\beta_3}{\pi^2}\ln\ell+\mathsf K_3+o(1),
\]
with $\beta_2=\frac16$, $\beta_3=\frac1{30}$ and with $\mathsf K_2$,
$\mathsf K_3$ in closed form, conditional on the cited determinant
asymptotics (Theorem~\ref{thm:basorwidom}) and on nothing else. Combining
with \eqref{eq:countineq},
\[
M_a\ \ge\ 256\Tr(\varphi^3)-48\Tr(\varphi^2)
=\frac{256\beta_3-48\beta_2}{\pi^2}\ln\ell+\mathsf B+o(1),
\]
where
$256\beta_3-48\beta_2=\frac{256}{30}-\frac{48}{6}=\frac{128}{15}-8=\frac8{15}$,
whence the leading constant is $c_0=\frac8{15\pi^2}$; this is
Corollary~\ref{cor:Bconst}. Corollary~\ref{cor:windowexact} converts that
asymptotic into \eqref{eq:tier1} for every $\ell>0$, with the same $c_0$ and
a finite $C_0$. (Because the left side of \eqref{eq:tier1}
is a nonnegative integer, the inequality holds automatically wherever
$c_0\ln\ell\le C_0$, so only a finite remainder constant is needed; that is
how Corollary~\ref{cor:windowexact} covers the compact range $\ell<\ell_0$
as well.)
\end{proof}

\noindent
The two constants have different standing, and the statement is worded to
keep them apart. The coefficient $c_0$ is exact and is given in closed form;
Remark~\ref{rem:c0status} says in what sense it is optimal and in what sense
it is not. The remainder constant $C_0$ is finite but \emph{not effective}.
By Corollary~\ref{cor:windowexact} it is $c_0\ln\ell_0$ for a threshold
$\ell_0$ beyond which the remainder of Lemma~\ref{lem:exacttrace} is at most
half of its constant term, and that threshold rests on an implied constant
which the cited source states to be uniform but does not quantify.

\begin{remark}[in what sense $c_0$ is, and is not, optimal]\label{rem:c0status}
Three constants must be kept apart, and only the first two are claimed here.

\emph{(1) The coefficient of the chosen polynomial.} Fixing
$\varrho(w)=256w^3-48w^2$ of Proposition~\ref{prop:count} forces
$\mathcal I[\varrho]=\pi^{-2}(256\beta_3-48\beta_2)=\frac8{15\pi^2}=c_0$
exactly, by \eqref{eq:Ifunctional}. Nothing is lost in passing from the
polynomial to the coefficient: $c_0$ is \emph{the exact coefficient delivered
by that degree-three minorant}, and Corollary~\ref{cor:windowexact} carries
it through to \eqref{eq:tier1} without shaving anything off it.

\emph{(2) The maximum over minorants of degree at most three.} This is again
$c_0$, and it is \emph{proved}, in Lemma~\ref{lem:degree3}: a minorant of
degree at most two yields at most $0$, and among minorants of degree at most
three admissible for $(\frac3{16},\frac14]$ the value $\frac8{15\pi^2}$ is
attained and not improvable. The proof is a three-term dual certificate,
\eqref{eq:dualcert3}, checkable by hand. So $c_0$ may fairly be called
optimal \emph{at degree three}, and that is the only optimality asserted for
it anywhere in this paper.

\emph{(3) The all-degree ceiling, which is also the fixed-window asymptotic
coefficient.} This is \emph{not} $c_0$. By Lemma~\ref{lem:minorant}, applied
with $b=\frac14$ as in Remark~\ref{rem:minorantendpoints}, the supremum of
$\mathcal I[\varrho]$ over \emph{all} admissible minorants, of any degree, is
\[
\mathcal I^\ast\bigl(\tfrac3{16},\tfrac14\bigr)
=\frac1{\pi^2}\int_{1/4}^{3/4}\frac{d\lambda}{\lambda(1-\lambda)}
=\frac{2\ln3}{\pi^2}=0.22262\ldots,
\]
larger than $c_0$ by the exact factor $\frac{15}4\ln3$. The same number
is the coefficient in the Landau--Widom asymptotics at this fixed window,
$M_a=\frac{2\ln3}{\pi^2}\ln\ell+o(\ln\ell)$ \cite{LandauWidom}, and the two
normalizations agree for a reason internal to this paper: the Landau--Widom
plunge density in the present scaling is
$\pi^{-2}\ln\ell\cdot d\lambda/(\lambda(1-\lambda))$, whose moment against
$[\lambda(1-\lambda)]^m$ is
$\pi^{-2}\ln\ell\int_0^1[\lambda(1-\lambda)]^{m-1}d\lambda
=\beta_m\pi^{-2}\ln\ell$, the leading term proved in
Proposition~\ref{lem:higher}, the case $m=1$ being
Proposition~\ref{prop:trace}. So the polynomial-minorant method is not lossy
in the limit: its ceiling is the fixed-window asymptotic coefficient, while
degree three realizes about a quarter of it, and by
Lemma~\ref{lem:minorant}(ii) every fraction below $1$ is realized by some
admissible polynomial of sufficiently large degree. Two things are
deliberately not said. No specific higher-degree minorant is exhibited
together with a feasibility certificate, so no coefficient at any degree
above three is asserted here. And $2\ln3/\pi^2$ is not called an optimal
nonasymptotic constant for $M_a$; it is the supremum of the leading
coefficients over the class of Lemma~\ref{lem:minorant}, it is the
fixed-window Landau--Widom asymptotic coefficient, and this paper uses it as
nothing else. Degree three is retained in Theorem~\ref{thm:window} because it
is the one minorant that can be displayed outright and evaluated against
traces whose constant terms are known in closed form
(Lemma~\ref{lem:exacttrace}).
\end{remark}

\begin{lemma}[the moment method at degrees two and three]\label{lem:degree3}
Call a real polynomial $\varrho$ \emph{admissible} if $\varrho(0)=0$ and
$\varrho(w)\le\ind{(3/16,\,1/4]}(w)$ for every $w\in[0,\frac14]$; this is the
constraint established in Proposition~\ref{prop:count}, and it is
Definition~\ref{def:admissible} below specialized to the window
$(\frac3{16},\frac14]$. For $\varrho(w)=\sum_{m\ge1}b_mw^m$ write
$\mathcal I[\varrho]=\pi^{-2}\sum_{m\ge1}b_m\beta_m$ with
$\beta_m=B(m,m)$, so that $\mathcal I[\varrho]$ is the coefficient of
$\ln\ell$ that $\varrho$ delivers through \eqref{eq:countineq} and
Proposition~\ref{lem:higher}. Then:
\begin{enumerate}[label=(\roman*),itemsep=3pt,topsep=3pt]
\item if $\deg\varrho\le2$ then $\mathcal I[\varrho]\le0$, with equality for
$\varrho\equiv0$;
\item if $\deg\varrho\le3$ then
$\mathcal I[\varrho]\le c_0=\frac8{15\pi^2}$, with equality for
$\varrho(w)=256w^3-48w^2$.
\end{enumerate}
\end{lemma}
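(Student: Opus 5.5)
The plan is weak linear-programming duality. I will exhibit a nonnegative ``dual measure'' that reproduces the functional $\mathcal I$ on polynomials of degree at most three vanishing at $0$. Its only positive weight inside the window will sit at the right endpoint $w=\frac14$.

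Concretely, for $\varrho(w)=b_1w+b_2w^2+b_3w^3$ I look for constants $\alpha_\ast,\beta_\ast,\gamma_\ast\ge0$ with
\[
\pi^2\,\mathcal I[\varrho]=\alpha_\ast\,\varrho'(0)+\beta_\ast\,\varrho(\tfrac3{16})+\gamma_\ast\,\varrho(\tfrac14)
\]
identically in $(b_1,b_2,b_3)$. The three nodes are forced by the expected extremal polynomial $256w^3-48w^2=16w^2(16w-3)$. It has a double zero at $0$, which is why $\varrho'(0)$ appears, a simple zero at $\frac3{16}$, and it touches the constraint value $1$ at $\frac14$.

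Matching the coefficients of $b_1,b_2,b_3$ (using $\beta_1=1$, $\beta_2=\frac16$, $\beta_3=\frac1{30}$) gives a $3\times3$ linear system:
\[
\alpha_\ast+\tfrac3{16}\beta_\ast+\tfrac14\gamma_\ast=1,\qquad
\tfrac9{256}\beta_\ast+\tfrac1{16}\gamma_\ast=\tfrac16,\qquad
\tfrac{27}{4096}\beta_\ast+\tfrac1{64}\gamma_\ast=\tfrac1{30}.
\]
Eliminating $\gamma_\ast$ between the last two equations gives $\beta_\ast=\frac{512}{135}$. Then $\gamma_\ast=\frac8{15}$, and finally $\alpha_\ast=1-\frac{32}{45}-\frac{2}{15}=\frac7{45}$. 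All three weights are positive, and this is the certificate \eqref{eq:dualcert3}.

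Admissibility now closes the argument.
\begin{itemize}
\item Since $\varrho(0)=0$ and $\varrho\le0$ on $[0,\frac3{16}]$, we get $\varrho'(0)\le0$, because $\varrho(w)/w\le0$ for small $w>0$.
\item Since $\frac3{16}\notin(\frac3{16},\frac14]$, we have $\varrho(\frac3{16})\le0$.
\item We have $\varrho(\frac14)\le1$.
\end{itemize}
Hence $\pi^2\mathcal I[\varrho]\le\gamma_\ast=\frac8{15}$, which is (ii). Equality for $256w^3-48w^2$ is then immediate: $\varrho'(0)=0$, $\varrho(\frac3{16})=0$ and $\varrho(\frac14)=1$. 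It can also be read off from the computation $256\beta_3-48\beta_2=\frac8{15}$ already done in the proof of Theorem~\ref{thm:window}. Admissibility of that polynomial is Proposition~\ref{prop:count}.

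For (i), where $b_3=0$, I will use the two-node certificate
\[
\pi^2\mathcal I[\varrho]=\alpha'\varrho'(0)+\beta'\varrho(\tfrac3{16}).
\]
Matching $b_2$ gives $\frac9{256}\beta'=\frac16$, so $\beta'=\frac{128}{27}$. Matching $b_1$ then gives $\alpha'=1-\frac{3}{16}\cdot\frac{128}{27}=\frac19$. Both weights are positive. Since $\varrho'(0)\le0$ and $\varrho(\frac3{16})\le0$ as before, $\mathcal I[\varrho]\le0$, with equality at $\varrho\equiv0$.

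The only step needing care is choosing the nodes, and in particular realizing that the constraint at $0$ must enter through the derivative $\varrho'(0)$ rather than a point evaluation, since every admissible $\varrho$ already vanishes there. Once the nodes are fixed by the zero pattern of the extremal polynomial, everything reduces to a small triangular linear solve and a positivity check.
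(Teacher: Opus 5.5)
Your proposal is correct and is essentially the paper's proof. You use the same three consequences of admissibility ($b_1=\varrho'(0)\le0$, $\varrho(\frac3{16})\le0$, $\varrho(\frac14)\le1$) and the same dual certificates, with multipliers $(\frac7{45},\frac{512}{135},\frac8{15})$ in (ii) and $(\frac19,\frac{128}{27})$ in (i), solving the linear system for them where the paper states them and checks the coefficient identities.
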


\begin{proof}
Admissibility enters at exactly three points of $[0,\frac14]$, and only
through the three consequences
\begin{equation}\label{eq:threeconstraints}
b_1\le0,\qquad \varrho\bigl(\tfrac3{16}\bigr)\le0,\qquad
\varrho\bigl(\tfrac14\bigr)\le1 .
\end{equation}
The second and third are the defining inequality evaluated at $w=\frac3{16}$,
where the indicator vanishes because the window is open on the left, and at
$w=\frac14$, where it equals $1$. For the first, $\varrho(w)\le0$ and $w>0$
on $(0,\frac3{16}]$ give $\varrho(w)/w=b_1+b_2w+b_3w^2\le0$ there; letting
$w\downarrow0$ yields $b_1\le0$.

Recall $\beta_1=1$, $\beta_2=\frac16$, $\beta_3=\frac1{30}$
(Proposition~\ref{prop:trace} and Lemma~\ref{lem:exacttrace}).

(i) With $b_3=0$ one has the identity, valid for all $(b_1,b_2)$,
\begin{equation}\label{eq:dualcert2}
b_1+\tfrac16b_2\;=\;\tfrac19\,b_1
\;+\;\tfrac{128}{27}\,\varrho\bigl(\tfrac3{16}\bigr),
\end{equation}
since the coefficient of $b_1$ on the right is
$\frac19+\frac{128}{27}\cdot\frac3{16}=\frac19+\frac89=1$ and that of $b_2$
is $\frac{128}{27}\cdot\frac9{256}=\frac16$. The multipliers $\frac19$ and
$\frac{128}{27}$ are positive, so \eqref{eq:threeconstraints} forces
$\pi^2\mathcal I[\varrho]=b_1+\frac16b_2\le0$. The zero polynomial is
admissible and attains $0$.

(ii) The corresponding identity at degree three, valid for all
$(b_1,b_2,b_3)$, is
\begin{equation}\label{eq:dualcert3}
b_1+\tfrac16b_2+\tfrac1{30}b_3
\;=\;\tfrac7{45}\,b_1
\;+\;\tfrac{512}{135}\,\varrho\bigl(\tfrac3{16}\bigr)
\;+\;\tfrac8{15}\,\varrho\bigl(\tfrac14\bigr),
\end{equation}
as one checks by collecting the three coefficients separately:
\[
\tfrac7{45}+\tfrac{512}{135}\cdot\tfrac3{16}+\tfrac8{15}\cdot\tfrac14
=\tfrac7{45}+\tfrac{32}{45}+\tfrac6{45}=1,
\]
\[
\tfrac{512}{135}\cdot\tfrac9{256}+\tfrac8{15}\cdot\tfrac1{16}
=\tfrac2{15}+\tfrac1{30}=\tfrac16,
\qquad
\tfrac{512}{135}\cdot\tfrac{27}{4096}+\tfrac8{15}\cdot\tfrac1{64}
=\tfrac1{40}+\tfrac1{120}=\tfrac1{30}.
\]
The three multipliers $\frac7{45}$, $\frac{512}{135}$, $\frac8{15}$ are
positive, so \eqref{eq:threeconstraints} and \eqref{eq:dualcert3} give
\[
\pi^2\,\mathcal I[\varrho]\ \le\ \tfrac7{45}\cdot0
+\tfrac{512}{135}\cdot0+\tfrac8{15}\cdot1=\tfrac8{15},
\]
that is, $\mathcal I[\varrho]\le\frac8{15\pi^2}=c_0$. Equality holds for
$\varrho(w)=256w^3-48w^2$, which is admissible by
Proposition~\ref{prop:count} and meets all three constraints of
\eqref{eq:threeconstraints} with equality: $b_1=0$,
$\varrho(\frac3{16})=0$, $\varrho(\frac14)=1$.
\end{proof}

\noindent
Identity \eqref{eq:dualcert3} is a dual feasible point for the linear
program \emph{maximize $\mathcal I[\varrho]$ over admissible $\varrho$ of
degree at most three}: the nonnegative triple
$(\frac7{45},\frac{512}{135},\frac8{15})$ reproduces the moment vector
$(\beta_1,\beta_2,\beta_3)$ from evaluation at $0^+$, at $\frac3{16}$ and at
$\frac14$, and its value against the indicator is $\frac8{15}$. Optimality
therefore requires no search over polynomials: three rational multipliers and
three arithmetic identities settle it.
Part~(i) is the reason the third moment is needed at all. No combination of
the global traces $\Tr S_\ell=\ell$ and $\Tr S_\ell^2$ separates the band
mass from the plunge tail, whose mass is $\Theta(\ln\ell)$ and comparable to
it; the third moment is the first that resolves them, which is why the naive
$\Tr(S-S^2)$/$\Tr((S-S^2)^2)$ Paley--Zygmund route does not close and
\eqref{eq:countineq} does.

\subsection{Window density at fixed depth}\label{sec:fixeddepth}

Write the depth of an eigenvalue as $u(\lambda)=\ln\frac{1-\lambda}\lambda$,
so that $u>0$ below $\frac12$ and $u<0$ above it, and, for $u>0$, set
\[
W_0(u,\ell)=\#\{n:\lambda_n\in(e^{-2u},e^{-u})\},\qquad
W_1(u,\ell)=\#\{n:\lambda_n\in(1-e^{-u},1-e^{-2u})\}.
\]
Both are indexed by the \emph{magnitude} of the depth: $W_0$ collects the
eigenvalues at depths in $(u,2u)$, $W_1$ their reflections, at depths in
$(-2u,-u)$. The two ranges are disjoint, and hence so are the two
$\lambda$-windows above, precisely when $u>\ln2$; that is why the proposition
is stated on that range. At $u=\ln2$ the windows abut at $\lambda=\frac12$,
reading $(\frac14,\frac12)$ and $(\frac12,\frac34)$, and for $u<\ln2$ they
overlap and $W_0+W_1$ double-counts.

The moment method needs a polynomial that lies below the indicator of the
$\varphi$-window and vanishes at the origin. For the window $(\frac3{16},
\frac14]$ of Section~\ref{sec:tier1} one such polynomial was displayed
outright (Proposition~\ref{prop:count}). For a general interior window no
single polynomial presents itself, and the existence of good ones has to be
proved. That is done first: without it Proposition~\ref{prop:fixeddepth}
would not be unconditional. The construction is standard approximation
theory, and it is carried out here rather than asserted.

\begin{definition}[admissible minorant]\label{def:admissible}
Let $0<a<b\le\frac14$. A real polynomial $\varrho$ is \emph{admissible for
$(a,b)$} if
\begin{enumerate}[label=(A\arabic*),itemsep=1pt,topsep=3pt]
\item $\varrho(0)=0$;
\item $\varrho(w)\le\ind{(a,b)}(w)$ for every $w\in[0,\frac14]$.
\end{enumerate}
For such a $\varrho$, written $\varrho(w)=\sum_{m=1}^{m_0}b_mw^m$, put
\begin{equation}\label{eq:Ifunctional}
\mathcal I[\varrho]\ :=\ \frac1{\pi^2}\sum_{m=1}^{m_0}b_m\beta_m
\ =\ \frac1{\pi^2}\int_0^1\varrho\bigl(\lambda(1-\lambda)\bigr)
\frac{d\lambda}{\lambda(1-\lambda)},
\end{equation}
the second equality because
$\int_0^1[\lambda(1-\lambda)]^{m-1}d\lambda=B(m,m)=\beta_m$.
\end{definition}

\noindent
(A1) is not a normalization: it is what makes
$\sum_n\varrho(\varphi_n)=\sum_mb_m\Tr(\varphi^m)$ a \emph{finite} sum of
traces. A constant term $b_0\ne0$ would contribute $b_0\sum_n1=\pm\infty$,
since $\varphi$ has infinite-dimensional kernel; with $b_0=0$ every term is
$\Tr(\varphi^m)\le\frac14^{\,m-1}\Tr\varphi<\infty$ for $m\ge1$, and the sum
converges absolutely. (A2) is one-sided only: $\varrho$ may be negative
anywhere, and the admissible polynomials constructed below are. What (A2)
buys is the inequality
\begin{equation}\label{eq:minorantcount}
\sum_{m}b_m\Tr(\varphi^m)=\sum_n\varrho(\varphi_n)
\ \le\ \#\{n:\varphi_n\in(a,b)\},
\end{equation}
valid termwise because $\varphi_n\in[0,\frac14]$ for every $n$.

\begin{lemma}[polynomial minorants of an interior spectral
window]\label{lem:minorant}
Let $0<a<b<\frac14$. Then:
\begin{enumerate}[label=(\roman*),itemsep=3pt,topsep=3pt]
\item \emph{(ceiling)} $\mathcal I[\varrho]\le\mathcal I^\ast(a,b)$ for every
$\varrho$ admissible for $(a,b)$, where
\[
\mathcal I^\ast(a,b):=\frac1{\pi^2}\int_{\{\lambda\in(0,1):\,
a<\lambda(1-\lambda)<b\}}\frac{d\lambda}{\lambda(1-\lambda)}\ <\ \infty .
\]
\item \emph{(attainment in the limit)} There is a sequence $(\varrho_N)$ of
polynomials admissible for $(a,b)$, of finite degrees $m_0(N)$, with
$\mathcal I[\varrho_N]\to\mathcal I^\ast(a,b)$.
\end{enumerate}
Consequently $\sup\{\mathcal I[\varrho]:\varrho\text{ admissible for
}(a,b)\}=\mathcal I^\ast(a,b)$, and for every $\theta<1$ there is an
admissible polynomial of finite degree with
$\mathcal I[\varrho]\ge\theta\,\mathcal I^\ast(a,b)$.
\end{lemma}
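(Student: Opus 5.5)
The plan is to work entirely in the variable $w=\lambda(1-\lambda)$. Under this change of variables the admissibility constraint (A2) and the functional \eqref{eq:Ifunctional} become statements about a single finite measure on $[0,\frac14]$. Part (i) is then pointwise domination, and part (ii) is a Weierstrass approximation from below.

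\emph{Step 1 (pushforward).} Both halves $\lambda\in(0,\frac12)$ and $\lambda\in(\frac12,1)$ map onto $w\in(0,\frac14)$, and $dw=(1-2\lambda)\,d\lambda$ with $1-2\lambda=\sqrt{1-4w}$ on the first half. By symmetry, for any $\varrho$ with $\varrho(0)=0$,
\[
\mathcal I[\varrho]=\frac{2}{\pi^2}\int_0^{1/4}\frac{\varrho(w)}{w}\,d\nu(w),\qquad d\nu(w)=\frac{dw}{\sqrt{1-4w}} .
\]
The measure $\nu$ is finite, with $\nu([0,\frac14])=\frac12$. Write $\varrho(w)=w\,q(w)$ with $q$ a polynomial; this is exactly what (A1) permits. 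Admissibility then reads $q(w)\le h(w):=\ind{(a,b)}(w)/w$ for $w\in(0,\frac14]$. In the same variable,
\[
\mathcal I^\ast(a,b)=\frac{2}{\pi^2}\int h\,d\nu .
\]
This is finite because $h\le 1/a$ and $\nu$ is finite; equivalently, the $\lambda$-set in the definition stays away from $0$ and $1$ since $a>0$.

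\emph{Step 2 (ceiling, part (i)).} The inequality $q\le h$ holds $\nu$-a.e., and $q$ is bounded on $[0,\frac14]$, so the integral $\int q\,d\nu$ converges absolutely. Integrating gives $\mathcal I[\varrho]\le\mathcal I^\ast(a,b)$ directly.

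\emph{Step 3 (attainment, part (ii)).} Fix $N$.
\begin{enumerate}[label=(\alph*),itemsep=1pt,topsep=3pt]
\item Choose a continuous trapezoid $g_N$ on $[0,\frac14]$ with $0\le g_N\le h$. It should equal $h$ on $[a+\delta_N,\,b-\delta_N]$, vanish outside $(a,b)$, and interpolate linearly in between, with $\delta_N\downarrow0$. Then $\int(h-g_N)\,d\nu\le\frac1a\,\nu\bigl((a,a+\delta_N)\cup(b-\delta_N,b)\bigr)\to0$; the measure of these two thin strips tends to zero because $\nu$ is finite and absolutely continuous.
\item Put $\eta_N=1/N$. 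By Weierstrass, choose a polynomial $q_N$ with $|q_N-(g_N-\eta_N)|<\eta_N$ uniformly on $[0,\frac14]$. Then $q_N<g_N\le h$ there, so $\varrho_N(w)=w\,q_N(w)$ satisfies (A1) and (A2). At $w=0$ the constraint is $\varrho_N(0)=0\le0$, which is automatic.
\item The loss is controlled by $\int q_N\,d\nu\ge\int g_N\,d\nu-2\eta_N\,\nu([0,\frac14])$. Hence $\mathcal I[\varrho_N]\to\mathcal I^\ast(a,b)$.
\end{enumerate}
The "consequently" clause follows at once: combine (i) and (ii) to get the supremum, and pick $N$ large enough that $\mathcal I[\varrho_N]\ge\theta\,\mathcal I^\ast(a,b)$, which is possible because $\mathcal I^\ast(a,b)>0$.

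\emph{Main obstacle.} The only delicate point is that the polynomial must lie below the discontinuous target $h$ everywhere, including near the jumps at $a$ and $b$ and on $[0,a]$, where it must be $\le0$. A uniform approximation of $h$ itself cannot achieve this. The trapezoid-then-shift-down device handles it: the continuous minorant $g_N$ absorbs the discontinuities, and the downward shift by $\eta_N$ turns uniform closeness into a one-sided inequality. The cost of both is controlled only because $\nu$ is a finite measure. This is where the choice of writing $\varrho=wq$ matters: it removes the $1/w$ singularity of $d\lambda/(\lambda(1-\lambda))$ at the origin, which would otherwise make the measure infinite.
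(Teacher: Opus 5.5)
Your proof is correct and follows essentially the same route as the paper. Part (i) integrates the pointwise constraint, and part (ii) takes a continuous minorant of $\ind{(a,b)}(w)/w$, applies Weierstrass approximation shifted down by $\eta_N$ so the inequality becomes one-sided, and multiplies by $w$ to secure (A1). The differences are cosmetic: you push the measure forward to $d\nu=dw/\sqrt{1-4w}$ on $[0,\frac14]$, build the minorant of $h$ directly, and bound the loss by $\frac1a\,\nu$ of the two strips; the paper stays in the $\lambda$ variable, builds $g_N\le\ind{(a,b)}$ first, divides by $w$, and concludes by monotone convergence.
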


\begin{proof}
Write $w=\lambda(1-\lambda)$ and $E=\{\lambda\in(0,1):a<w<b\}$.
$\mathcal I^\ast$ is finite because $w>a>0$ on $E$ and $|E|\le1$, so the
integrand is at most $1/a$.

(i) By (A2), $\varrho(w)\le\ind{(a,b)}(w)$ pointwise on $[0,\frac14]$, and
$\lambda\mapsto w$ maps $(0,1)$ into $(0,\frac14]$; the measure
$d\lambda/w$ is positive on $(0,1)$. The integral in \eqref{eq:Ifunctional}
converges absolutely, since $\varrho(w)/w$ is a polynomial in $w$ and hence
bounded on $[0,\frac14]$, which is (A1) again. Integrating the pointwise
inequality gives $\mathcal I[\varrho]\le\mathcal I^\ast(a,b)$.

(ii) In four steps.

\emph{Step 1 (continuous minorants of the indicator).} For each integer
$N\ge1$ set
\[
g_N(w)=\min\bigl(1,\;N\operatorname{dist}
\bigl(w,\,[0,\tfrac14]\setminus(a,b)\bigr)\bigr),
\qquad w\in[0,\tfrac14].
\]
Then $g_N$ is continuous, $0\le g_N\le\ind{(a,b)}$, $g_N$ is nondecreasing in
$N$, and $g_N\uparrow\ind{(a,b)}$ pointwise on $[0,\frac14]$: for $w\in(a,b)$
the distance is $\min(w-a,b-w)>0$, so $N$ times it exceeds $1$ once
$N>\bigl(\min(w-a,b-w)\bigr)^{-1}$, while for $w\notin(a,b)$ the distance and
hence $g_N(w)$ vanish for every $N$. What the next step needs is that
$g_N\equiv0$ on $[0,a]$ with $a>0$.

\emph{Step 2 (dividing by $w$).} Put $h_N(w)=g_N(w)/w$ for $w>0$ and
$h_N(0)=0$. Since $g_N$ vanishes identically on $[0,a]$ with $a>0$, $h_N$
vanishes identically on $[0,a]$ and equals a quotient of continuous functions
with nonvanishing denominator on $[a,\frac14]$; so $h_N\in C[0,\frac14]$.

\emph{Step 3 (Weierstrass, then a one-sided margin).} By the Weierstrass
approximation theorem choose a real polynomial $q_N$ with
$\sup_{[0,1/4]}|q_N-h_N|\le\eta_N:=\frac1N$, and set
\[
\varrho_N(w)\;:=\;w\bigl(q_N(w)-\eta_N\bigr).
\]
Then $\varrho_N(0)=0$, which is (A1). For (A2): for every $w\in[0,\frac14]$
we have $w\ge0$ and $q_N(w)-\eta_N\le h_N(w)$, hence
\[
\varrho_N(w)\ \le\ w\,h_N(w)\ =\ g_N(w)\ \le\ \ind{(a,b)}(w).
\]
This is where the margin $\eta_N$ is spent, and it is spent on the correct
side: subtracting it converts a two-sided approximation into a one-sided
minorant. Nothing forces $\varrho_N\ge0$ anywhere, and in general it is
negative somewhere.

\emph{Step 4 (the coefficient converges).} By \eqref{eq:Ifunctional},
\[
\mathcal I[\varrho_N]
=\frac1{\pi^2}\int_0^1\bigl(q_N(w)-\eta_N\bigr)\,d\lambda,
\qquad
\frac1{\pi^2}\int_0^1h_N(w)\,d\lambda
=\frac1{\pi^2}\int_0^1g_N(w)\frac{d\lambda}{w},
\]
both integrals being over $\lambda\in(0,1)$ with $w=\lambda(1-\lambda)$; the
first is finite because $q_N$ is a polynomial, and $|q_N-h_N|\le\eta_N$
gives
\[
\Bigl|\mathcal I[\varrho_N]
-\frac1{\pi^2}\int_0^1g_N(w)\frac{d\lambda}{w}\Bigr|
\ \le\ \frac{2\eta_N}{\pi^2}\ =\ \frac{2}{\pi^2N}\ \longrightarrow\ 0 .
\]
By Step~1, $g_N(w)/w\uparrow\ind{(a,b)}(w)/w$ pointwise on $(0,1)$ and all
terms are nonnegative, so monotone convergence gives
$\frac1{\pi^2}\int_0^1g_N(w)\frac{d\lambda}{w}\to\mathcal I^\ast(a,b)$.
Hence $\mathcal I[\varrho_N]\to\mathcal I^\ast(a,b)$.

The final sentence combines (i) and (ii).
\end{proof}

\begin{remark}[endpoints]\label{rem:minorantendpoints}
Lemma~\ref{lem:minorant} is stated with the \emph{open} window $(a,b)$ and
delivers \eqref{eq:minorantcount} with $\#\{n:\varphi_n\in(a,b)\}$ on the
right, and this is what Proposition~\ref{prop:fixeddepth} consumes: the
preimage of the open $w$-interval under $\lambda\mapsto\lambda(1-\lambda)$ is
the union of two \emph{open} $\lambda$-intervals, which is exactly
$W_0\cup W_1$. No assumption is made that $\varphi$ has no eigenvalue at $a$
or at $b$; eigenvalues sitting at an endpoint are simply not counted on
either side, and the inequality is unaffected. When $b=\frac14$ the same
proof applies with $\ind{(a,1/4]}$ in place of $\ind{(a,b)}$, because
$\varphi_n\le\frac14$ always makes the two indicators agree at every point of
the spectrum; that is the case of Proposition~\ref{prop:count}, where the
minorant is displayed rather than constructed.
\end{remark}

\begin{proposition}[Fixed-depth density]\label{prop:fixeddepth}
Assume Theorem~\ref{thm:basorwidom}. Fix $u>\ln2$ and set
\[
a_u=e^{-2u}(1-e^{-2u}),\qquad b_u=e^{-u}(1-e^{-u}),
\qquad\text{so that }0<a_u<b_u<\tfrac14 .
\]
Then for every $c_1$ with
$0<c_1<\frac2{\pi^2}\bigl(u+\ln(1+e^{-u})\bigr)$ there is a finite $C_1$,
depending on $u$ and on $c_1$, with
\[
W_0(u,\ell)+W_1(u,\ell)\ \ge\ c_1\ln\ell-C_1
\qquad(\ell\ge1).
\]
In particular the plunge has density $\gtrsim\frac1{\pi^2}$ per unit
of depth on any compact depth range in $(\ln2,\infty)$.
\end{proposition}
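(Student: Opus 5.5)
The plan is to combine the minorant machinery of Lemma~\ref{lem:minorant} with the fixed-order trace asymptotics of Proposition~\ref{lem:higher}, in the same way that Theorem~\ref{thm:window} combined Proposition~\ref{prop:count} with Lemma~\ref{lem:exacttrace}.

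\emph{Step 1: translate the window.} Identify $W_0+W_1$ with a $\varphi$-window count. The map $\lambda\mapsto w=\lambda(1-\lambda)$ is increasing on $(0,\frac12)$ and symmetric under $\lambda\mapsto1-\lambda$. It sends $e^{-2u}$ to $a_u$ and $e^{-u}$ to $b_u$. Since $u>\ln2$ gives $e^{-u}<\frac12$, the preimage of the open interval $(a_u,b_u)$ is the disjoint union $(e^{-2u},e^{-u})\cup(1-e^{-u},1-e^{-2u})$. This yields
\[
W_0(u,\ell)+W_1(u,\ell)=\#\{n:\varphi_n\in(a_u,b_u)\},
\]
and also confirms $0<a_u<b_u<\frac14$. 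As in Remark~\ref{rem:minorantendpoints}, eigenvalues sitting at endpoints need no special treatment.

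\emph{Step 2: evaluate the ceiling.} Compute $\mathcal I^\ast(a_u,b_u)$ in closed form. The antiderivative of $d\lambda/(\lambda(1-\lambda))$ is $\ln\frac{\lambda}{1-\lambda}$, and the two pieces of the preimage contribute equally by symmetry. Hence
\[
\mathcal I^\ast(a_u,b_u)=\frac{2}{\pi^2}\Bigl[\ln\tfrac{\lambda}{1-\lambda}\Bigr]_{e^{-2u}}^{e^{-u}}
=\frac{2}{\pi^2}\Bigl(u+\ln\frac{1-e^{-2u}}{1-e^{-u}}\Bigr)
=\frac{2}{\pi^2}\bigl(u+\ln(1+e^{-u})\bigr).
\]
This is exactly the threshold for $c_1$ in the statement.

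\emph{Step 3: choose a minorant and sum the traces.} Given $c_1<\mathcal I^\ast(a_u,b_u)$, set $\theta=c_1/\mathcal I^\ast\in(0,1)$ and take $\theta'\in(\theta,1)$. Lemma~\ref{lem:minorant}(ii) supplies an admissible polynomial $\varrho(w)=\sum_{m=1}^{m_0}b_mw^m$ of finite degree $m_0$ with $\mathcal I[\varrho]\ge\theta'\mathcal I^\ast>c_1$; the polynomial depends on $u$ and $c_1$ only. Inequality \eqref{eq:minorantcount} gives
\[
W_0+W_1\ge\sum_{m=1}^{m_0}b_m\Tr(\varphi^m).
\]
Apply Proposition~\ref{lem:higher} at each of the finitely many orders $m\le m_0$, writing $\Tr(\varphi^m)=\beta_m\pi^{-2}\ln\ell+E_m(\ell)$ with $|E_m(\ell)|\le K_m$ for all $\ell\ge1$. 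Because the $b_m$ have both signs, the error terms are bounded through $\sum_m|b_m|K_m$ rather than $\sum_m b_mK_m$. This gives
\[
W_0+W_1\ge\mathcal I[\varrho]\ln\ell-\sum_{m\le m_0}|b_m|K_m\ge c_1\ln\ell-C_1,
\]
with $C_1=\sum_{m\le m_0}|b_m|K_m$ and $\ell\ge1$ used to drop the surplus $(\mathcal I[\varrho]-c_1)\ln\ell\ge0$.

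\emph{Step 4: the density remark.} The derivative of $\frac{2}{\pi^2}\bigl(u+\ln(1+e^{-u})\bigr)$ in $u$ is $\frac{2}{\pi^2}\cdot\frac{1}{1+e^{-u}}$. This lies between $\frac{1}{\pi^2}$ and $\frac{2}{\pi^2}$ for $u\ge0$ (indeed $\ge\frac{4}{3\pi^2}$ once $u>\ln2$). Since $\mathcal I^\ast$ counts two reflected depth intervals together, this gives the stated density $\gtrsim\frac{1}{\pi^2}$ per unit depth.

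\emph{Main obstacle.} There is no analytic difficulty, but one point needs care. The degree $m_0$, and hence $C_1$, is produced non-constructively by Weierstrass approximation, and $K_m$ grows like $(16/3)^{2m}$. So $C_1$ must be declared dependent on $(u,c_1)$ only through the fixed choice of $\varrho$. The argument must be stated so that Proposition~\ref{lem:higher} is invoked only at fixed orders, and never uniformly in $m$, in line with Remark~\ref{rem:mquant}.
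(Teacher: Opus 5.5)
Your proposal is correct and follows the paper's proof step for step. You identify the preimage of $(a_u,b_u)$ with the two windows of $W_0+W_1$, evaluate the ceiling in closed form, fix one minorant from Lemma~\ref{lem:minorant}(ii), and invoke Proposition~\ref{lem:higher} only at the finitely many orders $m\le m_0$; your $C_1=\sum_{m\le m_0}|b_m|K_m$ just makes explicit the remainder assembly the paper leaves implicit. For the informal density clause, which the paper does not prove separately, a cleaner reading than the $u$-derivative is available: each $\lambda$-window has exact depth length $\ln\frac{e^{2u}-1}{e^{u}-1}=u+\ln(1+e^{-u})$, so the ceiling is exactly $\frac1{\pi^2}$ per unit of total depth.
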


\noindent
Three features of the statement deserve comment before the proof.

\emph{It is two-sided, and it is not a lower-half statement.} What is bounded
below is the symmetric pair $W_0+W_1$, a tail window at depths in $(u,2u)$
together with its reflected head window at depths in $(-2u,-u)$, and not
$W_0$ alone. The method forces this rather than choosing it: the moment
inequality acts on $\varphi=S_\ell-S_\ell^2$, whose spectrum is
$\lambda_n(1-\lambda_n)$, and $\lambda\mapsto\lambda(1-\lambda)$ identifies
each $\lambda$ with $1-\lambda$, so a $w$-window always pulls back to a pair
of $\lambda$-windows symmetric about $\frac12$. Splitting the pair
(bounding the lower-half count alone) is not achieved here.

\emph{The ceiling is exact but not attained.} The supremum
$\frac2{\pi^2}(u+\ln(1+e^{-u}))$ is the exact value of
$\mathcal I^\ast(a_u,b_u)$, and no polynomial attains it; it is approached,
by Lemma~\ref{lem:minorant}(ii), at the cost of a larger degree $m_0$.

\emph{Nothing here is uniform in $u$.} The constant $C_1$ is finite but not
effective, and \emph{no estimate uniform as $u\to\infty$ is claimed}: the
construction supplies a finite degree at each fixed depth with no degree
estimate uniform in the depth, and the fixed-order trace constants of
Proposition~\ref{lem:higher} deteriorate with that degree
(Remark~\ref{rem:mquant}, Section~\ref{sec:stops}).

\begin{proof}
\emph{Step 0 (the window in $w$).} Since $u>\ln2$ we have $e^{-u}<\frac12$,
so $t\mapsto t(1-t)$ is increasing on $(0,e^{-u}]$ and
$0<a_u<b_u<\frac14$, the last inequality strict because $e^{-u}\ne\frac12$.
The preimage of $(a_u,b_u)$ under $\lambda\mapsto\lambda(1-\lambda)$, inside
$(0,1)$, is
$(e^{-2u},e^{-u})\cup(1-e^{-u},1-e^{-2u})$, which is exactly $W_0$'s window
together with $W_1$'s. The two are disjoint because $u>\ln2$ places
$e^{-u}$ below $\frac12$ and $1-e^{-u}$ above it. Hence
$\#\{n:\varphi_n\in(a_u,b_u)\}=W_0(u,\ell)+W_1(u,\ell)$.

\emph{Step 1 (the value of $\mathcal I^\ast$).} With
$[\ln\frac\lambda{1-\lambda}]$ as antiderivative of
$\frac{d\lambda}{\lambda(1-\lambda)}$ and the substitution
$\lambda\mapsto1-\lambda$ pairing the two components,
\[
\mathcal I^\ast(a_u,b_u)
=\frac2{\pi^2}\Bigl[\ln\frac\lambda{1-\lambda}\Bigr]_{e^{-2u}}^{e^{-u}}
=\frac2{\pi^2}\ln\frac{e^{-u}(1-e^{-2u})}{e^{-2u}(1-e^{-u})}
=\frac2{\pi^2}\bigl(u+\ln(1+e^{-u})\bigr),
\]
using $1-e^{-2u}=(1-e^{-u})(1+e^{-u})$.

\emph{Step 2 (choosing the polynomial).} Given $c_1<\mathcal I^\ast(a_u,b_u)$,
Lemma~\ref{lem:minorant}(ii) supplies a polynomial $\varrho$ admissible for
$(a_u,b_u)$, of some finite degree $m_0$, with $\mathcal I[\varrho]\ge c_1$.
Fix it. By \eqref{eq:minorantcount} and Step~0,
\[
W_0(u,\ell)+W_1(u,\ell)\ \ge\ \sum_{m=1}^{m_0}b_m\Tr(\varphi^m).
\]

\emph{Step 3 (evaluating the traces).} The polynomial $\varrho$ has fixed
finite degree
$m_0$, so the bound of Step~2 is a fixed finite linear combination
$\sum_{m\le m_0}b_m\Tr(\varphi^m)$ of traces at \emph{finitely many fixed}
orders $m$. Proposition~\ref{lem:higher} supplies each of those, at that
fixed $m$, as $\beta_m\pi^{-2}\ln\ell+O_m(1)$ with a remainder bounded
uniformly in $\ell\ge1$. Summing with the fixed coefficients $b_m$, the
$\ln\ell$ terms assemble to
$\bigl(\pi^{-2}\sum_mb_m\beta_m\bigr)\ln\ell=\mathcal I[\varrho]\ln\ell\ge
c_1\ln\ell$ by \eqref{eq:Ifunctional} and Step~2, and the $m_0$ remainders
assemble to a finite constant, which we call $C_1$.
The order of the quantifiers is what makes this work: \emph{first} $u$ and
$c_1$ are fixed, \emph{then} $\varrho$ and hence $m_0$, \emph{then}
Proposition~\ref{lem:higher} is invoked at each of the finitely many
$m\le m_0$. At no point is a remainder uniform in $m$ required, and none is
available (Remark~\ref{rem:mquant}).

That same order is why nothing is gained as $u$ varies: both $C_1$ and the
$\ell$-threshold behind it are controlled only at the fixed $u$ chosen at the
outset, and the constant in Proposition~\ref{lem:higher} grows like
$(16/3)^{2m}$ with the degree. The proposition is a family of statements
indexed by $u$, each proved, with no uniformity across the family. $C_1$ is
finite but not effective for the same reason $C_0$ is in
Theorem~\ref{thm:window}: the threshold $\ell_1$ inside
Proposition~\ref{lem:higher} rests on an implied constant the cited source
does not quantify.
\end{proof}

\begin{remark}[an explicit minorant, and what it costs]\label{rem:explicitmin}
Lemma~\ref{lem:minorant}(ii) is an existence statement, and one admissible
polynomial can be displayed outright. For $0<a<b<\frac14$ and an
integer $r\ge1$ put
\[
\varrho_r(w)=\kappa_r\,w^{\,r}(w-a)(b-w),\qquad
\kappa_r=\Bigl(\max_{a\le w\le b}w^{\,r}(w-a)(b-w)\Bigr)^{-1}.
\]
Admissibility is immediate: $\varrho_r(0)=0$; on $[0,a]$ the factor $(w-a)$
is $\le0$ and $(b-w)>0$, on $[b,\frac14]$ the signs are reversed, so
$\varrho_r\le0$ off $(a,b)$; and $\varrho_r\le1$ on $[a,b]$ by the choice of
$\kappa_r$. Its coefficient is closed-form: with
$t_r=\beta_{r+1}/\beta_r=\frac r{2(2r+1)}$,
\[
\mathcal I[\varrho_r]=\frac{\kappa_r\beta_r}{\pi^2}
\Bigl[(t_r-a)(b-t_r)-\frac{t_r}{2(2r+1)(2r+3)}\Bigr],
\]
and since $\max_{[a,b]}w^r(w-a)(b-w)\le b^{\,r}(b-a)^2/4$, the fully explicit
bound $\kappa_r\ge4b^{-r}(b-a)^{-2}$ may be substituted whenever the bracket
is positive. The bracket is positive only when $t_r$ falls inside $(a,b)$
with room to spare, which confines this one-bump family to a bounded range of
depths; past that range one must use Lemma~\ref{lem:minorant}(ii) itself,
whose polynomials are not single-bump. That is the cost of insisting on an
explicit minorant, and the reason the lemma is stated as an existence result.
\end{remark}

\noindent
Each instance of Proposition~\ref{prop:fixeddepth} consumes finitely many
instances of Proposition~\ref{lem:higher} and exactly one instance of
Lemma~\ref{lem:minorant}. Both are proved above, which is what leaves the
proposition free of any hypothesis on the eigenvalue profile. No symmetry
between $W_0$ and $W_1$ is proved anywhere below, and none is used.

\subsection{\texorpdfstring{The corner block in every dimension: proof of
Theorem~\ref{thm:lower}}{The corner block in every dimension: proof of
Theorem 1.4}}\label{sec:corner}

The one-dimensional count of Theorem~\ref{thm:window} now produces a
$d$-dimensional block, by the tensor identity read as a counting statement.

\begin{proof}[Proof of Theorem~\ref{thm:lower}]
Let $d\ge2$, $A_0=B_0=(0,1)^d$, $c=\ell$, and $\eps<4^{-d}$.

\emph{Step 1 (the product window).} If $\lambda_{n_1},\dots,\lambda_{n_d}$
all lie in $(\frac14,\frac34)$, then
\[
\prod_{i=1}^d\lambda_{n_i}\in\bigl(4^{-d},(\tfrac34)^d\bigr),
\]
the endpoints being the products of the endpoints and both inclusions
strict.

\emph{Step 2 (the window sits inside the plunge).} On the lower side,
$4^{-d}>\eps$ is the hypothesis. On the upper side we need
$(\frac34)^d<1-\eps$, and since $\eps<4^{-d}$ it suffices that
$1-(\frac34)^d-4^{-d}>0$; this holds for every $d\ge2$, the tightest case
being $d=2$, where $1-\frac9{16}-\frac1{16}=\frac38>0$. Hence
$\bigl(4^{-d},(\frac34)^d\bigr)\subset(\eps,1-\eps)$.

\emph{Step 3 (counting the block).} By \eqref{eq:prodcount} every one of the
$M_a^{\,d}$ multi-indices $(n_1,\dots,n_d)$ with all $d$ entries drawn from
the $M_a$ indices of \eqref{eq:Ma} is counted by $\Lambda_\eps(c)$; distinct
multi-indices are distinct eigenvalue slots, so
$\Lambda_\eps(c)\ge M_a^{\,d}$. Note that Steps 1--2 give membership in the
\emph{open} interval $(\eps,1-\eps)$, so this is the count of
Definition~\ref{def:plunge} and not the larger half-open count of
Remark~\ref{rem:halfopen}.

\emph{Step 4 (inserting the one-dimensional count).} $M_a$ is a nonnegative
integer, so Theorem~\ref{thm:window} gives
$M_a\ge(c_0\ln\ell-C_0)_+$; and $t\mapsto t^{\,d}$ is increasing on
$[0,\infty)$, so
\[
\Lambda_\eps(c)\ \ge\ M_a^{\,d}\ \ge\
\bigl((c_0\ln\ell-C_0)_+\bigr)^{d}\ =\ \Omega\bigl((\log c)^d\bigr),
\]
the last equality because $c=\ell$ and $c_0>0$. \emph{The positive part is
needed:} for $d$ even, $x\mapsto x^d$ is not monotone on all of $\R$, and at
small $\ell$ the bracket $c_0\ln\ell-C_0$ is negative while $M_a$ may be $0$,
so the inequality would fail if the bracket were raised to an even power as
it stands.
\end{proof}

\begin{remark}[what changes with the dimension]\label{rem:lift1}
At $d=2$ the threshold $\eps<4^{-d}$ reads $\eps<\frac1{16}$, and Steps
1--4 are the $d=2$ argument of this section, unchanged; the statement for
$d\ge3$ is proved here. Two things change with $d$, and both are visible in
the statement: the $\eps$-threshold $4^{-d}$ contracts geometrically, so
Theorem~\ref{thm:lower} says nothing at a fixed $\eps$ once
$d>\log_4(1/\eps)$; and the exponent on the logarithm rises with $d$, which
is the assertion. The tensor identity itself (Lemma~\ref{lem:tensor},
Remark~\ref{rem:cube}) is dimension-free and needs no extension.
\end{remark}

\subsection{An unconditional edge law}\label{sec:edge}

The window count also yields a lower bound on the
Schatten mass $N_q(\ell,b)=\sum_n\lambda_n^{\,q}$ that assumes nothing about
the eigenvalue profile. A bound of this
shape is otherwise available only conditionally on Landau--Widom
asymptotics; Theorem~\ref{thm:window} trades that conditioning for a
citation, the determinant asymptotics of Theorem~\ref{thm:basorwidom}, which
is a published theorem rather than an assumed profile. That is a change in
the \emph{kind} of input and not in its effectiveness: both routes leave the
threshold beyond which the bound has content unquantified, here through
$C_0$ (Section~\ref{sec:tier1}). We record it here because later steps of the
program consume it.

\begin{theorem}[Unconditional $N_q$]\label{thm:nq}
For $0<q\le\frac12$ and $b=1$,
\begin{equation}\label{eq:nq}
N_q(\ell,b)\ \ge\ \ell b+\delta_q\bigl(c_0\ln\ell-C_0\bigr),
\qquad \delta_q=\bigl(\tfrac34\bigr)^q-\tfrac34\ \ge\ 0.116,
\end{equation}
with $c_0,C_0$ as in Theorem~\ref{thm:window}. In particular
$N_q(\ell,b)-\ell b\gtrsim\ln(\ell b)$ with a constant independent of
$q\in(0,\frac12]$; the constant in \eqref{eq:nq} is
$c_3=\delta_qc_0\ge\delta_{1/2}c_0=\frac{4\sqrt3-6}{15\pi^2}$.
\end{theorem}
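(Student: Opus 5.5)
The plan is to compare $\lambda^q$ with $\lambda$ termwise and to collect a definite surplus on the fixed window $(\frac14,\frac34)$, where Theorem~\ref{thm:window} supplies $\gtrsim\ln\ell$ eigenvalues. Put $f_q(\lambda)=\lambda^q-\lambda$ on $[0,1]$. For $0<q\le\frac12$ we have $\lambda^q\ge\lambda$ on $[0,1]$, so $f_q\ge0$ there. Hence
\[
N_q(\ell,1)=\sum_n\lambda_n^{\,q}
=\sum_n\lambda_n+\sum_n f_q(\lambda_n)
\ \ge\ \Tr S_\ell+\sum_{\lambda_n\in(1/4,3/4)}f_q(\lambda_n).
\]
The first term is exactly $\Tr S_\ell=\ell=\ell b$, as recorded in the proof of Proposition~\ref{prop:trace}. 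The sum over $n$ converges because $\lambda_n^q\le\lambda_n^{q}$ summability is in turn guaranteed by Lemma~\ref{lem:mass}.

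The next step is to bound $f_q$ from below on the window. The function $f_q$ is concave on $(0,1)$, since $q<1$. Its minimum over $[\frac14,\frac34]$ is therefore attained at an endpoint: it equals $\min\bigl(4^{-q}-\frac14,\ (\frac34)^q-\frac34\bigr)$. The right endpoint is the smaller one. Indeed,
\[
\bigl(4^{-q}-\tfrac14\bigr)-\bigl((\tfrac34)^q-\tfrac34\bigr)=4^{-q}-(\tfrac34)^q+\tfrac12,
\]
and for $q\le\frac12$ we have $4^{-q}\ge\frac12$ and $(\frac34)^q\le1$, so this difference is $\ge0$. Thus $f_q\ge\delta_q=(\frac34)^q-\frac34$ on the window, and
\[
N_q\ \ge\ \ell b+\delta_q M_a .
\]

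To finish, insert Theorem~\ref{thm:window}, $M_a\ge c_0\ln\ell-C_0$. This is legitimate since $\delta_q\ge0$, so multiplying the inequality by $\delta_q$ preserves its direction. The result is \eqref{eq:nq}. For the uniformity in $q$, note that $(\frac34)^q$ is decreasing in $q$. Hence $\delta_q\ge\delta_{1/2}=\frac{\sqrt3}2-\frac34=\frac{2\sqrt3-3}{4}=0.1160\ldots$, which gives the stated $0.116$. Multiplying by $c_0=\frac8{15\pi^2}$ gives $c_3\ge\frac{(2\sqrt3-3)\cdot8}{4\cdot15\pi^2}=\frac{4\sqrt3-6}{15\pi^2}$.

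There is no real obstacle here. All the depth is carried by Theorem~\ref{thm:window}, and the only point needing care is the endpoint comparison on the window, handled above.
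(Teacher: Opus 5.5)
Your proof is correct and follows the paper's argument: split off $\Tr S_\ell=\ell$, keep only the window $(\tfrac14,\tfrac34)$, bound $\lambda^q-\lambda$ below there by its value at $\tfrac34$, then insert Theorem~\ref{thm:window} and use monotonicity of $\delta_q$. The only difference is that you locate the minimum by concavity plus an endpoint comparison, whereas the paper notes that $\lambda^q-\lambda$ is decreasing past its critical point $q^{1/(1-q)}\le\tfrac14$. Your summability sentence is garbled, but it is not needed: all terms are nonnegative and $\sum_n\lambda_n=\ell<\infty$, so the splitting is valid in $[0,\infty]$.
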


\begin{proof}
$N_q-\ell b=\sum_n(\lambda_n^{\,q}-\lambda_n)$ since $\Tr S_\ell=\ell b$,
and each term is $\ge0$ for $q\le1$. Restricting the sum to
$\lambda_n\in(\frac14,\frac34)$ and using that
$f(\lambda)=\lambda^q-\lambda$ satisfies
$f\ge\min_{[1/4,3/4]}f=f(\frac34)=(\frac34)^q-\frac34=\delta_q$ there
(the minimum is at the right endpoint, since $f$ decreases past its interior
maximum $\lambda_\ast=q^{1/(1-q)}\le\frac14$ for $q\le\frac12$),
\[
N_q-\ell b\ \ge\!\!\sum_{\lambda_n\in(1/4,3/4)}\!\!(\lambda_n^{\,q}-\lambda_n)
\ \ge\ \delta_q\,M_a\ \ge\ \delta_q(c_0\ln\ell-C_0).
\]
Finally $\delta_q$ decreases in $q$ on $(0,\frac12]$, from $\frac14$ toward
$(\frac34)^{1/2}-\frac34=0.116\ldots$, giving the uniform constant.
\end{proof}

No positive part is needed in \eqref{eq:nq}: the bound is affine in the
bracket, and $N_q\ge\ell b$ holds outright. The estimate is deliberately
crude, since the proof retains only the eigenvalues in the fixed window
$(\frac14,\frac34)$ and discards the rest of the excess.

\subsection{Where the moment method stops}\label{sec:stops}

Everything above is proved, given the citation of
Theorem~\ref{thm:basorwidom}, and everything above is confined to
\emph{fixed} depth. The confinement is structural rather than a matter of
presentation. Lemma~\ref{lem:minorant}(ii) supplies a finite degree for every
fixed depth $u$ but no degree estimate uniform as $u$ tends to infinity, and
the fixed-order trace constants of Proposition~\ref{lem:higher} deteriorate
with the degree. The argument therefore yields no moving-depth estimate, and
in particular does not reach $u\to L=\ln\frac1\eps$ as $\eps\downarrow0$.
That is the quantifier structure recorded at
Proposition~\ref{prop:fixeddepth} and in Remark~\ref{rem:mquant}: each
instance is proved, none is uniform. No rate is asserted, proved or used for
the growth of the required degree with $u$.

This is the boundary of what the present method reaches, and what lies past
it should be named precisely.

A lower bound matching the $c^{\,d-1}LR$ order of
Corollary~\ref{cor:kdl} would require one-dimensional lower-half density
estimates in a depth range that grows with $\ln\frac1\eps$. The fixed-window
argument used here does not reach such a range, for the reason just given.

Estimates of that kind are not absent from the literature. For the cube pair,
\cite[Thm.~1.1]{KDL} bounds $\Lambda^-_\eps$ from below by a constant
multiple of
$c^{\,d-1}\log\frac1\eps\,\log\bigl(\alpha_dc/\log\frac1\eps\bigr)$ on the
deep polynomial range $\alpha_d^{-c}<\eps<c^{-\alpha_d}$, and determines its
order outright for $\eps\le\alpha_d^{-c}$. What is missing, as far as we
know, is an estimate of that shape holding \emph{uniformly} for $\eps$ moving
between a fixed threshold and that deep regime. Obtaining one by the
fixed-window trace method of this section is a separate problem, since that
method is confined to fixed depth by the degree growth just described.
Nothing above depends on it.

\begin{remark}[what is, and is not, uniform in $m$]\label{rem:mquant}
Two limitations of Proposition~\ref{lem:higher} bear on what is claimed
later, and it matters exactly where they bite.

\emph{The bound is not uniform in $m$.} The remainder prefactor in Step~2
carries $(16/3)^{2m}$, inherited from Cauchy's estimate on the circle
$|\sigma|=\frac3{16}$ in Lemma~\ref{lem:extract}. Nothing in
\eqref{eq:higherm} therefore survives $m$ growing with $\ell$.
Proposition~\ref{lem:higher} asserts a family of statements, one for each
fixed $m$; it is not a statement about the family, and no result below reads
it as one.

\emph{The $O_m(1)$ is finite but not effective.} The threshold $\ell_1$ rests
on the implied constant in the $O(\ln s/s)$ of
Theorem~\ref{thm:basorwidom}, which \cite{Charlier21} states to be uniform
but does not quantify. This is the same non-effectivity that $C_0$ carries in
Theorem~\ref{thm:window}, and from the same source. The leading coefficient
$\beta_m/\pi^2$ is exact and is unaffected.
\end{remark}

\section{What breaks off the box class}\label{sec:limits}

What was used and what was not is set out here, since the purpose of this
step of the program is to expose the mechanism for the general case.

\begin{enumerate}[label=(\alph*),itemsep=2pt]
\item\emph{Where the product structure entered.} Twice, and only twice: the
telescoping \eqref{eq:telescope} of $\ind{R^c}$, which needs $R$ to be a
box, and the factorization $Q_{B^\circ}=\bigotimes_mQ_{B^\circ_m}$, which
needs $B^\circ$ to be a box. Everything else is geometry-free: the plunge
reduction, Rotfel'd assembly, the modulation and translation normalizations,
tensor multiplicativity, the tangential mass lemma, and the one-dimensional
input.
\item\emph{Slanted parallelepipeds.} If $V\in GL(d,\R)$ and
$f\mapsto|\det V|^{1/2}f(Vx)$ is the associated unitary, it conjugates $P_E$
to $P_{V^{-1}E}$ and $Q_F$ to $Q_{V^{\mathsf T}F}$. Hence
Theorem~\ref{thm:upper} holds verbatim when every $R^{(i)}$ is a translate
of a $V$-image of an axis box and every $Q^{(j)}$ a translate of a
$V^{-\mathsf T}$-image of an axis box, for one common $V$, that is, for a
\emph{common-orientation} pair of parallelepiped families. Mixed
orientations are not covered: a single linear change cannot simultaneously
rectify boxes of different orientations, and after rectifying $A$ the
frequency boxes become slanted, destroying the tensorization of $Q_B$.
\item\emph{One general set.} \cite[Thm.~1.3]{KDL} allows
one of the two sets to be arbitrary bounded with finite upper Minkowski
boundary content, provided the other is a finite union of parallelepipeds
with disjoint interiors. Our method does not reproduce this: a non-product
$B$ breaks \eqref{eq:tensor} even for $A$ a box. \emph{This is the precise
sense in which \cite{KDL} is stronger than the present paper on the upper
side}, and the box-class corollary we do prove is contained in theirs.
At \emph{fixed} $\eps$ a single logarithm in $c$ is in fact already available
with no box on either side, by the trace route of \cite{HIMrough} applied to
any two sets of finite perimeter, at the cost of a factor $\eps^{-1}$ in
place of $L$; what \eqref{eq:conj} asks for beyond this is the joint
dependence on $c$ and $\eps$.
Recovering the general one-sided statement, and with it the full conjecture,
is the task of the subsequent steps, whose plan is: a
partition of $\partial(cA_0)$ into patches of tangential diameter
$\asymp\sqrt c$ (the scale at which the quadratic deviation of a $C^{1,1}$
boundary from its tangent plane produces only an $O(1)$ phase across the
frequency support); a straightening of each patch, reducing $T$-pieces to
perturbed copies of the flat tensor model \eqref{eq:tensor}; and a
frequency-side patching of $\partial B_0$ producing, per patch pair, an
effective one-dimensional band in the normal direction. This paper supplies
the flat model those reductions must land on; the curvature and cross-patch
error estimates in Schatten $p<1$ are the open work, and are not claimed
here.
\item\emph{Sharpness of the tangential input.} The leading constant $2$ in
Lemma~\ref{lem:mass}, from Markov at level $\frac12$, is the only place the
head of the tangential spectrum is touched; replacing it by the
asymptotically correct $1+o(1)$ would improve the constant in
Theorem~\ref{thm:upper} by $2^{d-1}$, but is irrelevant to the conjecture.
\item\emph{Consistency.} For $d=1$, Theorem~\ref{thm:upper} has empty
products and reduces to the (sharper, intermediate) per-component form of
the main theorem of \cite{Afard1d}, and hence implies it;
for fixed $\eps$ and a single pair of boxes it gives
$\Lambda_\eps=O(c^{d-1}\log c)$, the multidimensional Landau--Widom order
\cite{LandauWidom}.
\end{enumerate}

\subsection{The scope of the lower bounds}\label{sec:scopelower}

Every lower bound in Section~\ref{sec:lower} is proved, from
Theorem~\ref{thm:basorwidom} and nothing else. What is proved and what is
claimed are not the same thing, and the two are separated here.

\emph{What is established.} The window count (Theorem~\ref{thm:window}), the
higher traces at every fixed order (Proposition~\ref{lem:higher}), the
fixed-depth density (Proposition~\ref{prop:fixeddepth}), the logarithmic
tensor block in every dimension (Theorem~\ref{thm:lower}) and the trace
estimate (Theorem~\ref{thm:nq}) assume nothing about the eigenvalue profile.
Their one external input is Theorem~\ref{thm:basorwidom}, a published
theorem, reached through Appendix~\ref{app:exact}. What
Theorem~\ref{thm:lower} delivers is a genuine $d$-fold product block of
plunge eigenvalues, of size $\Omega((\log c)^d)$.

\emph{What is not claimed: a matching order.} $\Omega((\log c)^d)$ is not a
matching lower bound of order $c^{\,d-1}LR$, and this paper does not prove
one. The two differ by a power of $c$.

\emph{What is not claimed: uniformity.} Proposition~\ref{lem:higher} is a
statement at each fixed order $m$, with constants that degrade in $m$, and
Proposition~\ref{prop:fixeddepth} is a statement at each fixed depth $u$,
with constants that degrade in $u$ (Remark~\ref{rem:mquant},
Section~\ref{sec:stops}). Neither is uniform over its family, and neither is
used here as though it were; $C_0$ of Theorem~\ref{thm:window} and $C_1(u)$
of Proposition~\ref{prop:fixeddepth} inherit the non-effectivity described
there. The leading coefficients $c_0$ and $\beta_m/\pi^2$ are exact
and explicit, and the ceiling for $c_1$ is exact; $c_0$ is optimal at
degree three (Lemma~\ref{lem:degree3}) and is \emph{not} the all-degree
ceiling for its window (Remark~\ref{rem:c0status}).

\emph{What is not claimed: priority for the upper bound's order.} The
$c^{\,d-1}LR$ order of Corollary~\ref{cor:kdl} is already established, for a
strictly larger geometric class and on the same range, by
\cite[Thm.~1.3]{KDL}; see Section~\ref{sec:priorwork}. The contribution here
is Theorem~\ref{thm:upper}'s all-parameter explicit form, together with the
pair reduction and telescoping tensorization of the off-diagonal factor that
yield this form for finite unions of boxes.

\subsection{The remaining steps}\label{sec:signposts}

The mechanism this paper isolates, exact tensor structure standing in for
almost-orthogonality, is what the remaining steps of the program deform.
The first carries the argument to curved spatial boundaries with the
frequency set still a box; the second does the reverse, curved frequency
sets over box spatial sets; the third treats the curved pair, where both
patchings run at once; and the fourth assembles the recursive skeletons that
the patch decompositions of the first three generate.

On the lower side, one direction is named here but not pursued.
\cite[Thm.~1.1]{KDL} supplies lower-half estimates of the matching order in a
deep polynomial regime for the cube pair, and the exact order below it.
Obtaining a comparable bridge from fixed depth to that regime, by the trace
method of Section~\ref{sec:stops} or otherwise, is a separate problem, and
nothing in this paper depends on it.

\appendix

\section{\texorpdfstring{Ky Fan and Rotfel'd inequalities, as
used}{Ky Fan and Rotfeld inequalities, as used}}\label{app:kyfan}

Only two facts about singular values of sums are used, and only in the
quasi-Banach range. We state them in the form in which they are applied and
record where.

\begin{theorem}[Rotfel'd \cite{Rotfeld}; cf.\ {\cite[Thm.~2.8]{SimonTI}}]
\label{thm:rotfeld}
Let $X,Y$ be compact operators between separable Hilbert spaces and let
$f:[0,\infty)\to[0,\infty)$ be concave and non-decreasing with $f(0)=0$.
Then
\[
\sum_{n\ge1}f\bigl(s_n(X+Y)\bigr)\ \le\
\sum_{n\ge1}f\bigl(s_n(X)\bigr)+\sum_{n\ge1}f\bigl(s_n(Y)\bigr),
\]
both sides being allowed the value $+\infty$.
\end{theorem}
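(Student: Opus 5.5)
The plan is to reduce the general concave $f$ to a family of extremal concave functions, for each of which subadditivity follows from a variational (Ky Fan / Eckart--Young) formula, and then to integrate.

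\emph{Step 1 (representation of $f$).} Let $f$ be concave and non-decreasing on $[0,\infty)$ with $f(0)=0$. Then $f$ is continuous on $(0,\infty)$. Its right derivative $f'_+$ is non-increasing and non-negative there. Put $b:=f(0+)\ge0$; this is the only possible discontinuity, a jump at the endpoint $0$. Put $a:=\lim_{u\to\infty}f'_+(u)\ge0$. Writing $f'_+(u)=a+\mu((u,\infty))$ for a positive Borel measure $\mu$ on $(0,\infty)$, one gets for all $t\ge0$
\[
f(t)=b\,\ind{(0,\infty)}(t)+a\,t+\int_{(0,\infty)}\min(t,s)\,d\mu(s).
\]
Summing over the singular values and using Tonelli (all terms are non-negative, and the value $+\infty$ is allowed) gives
\[
\sum_nf(s_n(Z))=b\,\operatorname{rank}Z+a\|Z\|_1+\int\Phi_s(Z)\,d\mu(s),
\qquad
\Phi_s(Z):=\sum_n\min(s_n(Z),s).
\]
It therefore suffices to prove subadditivity separately for $\operatorname{rank}$, for $\|\cdot\|_1$, and for each $\Phi_s$, $s>0$. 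Rank subadditivity is elementary, since $\operatorname{ran}(X+Y)\subset\operatorname{ran}X+\operatorname{ran}Y$. The trace-norm triangle inequality is standard, via $\|Z\|_1=\sup|\Tr(ZU)|$ over contractions $U$, with both sides possibly infinite.

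\emph{Step 2 (the $K$-functional formula).} For compact $Z$ and $s>0$ I would prove
\[
\Phi_s(Z)=\inf\bigl\{\,s\,\operatorname{rank}R+\|Z-R\|_1:\ \operatorname{rank}R<\infty\,\bigr\}.
\]
For the upper bound, take $R$ to be the truncated Schmidt expansion of $Z$ at $k=\#\{n:s_n(Z)>s\}$, which is finite since $Z$ is compact. Then $s\,k+\sum_{n>k}s_n(Z)=\Phi_s(Z)$. For the lower bound I need the Ky Fan / Eckart--Young inequality $\sum_{n>k}s_n(Z)\le\|Z-R\|_1$ whenever $\operatorname{rank}R\le k$. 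This follows from the Weyl inequality $s_{n+k}(Z)\le s_n(Z-R)+s_{k+1}(R)=s_n(Z-R)$. Then for $\operatorname{rank}R=k$,
\[
\Phi_s(Z)\le s\,k+\sum_{n>k}s_n(Z)\le s\,k+\|Z-R\|_1 .
\]
I expect this step to be the main obstacle, because it is where the compact, infinite-dimensional setting must be handled with care. I would prove Weyl's inequality through the minimax characterization $s_{m}(Z)=\min_{\operatorname{codim}V<m}\|Z|_V\|$, applied on the intersection of kernels, rather than through finite matrices.

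\emph{Step 3 (subadditivity of $\Phi_s$).} Given $\eta>0$, choose finite-rank operators $R_X$ and $R_Y$ that are $\eta$-optimal in Step~2 for $X$ and $Y$. Then $R_X+R_Y$ has rank at most $\operatorname{rank}R_X+\operatorname{rank}R_Y$, and
\[
\|X+Y-R_X-R_Y\|_1\le\|X-R_X\|_1+\|Y-R_Y\|_1 .
\]
Inserting $R_X+R_Y$ into the infimum for $X+Y$ gives $\Phi_s(X+Y)\le\Phi_s(X)+\Phi_s(Y)+2\eta$, and letting $\eta\downarrow0$ removes the error. Finally, integrating against $\mu$ and adding the rank and trace-norm terms with the non-negative weights $b$ and $a$ yields the theorem. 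The cases with infinite values need no separate treatment, since every inequality above is between quantities in $[0,\infty]$.
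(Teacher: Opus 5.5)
Your proof is correct. It takes a genuinely different route from the paper only in the sense that the paper gives no proof at all: Theorem~\ref{thm:rotfeld} is quoted from Rotfel'd and Simon, and only the special case $f(t)=t^p$ (Corollary~\ref{cor:rotfeld}) is derived from it.

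You decompose $f$ over the extremal concave functions $\ind{(0,\infty)}$, $t$ and $\min(t,s)$. This reduces the theorem to three subadditivities:
\begin{itemize}
\item rank, which is elementary linear algebra;
\item the trace norm, which is standard;
\item $\Phi_s(Z)=\sum_n\min(s_n(Z),s)$, which you obtain from the variational formula $\Phi_s(Z)=\min_R\{s\operatorname{rank}R+\|Z-R\|_1\}$.
\end{itemize}
The lower half of that formula rests on the Weyl inequality $s_{n+k}(Z)\le s_n(Z-R)$ for $\operatorname{rank}R\le k$. This is exactly the general case of the inequality $s_{m+n-1}(X+Y)\le s_m(X)+s_n(Y)$ that Appendix~\ref{app:kyfan} records as standard; the paper itself invokes only its case $m=n=1$.

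What your route buys is self-containedness: Rotfel'd becomes a consequence of Ky Fan plus the trace-norm triangle inequality, with no further external input. It also shows how little the paper needs. For its only application, $f(t)=t^p$ with $0<p<1$, your representation has $a=b=0$ and $d\mu(s)=p(1-p)s^{p-2}\,ds$, so the rank term never appears and only the $\Phi_s$ step is used. For $p=1$ the theorem is just the trace-norm triangle inequality. The citation buys brevity and nothing else.

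Three small points, none of them gaps.
\begin{itemize}
\item The infimum in Step~2 is attained by the Schmidt truncation, so the $\eta$ in Step~3 is unnecessary.
\item You never need $\|\cdot\|_1$ of a non-trace-class operator via $\sup|\Tr(ZU)|$, which would require restricting to finite-rank $U$. Whenever the right-hand side is finite, the operators involved ($X-R_X$ and $Y-R_Y$, or $X$ and $Y$ themselves) are already trace class.
\item The identity $f'_+(u)=a+\mu((u,\infty))$ uses right-continuity of $f'_+$. This holds for concave $f$; alternatively, modifying $f'_+$ on a countable set does not change $\int_0^t f'_+$.
\end{itemize}
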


\begin{corollary}[quasi-norm subadditivity]\label{cor:rotfeld}
For $0<p\le1$ and compact $X_1,\dots,X_r$,
\[
\Bigl\|\sum_{k=1}^rX_k\Bigr\|_p^p\ \le\ \sum_{k=1}^r\|X_k\|_p^p .
\]
\end{corollary}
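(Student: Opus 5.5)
The statement is Corollary~\ref{cor:rotfeld}, and the plan is to derive it from Theorem~\ref{thm:rotfeld} by choosing $f(s)=s^p$ and then inducting on the number $r$ of summands.

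\emph{Step 1 (the admissible function).} For $0<p\le1$, take $f(s)=s^p$ on $[0,\infty)$. It satisfies $f(0)=0$ and is non-decreasing. It is also concave: for $p<1$ its second derivative $p(p-1)s^{p-2}$ is $\le0$ on $(0,\infty)$, it is continuous at $0$, and for $p=1$ it is linear. With this choice, $\sum_nf(s_n(X))=\|X\|_p^p$, with the value $+\infty$ allowed. Theorem~\ref{thm:rotfeld} therefore gives $\|X+Y\|_p^p\le\|X\|_p^p+\|Y\|_p^p$ for any two compact operators $X,Y$.

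\emph{Step 2 (induction on $r$).} The case $r=1$ is trivial. For the inductive step, note that $Y=\sum_{k<r}X_k$ is compact, since a finite sum of compact operators is compact. Apply the two-term inequality of Step~1 to $Y+X_r$. Then bound $\|Y\|_p^p$ by the inductive hypothesis. All quantities lie in $[0,\infty]$, so the inequalities chain with no convergence issues. If some $\|X_k\|_p^p=\infty$, the right side is infinite and there is nothing to prove.

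\emph{Main obstacle.} The only point that needs checking is that $s\mapsto s^p$ meets the hypotheses of Theorem~\ref{thm:rotfeld} at the endpoint $s=0$, where the derivative blows up for $p<1$. Concavity on the closed half-line still holds, because a continuous function that is concave on the open interval $(0,\infty)$ remains concave on $[0,\infty)$. Beyond this, the corollary is a direct specialization of Theorem~\ref{thm:rotfeld}.
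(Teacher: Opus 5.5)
Your proposal is correct and follows the paper's own proof: take $f(t)=t^{p}$, which is concave, non-decreasing and vanishes at $0$ for $0<p\le1$, apply Theorem~\ref{thm:rotfeld}, and induct on $r$. Your remarks on the compactness of the partial sums, on values of $+\infty$, and on concavity extending to the endpoint $0$ are sound details that the paper leaves implicit.
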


\begin{proof}
$f(t)=t^{\,p}$ is concave and increasing on $[0,\infty)$ with $f(0)=0$ for
$0<p\le1$; apply Theorem~\ref{thm:rotfeld} and induct on $r$.
\end{proof}

This is Lemma~\ref{lem:svcalc}(ii), and it is used three times: over the $MK$
component pairs in Proposition~\ref{prop:pairs}, over the $d$ directions of
the telescoping \eqref{eq:telescope} in Proposition~\ref{prop:tensor}, and,
inside the one-dimensional input \cite{Afard1d} rather than here, over the
dyadic normal scales. The first two uses cost an additive term and no
logarithm, since the sums they generate run over $MK$ and over $d$, both
independent of $\eps$ and of $c$. The third is where the logarithm enters:
the dyadic normal scales number $O(\log)$, and that count is the
$4.5\logp(\ell p)$ term of Proposition~\ref{prop:1d}. This is what stands
behind the claim of Section~\ref{sec:mech}: subadditivity is invoked over
$\eps$-independent index sets everywhere \emph{except} once, inside the
one-dimensional input, and that single exception is the single logarithm.

The second fact is Ky Fan's inequality
$s_{m+n-1}(X+Y)\le s_m(X)+s_n(Y)$, of which we use only the case
$m=n=1$, $\|X+Y\|\le\|X\|+\|Y\|$, and the consequence recorded as
Lemma~\ref{lem:svcalc}(i), $s_n(UXV)\le\|U\|\,\|V\|\,s_n(X)$. Both are
standard; no other rearrangement inequality enters.

\begin{remark}[why subadditivity and not orthogonality]
Corollary~\ref{cor:rotfeld} is the \emph{only} general inequality available
for sums in the range $0<p<1$: the triangle inequality fails, and there is
no Cotlar--Stein substitute, because independent pieces genuinely add
Schatten mass (Section~\ref{sec:mech}). This paper does not improve on it;
it arranges the decomposition so that subadditivity is invoked $O(d\cdot MK)$
times rather than $O(\log)$ times.
\end{remark}

\section{Explicit constants}\label{app:constants}

The auxiliary constants in Corollaries~\ref{cor:kdl} and~\ref{cor:matched}
follow from the one-sided elementary inequalities displayed in their proofs.
Each decimal approximation is rounded in the direction required by the
corresponding estimate. No optimization of these constants is claimed.

\section{\texorpdfstring{The trace machinery, and the $m=2,3$ constants
exactly}{The trace machinery, and the m = 2, 3 constants
exactly}}\label{app:exact}

This appendix supplies the analytic input to Section~\ref{sec:lower}, and
what it supplies is stated at the outset, since the paper's claims are
calibrated to it.

\emph{What is proved here.} Lemma~\ref{lem:extract} gives
$\Tr(S_\ell^{\,k})$ at \emph{every} fixed $k\ge1$, with an explicit
remainder. Lemma~\ref{lem:harmonic} evaluates the alternating harmonic sum
that converts those into the Beta values $\beta_m=B(m,m)$; it is an exact
combinatorial identity with no analytic content. Together they prove the
asymptotics of $\Tr(\varphi^m)$ at every fixed $m$, which is
Proposition~\ref{lem:higher}, since $\Tr(\varphi^m)$ is the finite combination
$\sum_{j=0}^m(-1)^j\binom mj\Tr(S_\ell^{\,m+j})$ of the former.
Lemma~\ref{lem:exacttrace} then does more at the two orders the window count
consumes, $m=2$ and $m=3$: it evaluates the constant terms in closed form and
sharpens the remainder to a power saving.

\emph{What is not proved here.} The remainder bound
$k(\tfrac{16}3)^k\delta_\ell^{\,\vartheta}\mathsf M_\ell^{1-\vartheta}$ of
Lemma~\ref{lem:extract} degrades geometrically in $k$, so nothing in this
appendix survives $m$ growing with $\ell$. Every statement it feeds is
therefore a statement at fixed order. The results that consume it,
Proposition~\ref{lem:higher} and, through it,
Proposition~\ref{prop:fixeddepth}, are stated at fixed $m$ and at fixed
depth $u$ respectively, with constants allowed to depend on them
(Remark~\ref{rem:mquant}).

\medskip\noindent\textbf{Notation for this appendix.} Sans-serif letters
denote constants and functions of the determinant expansion and are distinct
from every symbol of Sections~\ref{sec:intro}--\ref{sec:limits}: in
particular $\mathsf{A}$ is a number, not the spatial set $A$; $\mathsf{B}$ is
a number, not the frequency set $B$ nor the Beta function $B(m,m)$ of
Proposition~\ref{lem:higher}; $\mathsf{G}$ is the Barnes $G$-function, not the
normal profile $G$ of \eqref{eq:G}; $\mathsf{H}_k$ is a harmonic number, not
the tensor factor $H^{(ij)}_m$ of \eqref{eq:GH}; and $\mathsf{c}_k$,
$\mathsf{K}_2$, $\mathsf{K}_3$ are not $c_0$, $c_1$, $c_3$ or the
component count $K$. The Joukowski radii $R$, $R_0$, $R_2$ of
Lemma~\ref{lem:extract} are likewise unrelated to the boxes $R^{(i)}$ of
Assumption~\ref{ass:boxes}. The determinant parameter is written $\sigma$,
the exponent produced by the two-constants theorem $\vartheta$, and $\zeta$
is the Riemann zeta function; none of these three occurs elsewhere in the
paper. Throughout, $S_\ell$ and $\varphi=S_\ell-S_\ell^2$ are as in
Section~\ref{sec:1d}, and
\[
\mathsf{A}:=1+\gamma_{\mathrm E}+\ln2\pi,
\qquad
\mathsf{H}_k:=1+\tfrac12+\cdots+\tfrac1k,\quad \mathsf{H}_0:=0 .
\]

\subsection{The cited input}\label{app:cited}

\begin{theorem}[Basor--Widom; as reproved by Charlier]\label{thm:basorwidom}
Let $K_s$ be the sine kernel $\sin s(x-y)/(\pi(x-y))$ on $(-1,1)$. For fixed
real $\sigma<1$, as $s\to+\infty$,
\[
\ln\det(I-\sigma K_s)=\frac{2s}\pi\ln(1-\sigma)
+\frac{\ln^2(1-\sigma)}{2\pi^2}\ln(4s)
+2\ln\bigl[\mathsf{G}(1+i\mathsf{z})\mathsf{G}(1-i\mathsf{z})\bigr]
+O\!\Bigl(\frac{\ln s}{s}\Bigr),
\]
where $\mathsf{G}$ is the Barnes $G$-function and
$\mathsf{z}=\mathsf{z}(\sigma)=-\ln(1-\sigma)/(2\pi)$. The error term is
uniform for $\sigma$ in compact subsets of $(-\infty,1)$.
\end{theorem}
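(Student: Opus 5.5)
Since the paper quotes this theorem rather than proving it, what follows is the route I would take to reprove it, following the Riemann--Hilbert method that Charlier uses. The starting point is that $K_s$ is an integrable kernel in the Its--Izergin--Korepin--Slavnov (IIKS) sense. Indeed,
\[
\frac{\sin s(x-y)}{\pi(x-y)}=\frac{e^{isx}e^{-isy}-e^{-isx}e^{isy}}{2\pi i(x-y)},
\]
so $\ln\det(I-\sigma K_s)$ is expressed through the solution $Y$ of a $2\times2$ Riemann--Hilbert problem with jump on $(-1,1)$ and oscillatory entries $e^{\pm isx}$. The plan is as follows.
\begin{enumerate}
\item Derive a differential identity in $\sigma$: $\partial_\sigma\ln\det(I-\sigma K_s)=-\Tr\bigl((I-\sigma K_s)^{-1}K_s\bigr)$, written as an explicit integral over $(-1,1)$ of entries of $Y$.
\item Evaluate $Y$ asymptotically, uniformly for $\sigma$ in a compact subset of $(-\infty,1)$.
\item Integrate the identity from $\sigma=0$, where the determinant equals $1$, to the given $\sigma$.
\end{enumerate}
Integrating in $\sigma$ rather than in $s$ avoids having to fix an integration constant at some finite reference length $s_0$; this is why the integration runs in $\sigma$.

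For the steepest descent I would proceed in four steps.
\begin{enumerate}
\item Write $1-\sigma=e^{-2\pi\mathsf z}$, so that $\mathsf z\in\R$ exactly when $\sigma<1$.
\item Factor the jump and open lenses above and below $(-1,1)$. The factors $e^{\pm 2isx}$ then decay off the real axis, leaving a constant jump $1-\sigma$ on $(-1,1)$.
\item Solve the resulting model problem with the Szeg\H{o}-type function
\[
D(\zeta)=\Bigl(\frac{\zeta-1}{\zeta+1}\Bigr)^{i\mathsf z}.
\]
This global parametrix already produces the bulk term $\frac{2s}{\pi}\ln(1-\sigma)$, which is the continuous Szeg\H{o}--Kac--Akhiezer term: the length $2s/\pi$ times $\ln(1-\sigma)$.
\item At $\pm1$, where the symbol $1-\sigma\ind{(-1,1)}$ jumps (a Fisher--Hartwig-type singularity with exponent $\pm i\mathsf z$), install local parametrices built from confluent hypergeometric functions. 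These are matched to the global parametrix on circles of fixed radius; the mismatch is $I+O(1/s)$ with explicit first-order terms involving $s^{\pm 2i\mathsf z}$.
\end{enumerate}
A small-norm argument for the ratio problem then gives $R=I+O(1/s)$, uniformly on compact $\sigma$-sets, because $\mathsf z$ ranges over a compact real set and the Gamma-function factors in the parametrix stay bounded.

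Substituting into the differential identity, the local parametrices yield
\[
\partial_\sigma\!\Bigl[\frac{\ln^2(1-\sigma)}{2\pi^2}\Bigr]\ln(4s)
\]
together with a term of the form $\partial_\sigma$ of $\ln\Gamma(1\pm i\mathsf z)$-type expressions. Integrating in $\sigma$, and using $\mathsf G(1+x)=\Gamma(x)\mathsf G(x)$ and the integral representation of $\ln\mathsf G$, assembles these into $2\ln[\mathsf G(1+i\mathsf z)\mathsf G(1-i\mathsf z)]$. The error after integration is $O(\ln s/s)$: the extra logarithm comes from the oscillatory factors $s^{\pm2i\mathsf z}$ in the subleading terms, which are only bounded, not small.

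I expect the main obstacle to be exactly this uniformity together with the constant term. First, the small-norm estimate must hold uniformly as $\sigma$ runs over a compact subset, including $\sigma\le0$, where $\mathsf z\le0$. Second, the $\sigma$-integral of the parametrix contribution must be identified with the Barnes $G$ expression. For the latter I would first check the identification against the known Dyson constant. I would then verify it for small $\sigma$ by comparing Taylor coefficients with the trace expansion $\ln\det(I-\sigma K_s)=-\sum_k\sigma^k\Tr K_s^k/k$.
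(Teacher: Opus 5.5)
The paper does not prove this theorem. It quotes \cite[Thm.~1.1]{Charlier21} at $m=1$ (originally \cite{BasorWidom83}), so you are offering a reproof where the paper has a citation. As a plan, your reproof is sound, and it is the standard Riemann--Hilbert route to this determinant. Its components are the IIKS form; lenses with no $g$-function, which is legitimate because $1-\sigma>0$ (the triangular--diagonal--triangular factorization of the jump exists, and $\mathsf z$ is real, so $|D_\pm|=e^{\mp\pi\mathsf z}$ on $(-1,1)$); the global parametrix $D^{\sigma_3}$; confluent hypergeometric parametrices at $\pm1$; and the $\sigma$-identity integrated from $\sigma=0$, where the determinant is $1$.

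The two routes buy different things. The paper uses the theorem only as hypothesis~(i) of Lemma~\ref{lem:extract}, a bound on real $\sigma\in[-\tfrac14,\tfrac14]$ of both signs. Your insistence on uniformity through $\sigma\le0$ is therefore exactly what is needed there. The citation is short, but its unquantified implied constant is the reason the paper gives for $C_0$ and $C_1$ being non-effective. Your route, with the small-norm constants tracked, could in principle make them effective.

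Three details should be corrected; none is fatal.

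\emph{The source of the logarithm.} Bounded unimodular factors $s^{\pm2i\mathsf z}$ multiplying $O(1/s)$ still give $O(1/s)$. A logarithm would come only from differentiating them in $\sigma$, and $-\Tr\bigl((I-\sigma K_s)^{-1}K_s\bigr)$ involves no $\sigma$-derivative of $Y$; the error enters only through $R^{-1}R'$. So a uniform small-norm bound gives $O(1/s)$ after integration, which implies the stated $O(\ln s/s)$. Your explanation of the logarithm is wrong, but it costs nothing.

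\emph{Assembling the Barnes constant.} Integrating a total $\sigma$-derivative of $\ln\Gamma$-terms returns $\ln\Gamma$, not $\ln\mathsf G$, and the functional equation is not what assembles the constant. The identity must deliver $\mathsf z$ times such a derivative, and the integral representation enters as
\[
\int_0^\beta x\,\frac{d}{dx}\ln\frac{\Gamma(1+x)}{\Gamma(1-x)}\,dx
=\ln\bigl[\mathsf G(1+\beta)\mathsf G(1-\beta)\bigr]+\beta^2,
\qquad \beta=i\mathsf z .
\]
The $\beta^2=-\mathsf z^2$ must then cancel against polynomial terms produced elsewhere.

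\emph{The proposed checks.} The Dyson constant is not a usable check. It belongs to $\sigma=1$, outside the theorem, where the leading order is $-s^2/2$. The expansion is also not uniform as $\sigma\uparrow1$, where $\mathsf z\to+\infty$. The Taylor check is independent only at order $\sigma^2$, and the paper already supplies it: \eqref{eq:trphi} gives $\Tr(S_\ell-S_\ell^2)-\pi^{-2}\ln\ell\to\pi^{-2}\mathsf A$, with $\mathsf A=1+\gamma_{\mathrm E}+\ln2\pi$. This matches the value $\mathsf c_2=-\mathsf A/\pi^2$ predicted by the theorem's $\sigma^2$ coefficient. The paper's higher traces are derived from the theorem, so they cannot serve as a check on it.
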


This is \cite[Thm.~1.1]{Charlier21} specialized to $m=1$, originally
\cite{BasorWidom83}. We quote it rather than reprove it. Its two-sidedness in
$\sigma$, over a segment symmetric about $\sigma=0$, is what makes
$\vartheta$ below a closed-form number rather than an unevaluated constant.
Rescaling $x\mapsto x/s$ and then by $\pi$ carries $K_s$ to $\sinc$ on an
interval of length $2s/\pi$, so $S_\ell\cong K_s$ unitarily with
$s=\pi\ell/2$; under that dictionary Theorem~\ref{thm:basorwidom} reads
\begin{equation}\label{eq:detasym}
\ln\det(I-\sigma S_\ell)=\mathsf{P}_\ell(\sigma)+\mathsf{E}_\ell(\sigma),
\qquad
\mathsf{P}_\ell(\sigma):=\ell\ln(1-\sigma)
+\frac{\ln^2(1-\sigma)}{2\pi^2}\ln\ell+\mathsf{C}(\sigma),
\end{equation}
with
$\mathsf{C}(\sigma):=2\ln[\mathsf{G}(1+i\mathsf{z})\mathsf{G}(1-i\mathsf{z})]
+\frac{\ln^2(1-\sigma)}{2\pi^2}\ln2\pi$, and the theorem asserts exactly
$\mathsf{E}_\ell(\sigma)=O(\ln\ell/\ell)$, uniformly on compact real
$\sigma$-sets.

\subsection{Three lemmas, all proved here}\label{app:twolemmas}

\begin{lemma}[crude bound on the whole disc]\label{lem:crude}
Let $0<\rho<1$. For every $\ell\ge1$ and every complex $\sigma$ with
$|\sigma|\le\rho$,
\[
\bigl|\ln\det(I-\sigma S_\ell)-\ell\ln(1-\sigma)\bigr|
\ \le\ \frac{\rho^2}{1-\rho}\,\Tr(S_\ell-S_\ell^2).
\]
Consequently $|\mathsf{E}_\ell(\sigma)|\le\mathsf{M}_\ell=O(\ln\ell)$ for all
$|\sigma|\le\frac12$ and all $\ell\ge1$, where
$\mathsf{M}_\ell:=\frac12\Tr(\varphi)+\frac{\ln^22}{2\pi^2}\ln\ell
+\sup_{|\sigma|\le1/2}|\mathsf{C}(\sigma)|$.
\end{lemma}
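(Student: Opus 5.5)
The plan is to work spectrally. $S_\ell$ is positive trace class with eigenvalues $\lambda_n\in[0,1)$, so for $|\sigma|\le\rho<1$ the Fredholm determinant is $\det(I-\sigma S_\ell)=\prod_n(1-\sigma\lambda_n)$. Taking the principal logarithm termwise is legitimate since $|\sigma\lambda_n|<1$, and it gives
\[
\ln\det(I-\sigma S_\ell)-\ell\ln(1-\sigma)=\sum_n\bigl[\ln(1-\sigma\lambda_n)-\lambda_n\ln(1-\sigma)\bigr],
\]
using $\Tr S_\ell=\sum_n\lambda_n=\ell$. Both series converge absolutely, and rearrangement into a single sum is justified because each summand will be bounded by a multiple of $\lambda_n(1-\lambda_n)$, which is summable.

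The key step is a pointwise bound on $f(\lambda):=\ln(1-\sigma\lambda)-\lambda\ln(1-\sigma)$ for $\lambda\in[0,1]$. Expanding in powers of $\sigma$ gives
\[
f(\lambda)=\sum_{k\ge1}\frac{\sigma^k}{k}\bigl(\lambda-\lambda^k\bigr).
\]
The $k=1$ term vanishes. For $k\ge2$ we have $0\le\lambda-\lambda^k=\lambda(1-\lambda)(1+\lambda+\dots+\lambda^{k-2})\le(k-1)\lambda(1-\lambda)$, and $(k-1)/k\le1$. Hence
\[
|f(\lambda)|\le\lambda(1-\lambda)\sum_{k\ge2}\rho^k=\frac{\rho^2}{1-\rho}\,\lambda(1-\lambda).
\]
Summing over $n$ and using $\sum_n\lambda_n(1-\lambda_n)=\Tr(S_\ell-S_\ell^2)$ gives the first inequality. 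This is the only real computation, and I expect it to be routine; the one point needing care is that the eigenvalues avoid $1$, so the logarithms are principal and no branch issue arises.

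For the consequence, I take $\rho=\tfrac12$, so that $\rho^2/(1-\rho)=\tfrac12$. By \eqref{eq:detasym}, $\mathsf E_\ell(\sigma)=\ln\det(I-\sigma S_\ell)-\mathsf P_\ell(\sigma)$, and the triangle inequality gives
\[
|\mathsf E_\ell(\sigma)|\le\tfrac12\Tr\varphi+\frac{|\ln(1-\sigma)|^2}{2\pi^2}\ln\ell+|\mathsf C(\sigma)|.
\]
The point to watch is the middle term at complex $\sigma$. There I would use $|\ln(1-\sigma)|\le\sum_k|\sigma|^k/k=-\ln(1-|\sigma|)\le\ln2$ for $|\sigma|\le\tfrac12$, together with $\ln\ell\ge0$ for $\ell\ge1$.

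It remains to see that $\sup_{|\sigma|\le1/2}|\mathsf C(\sigma)|$ is finite. On the closed disc, $1-\sigma$ stays in $\{\Re w\ge\tfrac12\}$, so $\mathsf z$ is analytic there. The only singularities of $\ln[\mathsf G(1+i\mathsf z)\mathsf G(1-i\mathsf z)]$ come from zeros of the Barnes function, which sit at $1\pm i\mathsf z\in\{0,-1,-2,\dots\}$; that forces $\mathsf z$ to be purely imaginary of modulus at least $1$. But $|\mathsf z|\le\ln2/(2\pi)<1$ on the disc. So $\mathsf C$ is analytic on a neighbourhood of the compact disc and hence bounded there; the branch of the logarithm is fixed by continuity from $\sigma=0$ on this simply connected set. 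Finally $\mathsf M_\ell=O(\ln\ell)$ follows from \eqref{eq:trbounds}.
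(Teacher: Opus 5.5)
Your proof is correct and follows the paper's argument essentially step for step: you expand $h_\sigma(t)=\ln(1-\sigma t)-t\ln(1-\sigma)$ spectrally so the $k=1$ term cancels, bound $|t-t^k|\le(k-1)t(1-t)$, and then take $\rho=\tfrac12$ and bound the three pieces of $\mathsf P_\ell$ separately, using the same Barnes-zero argument ($|\mathsf z|\le\ln2/(2\pi)<1$) to get $\sup|\mathsf C|<\infty$. You also make explicit two points the paper states without comment, namely why $|\ln(1-\sigma)|\le-\ln(1-|\sigma|)\le\ln2$ holds at complex $\sigma$ and why the branch of $\ln[\mathsf G(1+i\mathsf z)\mathsf G(1-i\mathsf z)]$ is well defined on the disc.
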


\begin{proof}
$S_\ell$ is trace class with $\lambda_n\in(0,1)$ and $\sum_n\lambda_n=\ell$,
so $\det(I-\sigma S_\ell)=\prod_n(1-\sigma\lambda_n)$ converges and no factor
vanishes for $|\sigma|<1$. Hence
\[
\ln\det(I-\sigma S_\ell)-\ell\ln(1-\sigma)=\sum_nh_\sigma(\lambda_n),
\qquad h_\sigma(t):=\ln(1-\sigma t)-t\ln(1-\sigma),
\]
and $h_\sigma(0)=h_\sigma(1)=0$, the double vanishing that removes the
term linear in $\ell$. Expanding both logarithms,
$h_\sigma(t)=\sum_{k\ge1}\frac{\sigma^k}k(t-t^k)
=\sum_{k\ge2}\frac{\sigma^k}k(t-t^k)$, the $k=1$ term cancelling identically.
For $k\ge2$ and $t\in[0,1]$ the factorization
$t-t^k=t(1-t)(1+t+\cdots+t^{k-2})$ is exact and its bracket is at most $k-1$,
so $|t-t^k|\le(k-1)t(1-t)$ and
\[
|h_\sigma(t)|\ \le\ \sum_{k\ge2}\frac{\rho^k}k(k-1)\,t(1-t)
\ \le\ t(1-t)\sum_{k\ge2}\rho^k\ =\ \frac{\rho^2}{1-\rho}\,t(1-t).
\]
Summing over $n$ and using $\sum_n\lambda_n(1-\lambda_n)=\Tr(S_\ell-S_\ell^2)$
gives the display. For the consequence take $\rho=\frac12$, so
$\rho^2/(1-\rho)=\frac12$, and bound the three pieces of
$\mathsf{P}_\ell$: $\Tr(\varphi)\le\frac1{\pi^2}\ln\ell+0.47$ by
Proposition~\ref{prop:trace} and \eqref{eq:trbounds};
$\sup_{|\sigma|\le1/2}|\ln(1-\sigma)|=\ln2$, attained at $\sigma=\frac12$;
and $\mathsf{C}$ is continuous on the compact disc $|\sigma|\le\frac12$,
hence bounded, since $\mathsf{G}(1+z)$ vanishes only at $z=-1,-2,\dots$ while
$\sup_{|\sigma|\le1/2}|\mathsf{z}(\sigma)|=\ln2/(2\pi)=0.11031\ldots<1$.
\end{proof}

The point of Lemma~\ref{lem:crude} is that it imports nothing: it uses
$\Tr S_\ell=\ell$ and Proposition~\ref{prop:trace}, which is the $m=1$ case
of Proposition~\ref{lem:higher}, proved in Section~\ref{sec:tracestep}
from the escape identity independently of everything here.
\emph{The $m=1$ case bootstraps the extraction for every $k$}, and hence,
through Lemma~\ref{lem:harmonic}, every fixed $m$.

\begin{lemma}[Taylor extraction]\label{lem:extract}
Suppose \textup{(i)} $|\mathsf{E}_\ell(\sigma)|\le\delta_\ell$ for all real
$\sigma\in[-\frac14,\frac14]$ and all $\ell\ge\ell_1$, with $\delta_\ell\to0$;
and \textup{(ii)} $|\mathsf{E}_\ell(\sigma)|\le\mathsf{M}_\ell$ for all
complex $|\sigma|\le\frac12$ and all $\ell\ge\ell_1$. Then for each fixed
$k\ge1$,
\begin{equation}\label{eq:star}
\Tr(S_\ell^{\,k})=\ell-\frac{\mathsf{H}_{k-1}}{\pi^2}\ln\ell+\mathsf{c}_k
+\mathsf{R}_k(\ell),
\qquad
|\mathsf{R}_k(\ell)|\le k\bigl(\tfrac{16}3\bigr)^k
\delta_\ell^{\,\vartheta}\mathsf{M}_\ell^{\,1-\vartheta},
\end{equation}
where $\mathsf{c}_k:=-k\,[\sigma^k]\mathsf{C}(\sigma)$ and
\[
\vartheta:=1-\frac{\ln2}{\ln(2+\sqrt3)}=0.4736\ldots
\]
\end{lemma}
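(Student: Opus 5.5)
The plan is to read $\Tr(S_\ell^{\,k})$ off as a Taylor coefficient of $\ln\det(I-\sigma S_\ell)$, and to control the contribution of $\mathsf E_\ell$ by a two-constants (Hadamard three-circles) interpolation between hypotheses (i) and (ii).

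\emph{Step 1 (generating function).} For $|\sigma|<1$ the product $\det(I-\sigma S_\ell)=\prod_n(1-\sigma\lambda_n)$ has no zeros (Lemma~\ref{lem:crude}). Hence $\ln\det(I-\sigma S_\ell)$ is analytic there, and
\[
\ln\det(I-\sigma S_\ell)=-\sum_{k\ge1}\frac{\sigma^k}{k}\Tr(S_\ell^{\,k}),
\qquad\text{so}\qquad
\Tr(S_\ell^{\,k})=-k\,[\sigma^k]\ln\det(I-\sigma S_\ell).
\]
By \eqref{eq:detasym} this coefficient splits into the $\mathsf P_\ell$ part and the $\mathsf E_\ell$ part. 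Here I take $\mathsf C$ to be analytic on $|\sigma|<1$, since $\mathsf G(1\pm i\mathsf z)$ does not vanish there, exactly as in the proof of Lemma~\ref{lem:crude}. Then $\mathsf E_\ell=\ln\det-\mathsf P_\ell$ is analytic on $|\sigma|<1$, and in particular on $|\sigma|\le\frac12$.

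\emph{Step 2 (the explicit part).} Three coefficient computations are needed.
\begin{itemize}
\item From $\ell\ln(1-\sigma)=-\ell\sum_k\sigma^k/k$, the first term contributes exactly $\ell$.
\item The Cauchy product of $-\ln(1-\sigma)$ with itself gives $\ln^2(1-\sigma)=\sum_{k\ge2}\frac{2\mathsf H_{k-1}}{k}\sigma^k$. The second term therefore contributes $-k\cdot\frac{1}{2\pi^2}\cdot\frac{2\mathsf H_{k-1}}{k}\ln\ell=-\frac{\mathsf H_{k-1}}{\pi^2}\ln\ell$.
\item The term $\mathsf C$ contributes $\mathsf c_k$ by definition.
\end{itemize}
What remains is $\mathsf R_k(\ell)=-k\,[\sigma^k]\mathsf E_\ell$.

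\emph{Step 3 (interpolation, the main step).} Cauchy's estimate on the circle $|\sigma|=\frac3{16}$ gives
\[
|\mathsf R_k|\le k\bigl(\tfrac{16}{3}\bigr)^k\sup_{|\sigma|=3/16}|\mathsf E_\ell(\sigma)|,
\]
so it suffices to show $|\mathsf E_\ell|\le\delta_\ell^{\,\vartheta}\mathsf M_\ell^{\,1-\vartheta}$ on that circle.

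To do this, use the Joukowski map $\sigma=\frac18(w+w^{-1})$. It sends the annulus $1<|w|<R$ onto the exterior of the slit $[-\frac14,\frac14]$ inside the ellipse of semi-axes $\frac18(R\pm R^{-1})$.
\begin{itemize}
\item Choose $R=R_0=2+\sqrt3$, so that $\frac18(R_0+R_0^{-1})=\frac12$. This ellipse lies in $|\sigma|\le\frac12$, where (ii) gives $|\mathsf E_\ell|\le\mathsf M_\ell$.
\item On $|w|=1$, i.e.\ on the real segment, (i) gives $|\mathsf E_\ell|\le\delta_\ell$.
\end{itemize}
The function $\log|\mathsf E_\ell\circ J|$ is subharmonic on the annulus, where $J$ denotes the Joukowski map. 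By the three-circles theorem, on $|w|=r$ it is at most
\[
\Bigl(1-\frac{\ln r}{\ln R_0}\Bigr)\ln\delta_\ell+\frac{\ln r}{\ln R_0}\ln\mathsf M_\ell .
\]
Next take $R_2=2$. Its ellipse has minor semi-axis $\frac18(2-\frac12)=\frac3{16}$ and major semi-axis $\frac5{16}$, so the circle $|\sigma|=\frac3{16}$ lies inside it. Since $\delta_\ell\le\mathsf M_\ell$ (we may arrange this, or else the bound is trivial), the right-hand side increases in $r$. Hence points of the circle satisfy the bound with $r\le R_2$, and the exponent on $\delta_\ell$ is at least
\[
1-\frac{\ln2}{\ln(2+\sqrt3)}=\vartheta .
\]
Points of the circle lying on the slit itself are covered directly by (i).

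I expect this step to be the only delicate one, in two respects. First, the two-valuedness of $J^{-1}$ is harmless because $\mathsf E_\ell\circ J$ is single-valued on the annulus. Second, the monotonicity in $r$ needs $\delta_\ell\le\mathsf M_\ell$, which I would verify or else impose by replacing $\mathsf M_\ell$ with $\max(\mathsf M_\ell,\delta_\ell)$.
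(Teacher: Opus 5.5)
Your proposal is correct and follows the paper's proof essentially step by step: analyticity of $\mathsf E_\ell$, a two-constants bound on the Joukowski annulus with radii $2$ and $2+\sqrt3$, Cauchy's estimate on $|\sigma|=\tfrac3{16}$, and coefficient matching via $[\sigma^k]\ln^2(1-\sigma)=2\mathsf H_{k-1}/k$. You name the two radii the other way round from the paper, which sets $R_0=2$ and $R_2=2+\sqrt3$; this changes nothing. Your separate treatment of the case $\delta_\ell>\mathsf M_\ell$ supplies a monotonicity-in-$|w|$ step that the paper leaves implicit when it passes from $|w|=R_0$ to the whole of $E_{R_0}$.
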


\begin{proof}
\emph{Analyticity.} $\mathsf{E}_\ell$ is analytic on $|\sigma|<1$: the zeros
of $\det(I-\sigma S_\ell)$ sit at $\sigma=1/\lambda_n$, all of modulus $>1$;
the principal branch of each $\ln(1-\sigma\lambda_n)$ is valid and the sum
converges absolutely since $\sum\lambda_n=\ell<\infty$; and $\mathsf{C}$ is
analytic by the non-vanishing established in Lemma~\ref{lem:crude}.

\emph{Two constants.} The Joukowski map
$\sigma=\frac18(w+w^{-1})$ carries the annulus $1<|w|<R$ conformally onto the
interior of the ellipse $E_R$ with semi-axes
$a_R=\frac18(R+R^{-1})$ and $b_R=\frac18(R-R^{-1})$, minus the slit
$[-\frac14,\frac14]$. Two exact evaluations: $a_R=\frac12$ exactly at
$R_2=2+\sqrt3$ (solve $R+R^{-1}=4$), so $E_{R_2}$ is the largest such ellipse
inside the disc of hypothesis~(ii); and $b_R=\frac3{16}$ exactly at $R_0=2$,
so the circle $|\sigma|=\frac3{16}$ lies inside $E_{R_0}$. On an annulus the
harmonic measure of the inner boundary at $|w|=R_0$ is
$1-\ln R_0/\ln R_2$, which is $\vartheta$, evaluated in closed form rather
than estimated. Since $\ln|\mathsf{E}_\ell|$ is
subharmonic on the slit domain, at most $\ln\delta_\ell$ on the slit by~(i)
and at most $\ln\mathsf{M}_\ell$ on $\partial E_{R_2}$ by~(ii), the
two-constants theorem gives
$|\mathsf{E}_\ell|\le\delta_\ell^{\,\vartheta}\mathsf{M}_\ell^{\,1-\vartheta}$
on $E_{R_0}$, in particular on $|\sigma|=\frac3{16}$.

\emph{Cauchy.} Writing $\mathsf{E}_\ell(\sigma)=\sum_ke_k(\ell)\sigma^k$,
Cauchy's estimate on $|\sigma|=\frac3{16}$ gives
$|e_k(\ell)|\le(\frac{16}3)^k\delta_\ell^{\,\vartheta}
\mathsf{M}_\ell^{\,1-\vartheta}$.

\emph{Matching.} $\ln\det(I-\sigma S_\ell)=-\sum_{k\ge1}\sigma^k
\Tr(S_\ell^{\,k})/k$ and $[\sigma^k]\ln^2(1-\sigma)=2\mathsf{H}_{k-1}/k$, so
comparing coefficients in
$\mathsf{E}_\ell=\ln\det(I-\sigma S_\ell)-\mathsf{P}_\ell$ gives
$-k\,e_k(\ell)=\Tr(S_\ell^{\,k})
-[\ell-\frac{\mathsf{H}_{k-1}}{\pi^2}\ln\ell+\mathsf{c}_k]$, which is
\eqref{eq:star} with $\mathsf{R}_k=-k\,e_k$.
\end{proof}

Lemma~\ref{lem:crude} is hypothesis~(ii), with $\mathsf{M}_\ell=O(\ln\ell)$;
Theorem~\ref{thm:basorwidom} is hypothesis~(i), with
$\delta_\ell=O(\ln\ell/\ell)$. Substituting,
\begin{equation}\label{eq:starsub}
\Tr(S_\ell^{\,k})=\ell-\frac{\mathsf{H}_{k-1}}{\pi^2}\ln\ell+\mathsf{c}_k
+O\bigl(\ell^{-\vartheta+o(1)}\bigr),\qquad k\ge1 .
\end{equation}
The guaranteed rate is not tight: the two-constants step trades rate for
reach, converting a bound on a segment into a bound on a circle. Nothing that
follows needs more than $o(1)$.

The third lemma is pure algebra, with no analytic input at all. It is what
converts the extraction \eqref{eq:star}, whose logarithmic coefficients
are harmonic numbers, into the Beta values $\beta_m=B(m,m)$ that
Proposition~\ref{lem:higher} asserts, and it is the reason that proposition
is a theorem rather than a hypothesis.

\begin{lemma}[the alternating harmonic sum]\label{lem:harmonic}
For every integer $m\ge1$, with $\mathsf H_0=0$,
\begin{equation}\label{eq:harmid}
\sum_{j=0}^m(-1)^j\binom mj\mathsf H_{m+j-1}
\ =\ -B(m,m)\ =\ -\frac{((m-1)!)^2}{(2m-1)!}\,.
\end{equation}
\end{lemma}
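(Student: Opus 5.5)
The plan is to represent the harmonic numbers by an integral and let the alternating binomial sum collapse into a Beta integral. Use
\[
\mathsf H_n=\int_0^1\frac{1-x^n}{1-x}\,dx\qquad(n\ge0),
\]
which is valid at $n=0$ as well, since both sides vanish. Substituting $n=m+j-1$ and summing under the integral (a finite sum) gives
\[
\sum_{j=0}^m(-1)^j\binom mj\mathsf H_{m+j-1}
=\int_0^1\frac{1}{1-x}\sum_{j=0}^m(-1)^j\binom mj\bigl(1-x^{m+j-1}\bigr)\,dx .
\]

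The constant part contributes $\sum_j(-1)^j\binom mj=(1-1)^m=0$ because $m\ge1$. The power part is $-x^{m-1}\sum_j(-1)^j\binom mj x^j=-x^{m-1}(1-x)^m$. The integrand therefore becomes $-x^{m-1}(1-x)^{m-1}$, which is a polynomial, so there is no singularity at $x=1$ and the interchange needs no justification. Integrating gives $-\int_0^1x^{m-1}(1-x)^{m-1}dx=-B(m,m)=-((m-1)!)^2/(2m-1)!$, which is \eqref{eq:harmid}.

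The only point needing care is the bookkeeping at $m=1$, $j=0$, where the index is $\mathsf H_0=0$. The integral representation covers this case, since $1-x^0=0$. I would also check against the values recorded in Proposition~\ref{lem:higher}:
\begin{itemize}
\item at $m=1$, $\mathsf H_0-\mathsf H_1=-1=-\beta_1$;
\item at $m=2$, $\mathsf H_1-2\mathsf H_2+\mathsf H_3=1-3+\tfrac{11}6=-\tfrac16=-\beta_2$.
\end{itemize}
There is no real obstacle here; the identity is pure algebra once the integral representation is in place.
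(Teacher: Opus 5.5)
Your proof is correct and is essentially the paper's own argument: the same integral representation $\mathsf H_n=\int_0^1\frac{1-t^n}{1-t}\,dt$, the vanishing of the constant part via $(1-1)^m=0$, the binomial collapse of the power part to $-t^{m-1}(1-t)^m$, and the exact cancellation of one factor of $1-t$ that yields $-B(m,m)$. Your checks at $m=1$ and $m=2$ are also correct.
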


\begin{proof}
Use the integral representation
\[
\mathsf H_n=\int_0^1\frac{1-t^n}{1-t}\,dt ,
\]
valid for every integer $n\ge0$: the integrand is the polynomial
$1+t+\cdots+t^{n-1}$ for $n\ge1$ and is identically $0$ for $n=0$, so no
singularity is present and the integral is elementary. The sum in
\eqref{eq:harmid} is finite, so it may be taken under the integral sign:
\[
\sum_{j=0}^m(-1)^j\binom mj\mathsf H_{m+j-1}
=\int_0^1\frac{\sum_{j=0}^m(-1)^j\binom mj\bigl(1-t^{\,m+j-1}\bigr)}{1-t}\,dt .
\]
Because $m\ge1$ we have $\sum_{j=0}^m(-1)^j\binom mj=(1-1)^m=0$, so the
constant part of the numerator vanishes identically, and the binomial theorem
collapses what remains:
\[
\begin{aligned}
\sum_{j=0}^m(-1)^j\binom mj\bigl(1-t^{\,m+j-1}\bigr)
&=-\sum_{j=0}^m(-1)^j\binom mj t^{\,m+j-1}\\
&=-t^{\,m-1}\sum_{j=0}^m\binom mj(-t)^j
\ =\ -t^{\,m-1}(1-t)^m .
\end{aligned}
\]
The numerator is thus divisible by $1-t$ as a polynomial, so the division is
exact and cancels one factor with no limiting argument:
\[
\sum_{j=0}^m(-1)^j\binom mj\mathsf H_{m+j-1}
=-\int_0^1t^{\,m-1}(1-t)^{\,m-1}\,dt=-B(m,m),
\]
the integral being finite for every $m\ge1$. Finally
$B(m,m)=\Gamma(m)^2/\Gamma(2m)=((m-1)!)^2/(2m-1)!$.
\end{proof}

\noindent
The first three values are $-1$, $-\frac16$, $-\frac1{30}$, matching
$\beta_1=1$, $\beta_2=\frac16$, $\beta_3=\frac1{30}$; the $m=2$ and $m=3$
instances are the two alternating sums evaluated directly in the proof of
Lemma~\ref{lem:exacttrace} below, which \eqref{eq:harmid} therefore
subsumes.

\subsection{The closed forms}\label{app:closed}

\begin{lemma}[exact constant terms at $m=2,3$]\label{lem:exacttrace}
Assume Theorem~\ref{thm:basorwidom}. Then
\[
\Tr(\varphi^2)=\frac1{6\pi^2}\ln\ell+\mathsf{K}_2
+O\bigl(\ell^{-\vartheta+o(1)}\bigr),
\qquad
\Tr(\varphi^3)=\frac1{30\pi^2}\ln\ell+\mathsf{K}_3
+O\bigl(\ell^{-\vartheta+o(1)}\bigr),
\]
with
\[
\mathsf{K}_2=\frac{\mathsf{A}}{6\pi^2}+\frac{\zeta(3)}{4\pi^4},
\qquad
\mathsf{K}_3=\frac{\mathsf{A}}{30\pi^2}+\frac{\zeta(3)}{16\pi^4}
+\frac{\zeta(5)}{16\pi^6},
\]
both positive. The leading coefficients agree with
Proposition~\ref{lem:higher}: $\frac1{6\pi^2}=\frac{\beta_2}{\pi^2}$ and
$\frac1{30\pi^2}=\frac{\beta_3}{\pi^2}$.
\end{lemma}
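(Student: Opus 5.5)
The plan is to obtain both expansions from Lemma~\ref{lem:extract} at the fixed exponents $k=2,\dots,6$, exactly as in Step~2 of Proposition~\ref{lem:higher}. The new work is to evaluate the constants $\mathsf K_m=\sum_{j=0}^m(-1)^j\binom mj\mathsf c_{m+j}$ in closed form for $m=2,3$.

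By \eqref{eq:phicomb},
\[
\Tr(\varphi^2)=\Tr S_\ell^2-2\Tr S_\ell^3+\Tr S_\ell^4,\qquad
\Tr(\varphi^3)=\Tr S_\ell^3-3\Tr S_\ell^4+3\Tr S_\ell^5-\Tr S_\ell^6 .
\]
I substitute \eqref{eq:starsub} into each term. The terms in $\ell$ cancel. The $\ln\ell$ coefficients are $\beta_2/\pi^2=\frac1{6\pi^2}$ and $\beta_3/\pi^2=\frac1{30\pi^2}$ by Lemma~\ref{lem:harmonic}. The remainders are finitely many $O(\ell^{-\vartheta+o(1)})$ terms, using Lemma~\ref{lem:crude} for hypothesis~(ii) and Theorem~\ref{thm:basorwidom} for hypothesis~(i). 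So only the constants remain.

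For the constants I work with the linear functional
\[
\Phi_m[f]:=-\sum_{j=0}^m(-1)^j\binom mj\,(m+j)\,[\sigma^{m+j}]f,
\]
so that $\mathsf K_m=\Phi_m[\mathsf C]$. Next I expand $\mathsf C$ in powers of $\mathsf L:=\ln(1-\sigma)$, using the Barnes expansion
\[
\ln\mathsf G(1+z)=\tfrac z2\ln2\pi-\tfrac{z+(1+\gamma_{\mathrm E})z^2}{2}+\sum_{k\ge2}(-1)^k\zeta(k)\frac{z^{k+1}}{k+1},
\]
valid for $|z|<1$. This applies here because $|\mathsf z|<1$ on $|\sigma|\le\frac12$, as shown in Lemma~\ref{lem:crude}. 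In $\ln\mathsf G(1+i\mathsf z)+\ln\mathsf G(1-i\mathsf z)$ the odd powers cancel. With $\mathsf z^2=\mathsf L^2/(4\pi^2)$ this gives
\[
\mathsf C(\sigma)=\frac{\mathsf A}{2\pi^2}\mathsf L^2-\frac{\zeta(3)}{16\pi^4}\mathsf L^4+\frac{\zeta(5)}{96\pi^6}\mathsf L^6+O(\mathsf L^8).
\]
I will recheck the signs and the factors of $2$ at this step carefully.

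Since $\mathsf L^{2r}=O(\sigma^{2r})$ and $\Phi_m$ only reads the coefficients of $\sigma^k$ for $k\le2m$, the following holds. $\mathsf K_2$ sees only $\mathsf L^2$ and $\mathsf L^4$, while $\mathsf K_3$ sees $\mathsf L^2$, $\mathsf L^4$ and $\mathsf L^6$; this explains the shape of the stated formulas.

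The $\mathsf L^2$ contribution needs no new computation. Since $[\sigma^k]\mathsf L^2=2\mathsf H_{k-1}/k$, we get $\Phi_m[\mathsf L^2]=-2\sum_j(-1)^j\binom mj\mathsf H_{m+j-1}=2\beta_m$ by Lemma~\ref{lem:harmonic}. This yields the term $\mathsf A\beta_m/\pi^2$, that is $\mathsf A/(6\pi^2)$ and $\mathsf A/(30\pi^2)$.

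The $\mathsf L^4$ and $\mathsf L^6$ terms need their low Taylor coefficients. I use $k[\sigma^k]f=[\sigma^k]\sigma f'$ together with $\sigma\,\frac{d}{d\sigma}\mathsf L^{n}=-n\sigma\mathsf L^{n-1}/(1-\sigma)$. Concretely, $\mathsf L^4=\sigma^4+2\sigma^5+\frac{17}{6}\sigma^6+\cdots$ and $\mathsf L^6=\sigma^6+\cdots$. Then
\[
\Phi_2[\mathsf L^4]=-4\,[\sigma^4]\mathsf L^4=-4,\qquad
\Phi_3[\mathsf L^4]=-\bigl(-3\cdot4\cdot1+3\cdot5\cdot2-6\cdot\tfrac{17}6\bigr)=-1,\qquad
\Phi_3[\mathsf L^6]=6 .
\]
These give $\mathsf K_2=\frac{\mathsf A}{6\pi^2}+\frac{\zeta(3)}{4\pi^4}$ and $\mathsf K_3=\frac{\mathsf A}{30\pi^2}+\frac{\zeta(3)}{16\pi^4}+\frac{\zeta(5)}{16\pi^6}$, matching the statement. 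Positivity is then immediate since $\mathsf A>0$.

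The main obstacle is bookkeeping rather than analysis: getting the exact Barnes coefficients and the low Taylor coefficients of $\mathsf L^4,\mathsf L^6$ right. I would cross-check $\mathsf K_2$ numerically, or against the $\ln\ell$ coefficient pattern, before trusting $\mathsf K_3$.
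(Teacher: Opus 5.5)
Your proposal is correct and follows the paper's proof. Both arguments substitute Lemma~\ref{lem:extract} at $k=2,\dots,6$ into the binomial expansions of $\Tr(\varphi^2)$ and $\Tr(\varphi^3)$. Both then use the Barnes series, with the odd powers cancelling, to write $\mathsf C=\frac{\mathsf A}{2\pi^2}\mathsf L^2-\frac{\zeta(3)}{16\pi^4}\mathsf L^4+\frac{\zeta(5)}{96\pi^6}\mathsf L^6+\cdots$; this is the paper's $2\mathsf A\mathsf z^2-\zeta(3)\mathsf z^4+\tfrac23\zeta(5)\mathsf z^6$ rewritten in $\mathsf L$.

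The only difference is bookkeeping. You apply $\Phi_m$ power by power and take the $\mathsf A$-part from Lemma~\ref{lem:harmonic}, whereas the paper lists $\mathsf c_2,\dots,\mathsf c_6$ and combines them. Your values $\Phi_2[\mathsf L^4]=-4$, $\Phi_3[\mathsf L^4]=-1$ and $\Phi_3[\mathsf L^6]=6$ are correct.
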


\begin{proof}
From the Barnes expansion
$\ln\mathsf{G}(1+z)=\frac z2\ln2\pi-\frac{z(z+1)}2-\frac{\gamma_{\mathrm E}z^2}2
+\sum_{n\ge3}(-1)^{n-1}\frac{\zeta(n-1)z^n}n$, the $\ln2\pi$ terms and every
odd-$n$ term cancel between $+i\mathsf{z}$ and $-i\mathsf{z}$, leaving
$\ln\mathsf{G}(1+i\mathsf{z})+\ln\mathsf{G}(1-i\mathsf{z})
=(1+\gamma_{\mathrm E})\mathsf{z}^2
+\sum_{j\ge2}(-1)^{j-1}\zeta(2j-1)\mathsf{z}^{2j}/j$. The remaining piece of
$\mathsf{C}$ is $\frac{\ln^2(1-\sigma)}{2\pi^2}\ln2\pi
=2\mathsf{z}^2\ln2\pi$, since $\ln(1-\sigma)=-2\pi\mathsf{z}$; it merges with
the $1+\gamma_{\mathrm E}$, which is where $\mathsf{A}$ comes from. Hence
\[
\mathsf{C}(\sigma)=2\mathsf{A}\mathsf{z}^2-\zeta(3)\mathsf{z}^4
+\tfrac23\zeta(5)\mathsf{z}^6-\tfrac12\zeta(7)\mathsf{z}^8+\cdots
\]
Since $\mathsf{z}(\sigma)=O(\sigma)$, the $\mathsf{z}^8$ term contributes
nothing below $\sigma^8$: for $k\le6$ the $\zeta(7)$ term and beyond are
absent, not neglected. Extracting $\mathsf{c}_k=-k[\sigma^k]\mathsf{C}$,
\[
\begin{aligned}
\mathsf{c}_2&=-\frac{\mathsf{A}}{\pi^2}, &
\mathsf{c}_3&=-\frac{3\mathsf{A}}{2\pi^2}, &
\mathsf{c}_4&=-\frac{11\mathsf{A}}{6\pi^2}+\frac{\zeta(3)}{4\pi^4},\\
\mathsf{c}_5&=-\frac{25\mathsf{A}}{12\pi^2}+\frac{5\zeta(3)}{8\pi^4}, &
\mathsf{c}_6&=-\frac{137\mathsf{A}}{60\pi^2}+\frac{17\zeta(3)}{16\pi^4}
-\frac{\zeta(5)}{16\pi^6}. & &
\end{aligned}
\]
Now $\varphi^m=S_\ell^{\,m}(I-S_\ell)^m$ gives the exact finite expansions
$\Tr(\varphi^2)=\Tr(S_\ell^2)-2\Tr(S_\ell^3)+\Tr(S_\ell^4)$ and
$\Tr(\varphi^3)=\Tr(S_\ell^3)-3\Tr(S_\ell^4)+3\Tr(S_\ell^5)-\Tr(S_\ell^6)$.
The coefficients sum to zero in both, so the terms linear in $\ell$ cancel.
For the logarithms,
$-\frac1{\pi^2}(\mathsf{H}_1-2\mathsf{H}_2+\mathsf{H}_3)
=-\frac1{\pi^2}\cdot(-\frac16)=\frac1{6\pi^2}$ and
$-\frac1{\pi^2}(\mathsf{H}_2-3\mathsf{H}_3+3\mathsf{H}_4-\mathsf{H}_5)
=-\frac1{\pi^2}\cdot(-\frac1{30})=\frac1{30\pi^2}$. For the constants,
$\mathsf{c}_2-2\mathsf{c}_3+\mathsf{c}_4=\mathsf{K}_2$ and
$\mathsf{c}_3-3\mathsf{c}_4+3\mathsf{c}_5-\mathsf{c}_6=\mathsf{K}_3$, by the
same arithmetic on the coefficients $\mathsf{c}_k$ above. Each remainder is a fixed finite
combination of the $\mathsf{R}_k$ of \eqref{eq:starsub}, $k\le6$.
\end{proof}

\begin{corollary}[the moment combination, with its constant]\label{cor:Bconst}
Assume Theorem~\ref{thm:basorwidom}. With
$\mathsf{B}:=256\mathsf{K}_3-48\mathsf{K}_2$,
\[
256\Tr(\varphi^3)-48\Tr(\varphi^2)
=\frac8{15\pi^2}\ln\ell+\mathsf{B}+O\bigl(\ell^{-\vartheta+o(1)}\bigr),
\]
\[
\mathsf{B}=\frac{8\mathsf{A}}{15\pi^2}+\frac{4\zeta(3)}{\pi^4}
+\frac{16\zeta(5)}{\pi^6}\ >\ \frac{27}{125}\ >\ 0 .
\]
\end{corollary}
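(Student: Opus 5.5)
The plan is to combine the two closed-form traces of Lemma~\ref{lem:exacttrace} linearly and then bound the resulting constant by elementary numerics.

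\emph{The asymptotic identity.} Multiply the expansion of $\Tr(\varphi^3)$ by $256$ and that of $\Tr(\varphi^2)$ by $48$, and subtract. The logarithmic coefficient is
\[
\frac{256}{30\pi^2}-\frac{48}{6\pi^2}=\frac{8}{15\pi^2},
\]
as already computed in the proof of Theorem~\ref{thm:window}. The remainder is a fixed finite combination of two $O(\ell^{-\vartheta+o(1)})$ terms, so it is again of that order. The constant term is $\mathsf B=256\mathsf K_3-48\mathsf K_2$ by definition.

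\emph{The closed form of $\mathsf B$.} Substitute the two constants of Lemma~\ref{lem:exacttrace} and collect terms:
\begin{itemize}
\item the $\mathsf A$ terms give $\bigl(\frac{256}{30}-\frac{48}{6}\bigr)\frac{\mathsf A}{\pi^2}=\frac{8\mathsf A}{15\pi^2}$;
\item the $\zeta(3)$ terms give $\bigl(\frac{256}{16}-\frac{48}{4}\bigr)\frac{\zeta(3)}{\pi^4}=\frac{4\zeta(3)}{\pi^4}$;
\item the $\zeta(5)$ term gives $\frac{256}{16}\cdot\frac{\zeta(5)}{\pi^6}=\frac{16\zeta(5)}{\pi^6}$.
\end{itemize}
This reproduces the displayed formula exactly. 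The only care needed is to track signs. Both $\zeta$-contributions come out positive, so no cancellation has to be controlled.

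\emph{The lower bound.} Every term is positive, because $\mathsf A=1+\gamma_{\mathrm E}+\ln2\pi>0$ and $\zeta(3),\zeta(5)>1$. It therefore suffices to bound each term from below with outward-safe rational values. I would use $\mathsf A>3.415$, since $\gamma_{\mathrm E}+\ln2\pi=2.4151\ldots$ as recorded in the proof of Proposition~\ref{prop:trace}, together with $\pi^2<9.87$, $\zeta(3)>1.2$ and $\zeta(5)>1.03$. This gives:
\begin{itemize}
\item $\frac{8\mathsf A}{15\pi^2}>\frac{8\cdot3.415}{15\cdot9.87}>0.184$;
\item $\frac{4\zeta(3)}{\pi^4}>\frac{4.8}{97.5}>0.049$;
\item $\frac{16\zeta(5)}{\pi^6}>\frac{16.48}{962}>0.017$.
\end{itemize}
The sum exceeds $0.25>0.216=\frac{27}{125}$.

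The main obstacle is bookkeeping rather than analysis. The substantive analytic input, namely the remainder and the values of $\mathsf K_2,\mathsf K_3$, is already supplied by Lemma~\ref{lem:exacttrace}. What needs care is checking the collected coefficients and rounding every numerical constant in the safe direction. The margin of roughly $0.03$ over $\frac{27}{125}$ is comfortable, so crude bounds on $\pi$ and on $\zeta$ suffice.
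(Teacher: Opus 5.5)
Your proof is correct and follows the paper's route: you combine the two expansions of Lemma~\ref{lem:exacttrace}, collect the $\mathsf A$, $\zeta(3)$ and $\zeta(5)$ coefficients, and bound the three positive terms from below. The difference is only numerical. The paper uses crude rational bounds, $\mathsf A>3$ (from $\gamma_{\mathrm E}>\frac12$ and $\ln2\pi>\frac32$), $\zeta(3),\zeta(5)>1$ and $\pi<\frac{63}{20}$, which sum to exactly $\frac{27}{125}$. Your sharper decimals give $0.25$ instead, but they need $\mathsf A>3.415$, which holds only with a margin of about $10^{-4}$. The paper's value ``$2.4151\ldots$'' is rounded (the true value is $2.41509\ldots$), so that bound is better justified directly from $\gamma_{\mathrm E}>0.5772$ and $\ln2\pi>1.8378$.
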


\begin{proof}
The leading coefficient is
$256\beta_3-48\beta_2=\frac{256}{30}-\frac{48}6=\frac{128}{15}-8=\frac8{15}$
by Lemma~\ref{lem:exacttrace}, and the constant is
$256\mathsf{K}_3-48\mathsf{K}_2$ by the same lemma; collecting
$\mathsf{A}$, $\zeta(3)$ and $\zeta(5)$ gives the displayed closed form.
Positivity follows from exact rational estimates. The
three coefficients are strictly positive. Further
$\gamma_{\mathrm E}>\frac12$ and $\ln2\pi>\frac32$ (because
$2\pi>6>e^{3/2}=4.4816\ldots$), so $\mathsf{A}>3$; $\zeta(3),\zeta(5)>1$; and
$\pi<\frac{63}{20}$, whence $\pi^2<10$, $\pi^4<100$, $\pi^6<1000$. Therefore
\[
\mathsf{B}\ >\ \frac{8\cdot3}{15\cdot10}+\frac4{100}+\frac{16}{1000}
\ =\ \frac4{25}+\frac1{25}+\frac2{125}\ =\ \frac{27}{125}.
\]
\end{proof}

\begin{corollary}[window count at the full $c_0$, for every
$\ell>0$]\label{cor:windowexact}
Assume Theorem~\ref{thm:basorwidom}. There is $\ell_0\ge1$ such that, with
$C_0:=c_0\ln\ell_0$ and $c_0=\frac8{15\pi^2}$,
\[
M_a=\#\{n:\lambda_n(\ell)\in(\tfrac14,\tfrac34)\}\ \ge\ c_0\ln\ell-C_0
\qquad\text{for every }\ell>0,
\]
and moreover $M_a\ge c_0\ln\ell+\mathsf{B}/2$ for every $\ell\ge\ell_0$.
\end{corollary}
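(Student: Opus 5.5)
The plan is to combine the unconditional count inequality \eqref{eq:countineq} with the asymptotics of Corollary~\ref{cor:Bconst}, and then to absorb the non-asymptotic range into the constant $C_0$. Proposition~\ref{prop:count} gives, for every $\ell>0$,
\[
M_a\ \ge\ 256\Tr(\varphi^3)-48\Tr(\varphi^2).
\]
Corollary~\ref{cor:Bconst} identifies the right side as $c_0\ln\ell+\mathsf B+\mathsf E(\ell)$, with $\mathsf E(\ell)=O(\ell^{-\vartheta+o(1)})$. In particular $\mathsf E(\ell)\to0$ as $\ell\to\infty$, since $\vartheta=0.4736\ldots>0$.

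Since $\mathsf B>\frac{27}{125}>0$, choose $\ell_0\ge1$ so large that $|\mathsf E(\ell)|\le\mathsf B/2$ for all $\ell\ge\ell_0$. Such an $\ell_0$ exists because $\mathsf E\to0$. For $\ell\ge\ell_0$ we then get
\[
M_a\ \ge\ c_0\ln\ell+\mathsf B/2,
\]
which is the second assertion. It also gives the first assertion on that range, since $c_0\ln\ell+\mathsf B/2\ge c_0\ln\ell\ge c_0\ln\ell-C_0$, using $C_0=c_0\ln\ell_0\ge0$.

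For $0<\ell<\ell_0$, the right side satisfies $c_0\ln\ell-C_0=c_0\ln(\ell/\ell_0)<0$. Since $M_a$ is a nonnegative integer, the inequality holds trivially there, and no asymptotic input is needed on that range. Together the two ranges cover every $\ell>0$ with the same coefficient $c_0$.

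The only delicate point is the existence of $\ell_0$, that is, that the remainder in Corollary~\ref{cor:Bconst} really is $o(1)$ uniformly in $\ell\ge\ell_1$. This traces back to Lemma~\ref{lem:extract}, applied at the finitely many exponents $k\le6$. Hypothesis~(ii) there is supplied by Lemma~\ref{lem:crude}, with $\mathsf M_\ell=O(\ln\ell)$. Hypothesis~(i) is Theorem~\ref{thm:basorwidom}, with $\delta_\ell=O(\ln\ell/\ell)$ uniformly on $[-\frac14,\frac14]$. Combining them gives $\delta_\ell^{\vartheta}\mathsf M_\ell^{1-\vartheta}\to0$, so $\mathsf E\to0$. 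Because the implied constant in Theorem~\ref{thm:basorwidom} is not quantified, $\ell_0$, and hence $C_0$, is existential rather than effective. This is why $C_0$ is not effective, as stated after Theorem~\ref{thm:window}.
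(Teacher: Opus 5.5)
Your proposal is correct and follows the paper's proof step for step. It combines the unconditional count inequality of Proposition~\ref{prop:count} with the asymptotic of Corollary~\ref{cor:Bconst}, chooses a threshold $\ell_0\ge1$ beyond which the remainder is at most $\mathsf{B}/2$ (using $\mathsf{B}>0$), and uses the trivial bound $M_a\ge0>c_0\ln(\ell/\ell_0)$ on $0<\ell<\ell_0$; your tracing of why the remainder tends to zero (Lemma~\ref{lem:extract} at $k\le6$, with $\delta_\ell^{\vartheta}\mathsf{M}_\ell^{1-\vartheta}\to0$) and of why $C_0$ is not effective matches the paper's own discussion.
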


\begin{proof}
Proposition~\ref{prop:count} gives $M_a\ge256\Tr(\varphi^3)-48\Tr(\varphi^2)$
unconditionally, and Corollary~\ref{cor:Bconst} writes the right side as
$c_0\ln\ell+\mathsf{B}+\mathsf{R}(\ell)$ with $\mathsf{R}(\ell)\to0$. Choose
$\ell_0\ge1$ beyond which $|\mathsf{R}(\ell)|\le\mathsf{B}/2$ (possible
because $\mathsf{B}>0$ and $\mathsf{R}\to0$), and put
$C_0:=c_0\ln\ell_0\ge0$. For $\ell\ge\ell_0$,
$M_a\ge c_0\ln\ell+\mathsf{B}-\mathsf{B}/2\ge c_0\ln\ell\ge c_0\ln\ell-C_0$.
For $0<\ell<\ell_0$, $c_0\ln\ell-C_0=c_0\ln(\ell/\ell_0)<0\le M_a$, since
$M_a$ is a nonnegative integer. The two branches cover $(0,\infty)$.
\end{proof}

\noindent
The coefficient survives the passage to all $\ell$, and that is the point. A
route that had to shave a positive amount off $c_0$ to buy room in the
remainder would deliver less than $c_0$; here $\mathsf{B}>0$ pays for that
room instead, so $c_0=\frac8{15\pi^2}$ passes through intact. This says
something about the passage from the asymptotic to the all-$\ell$ form and
nothing more. In particular it does not say that $c_0$ is the best constant
available for the window $(\frac14,\frac34)$; Remark~\ref{rem:c0status}
records that it is not.

The threshold $\ell_0$, on the other hand, is not pinned down. It depends on
the implied constant in the $O(\ln s/s)$ of Theorem~\ref{thm:basorwidom},
which \cite{Charlier21} states to be uniform but does not quantify. So $c_0$
and $\mathsf{B}$ are exact and explicit, while $C_0$ exists and is finite but
is \emph{not effective}. Theorem~\ref{thm:window} is worded accordingly.

\bibliographystyle{plain}
\bibliography{bibliography}

\end{document}